\def\Om{\Omega}
\def\E{\mathcal{E}}
\def\R{\mathbb{R}}
\def\ep{\varepsilon}
\numberwithin{equation}{section}
\numberwithin{figure}{section}
\newcommand{\FF}{{\text{$\sf{E}$}}}
\newcommand{\nn}{{\text{$\sf{n}$}}}
\newcommand{\epp}{\epsilon}
\newcommand{\cn}{\operatorname{cn}}
\newcommand{\sn}{\operatorname{sn}}
\newcommand{\cotan}{\operatorname{cotan}}
\newcommand{\diam}{\operatorname{diam}}
\renewcommand{\leq}{\leqslant}
\renewcommand{\le}{\leqslant}
\renewcommand{\geq}{\geqslant}
\renewcommand{\ge}{\geqslant}
\newtheorem{Theorem}{Theorem}[section]
\newtheorem{Corollary}[Theorem]{Corollary}
\newtheorem{Proposition}[Theorem]{Proposition}
\newtheorem{Lemma}[Theorem]{Lemma}
\newtheorem{Remark}[Theorem]{Remark}
\title{Elastic energy of a convex body}
\author{Chiara Bianchini, Antoine Henrot, Tak\'eo Takahashi}
\date{\today}
\begin{document}
\maketitle

\begin{abstract}
In this paper a Blaschke-Santal\'o diagram involving the area, the perimeter and the elastic energy of planar convex bodies is considered.
More precisely we give a description of set 
$$\mathcal{E}:=\left\{(x,y)\in \R^2, x=\frac{4\pi A(\Omega)}{P(\Omega)^2},
y=\frac{E(\Omega)P(\Omega)}{2\pi^2},\,\Omega\mbox{ convex} \right\},
$$ where $A$ is the area, $P$ is the perimeter and $E$ is the elastic energy, that is a Willmore type energy in the plane.
In order to do this, we investigate the following shape optimization problem:
$$
\min_{\Omega\in\mathcal{C}}\{E(\Omega)+\mu A(\Omega)\},
$$ where $\mathcal{C}$ is the class of convex bodies with fixed perimeter and $\mu\ge 0$ is a parameter.
Existence, regularity and geometric properties of solutions to this minimum problem are shown.
\end{abstract}

{\emph Key words:} Elastic energy, Willmore type energy, convex geometry, Blaschke diagram,  shape optimization.

{\emph Subject classification:} {primary: 52A40; secondary: 49Q10, 52A10}

\tableofcontents

\section{Introduction}
For a regular planar convex body $\Omega$, that is a planar convex compact set, we introduce the three geometric
quantities $A(\Omega)$, $P(\Omega)$, $E(\Omega)$ where $A(\Omega)$ is the area, $P(\Omega)$ is the perimeter and
$E(\Omega)$ is the elastic energy defined by
$$
E(\Omega)=\displaystyle \frac 12 \int_{\partial \Omega} k^2(s) \ ds
$$
where $k$ is the curvature and $s$ is the arc length.
The elastic energy of a curve seems to have been introduced by L. Euler in 1744 who
studied the {\it elasticae}. These curves are critical points of the elastic energy
which satisfy some boundary conditions. This question has been widely studied and has many
applications in geometry, in kinematics (the ball-plate problem), in numerical analysis
(non-linear splines), in computer vision (reconstruction of occluded edges) etc.
For a good overview and historical presentation, we refer e.g. to \cite{Sat}.

The aim of this paper is to study the links between $E(\Omega)$, $A(\Omega)$ and $P(\Omega)$.
This can be done by investigating the set of points in $\R^3$ corresponding to the triplet
$(A(\Omega),E(\Omega),P(\Omega))$ or a planar scale invariant version as
$(A(\Omega)/P(\Omega)^2, E(\Omega)P(\Omega))$. The first one who studied the diagram of these points is
probably W. Blaschke in \cite{bla} where the three quantities in consideration were
the volume, the surface area and the integral of the mean curvature of a three-dimensional
convex body. Later on, L. Santal\'o in \cite{san} proposed a systematic study of this kind
of diagrams for planar convex body and geometric quantities like the area, the perimeter, the
diameter, the minimal width, the inradius and the circumradius. From that time, this kind
of diagram is often called {\it Blaschke-Santal\'o diagram}.

Our aim is to study the following Blaschke-Santal\'o diagram involving area, elastic
energy and perimeter:
\begin{equation}\label{setE}
\mathcal{E}:=\left\{(x,y)\in \R^2, x=\frac{4\pi A(\Omega)}{P(\Omega)^2},
y=\frac{E(\Omega)P(\Omega)}{2\pi^2},\,\Omega\mbox{ convex} \right\}.
\end{equation}
This will be done in Section \ref{secE}.

For this analysis, we recall an important geometric inequality due to Gage in \cite{Gage}: 
\begin{Theorem}[Gage]\label{main-th1}
 For any planar convex body of class $C^1$ and piecewise $C^2$, the following inequality holds
 \begin{equation}
   \label{main_ineq}
   \frac{E(\Omega) A(\Omega)}{P(\Omega)} \geq \frac{\pi}{2}
 \end{equation}
with equality if and only if $\Omega$ is a disk.
\end{Theorem}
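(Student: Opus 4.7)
The plan is to parametrize $\partial\Omega$ by the angle $\theta\in[0,2\pi]$ of the outer unit normal, which is possible because $\Omega$ is convex. Fix an interior origin and let $h(\theta)$ be the support function of $\Omega$ with radius of curvature $\rho(\theta):=h(\theta)+h''(\theta)\geq 0$; then $ds=\rho\,d\theta$ and $k=1/\rho$. Three basic identities follow directly:
\[
P(\Omega)=\int_0^{2\pi}h\,d\theta=\int_{\partial\Omega}kh\,ds,\qquad 2A(\Omega)=\int_0^{2\pi}h\rho\,d\theta=\int_{\partial\Omega}h\,ds,\qquad 2E(\Omega)=\int_{\partial\Omega}k^2\,ds.
\]

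The key step would then be to apply the Cauchy--Schwarz inequality to the factorization $P=\int_{\partial\Omega}k\cdot h\,ds$, obtaining
\[
P(\Omega)^2 \;\leq\; 2E(\Omega)\int_{\partial\Omega}h^2\,ds.
\]
Combined with the target inequality $E(\Omega)A(\Omega)\geq \pi P(\Omega)/2$, this reduces the problem to the auxiliary estimate
\[
\int_{\partial\Omega}h^2\,ds\;\leq\; \frac{P(\Omega)\,A(\Omega)}{\pi},
\]
which becomes an equality on disks and therefore captures the sharp constant.

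The hard part is proving this auxiliary inequality. My plan is to first translate $\Omega$ so that the origin coincides with its Steiner point, thereby killing the first Fourier modes of $h$. Writing
\[
h(\theta)=\frac{a_0}{2}+\sum_{n\geq 2}(a_n\cos n\theta+b_n\sin n\theta),
\]
the three functionals and the quantity $\int h^2\rho\,d\theta$ all become polynomial expressions in $(a_n,b_n)$, with $P=\pi a_0$ and $2A=\tfrac{\pi a_0^2}{2}-\pi\sum_{n\geq 2}(n^2-1)(a_n^2+b_n^2)$. A second-order perturbation analysis around the disk already confirms the correct direction and the sharp constant $\pi$, the contribution of the $n$-th mode being of order $(2-n^2)\leq -2$. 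The global inequality should then follow by combining Wirtinger-type bounds (benefiting from the spectral gap $n\geq 2$ enforced by the Steiner normalization) with the convexity constraint $\rho\geq 0$, used to control the remaining cubic cross-terms in the Fourier expansion.

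For the equality case: equality in the Cauchy--Schwarz step forces $k$ proportional to $h$, while equality in the auxiliary estimate forces all higher Fourier modes of $h$ to vanish. Together these give $h\equiv\text{const}$, hence $\Omega$ a disk.
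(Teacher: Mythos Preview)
The paper does not prove this theorem; it is quoted from Gage's original article \cite{Gage} and used as an input throughout. So there is no ``paper's own proof'' to compare against, only Gage's.

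Your reduction is in fact exactly Gage's strategy: applying Cauchy--Schwarz to $P=\int_{\partial\Omega} kh\,ds$ reduces \eqref{main_ineq} to the auxiliary estimate
\[
\int_{\partial\Omega} h^2\,ds \;\le\; \frac{P(\Omega)A(\Omega)}{\pi},
\]
and this inequality is precisely the content of Gage's paper. So the architecture of your proposal is correct and standard.

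The gap is in your plan for the auxiliary inequality. Passing to Fourier coefficients with the Steiner normalization, the left-hand side $\int_0^{2\pi} h^2\rho\,d\theta=\int_0^{2\pi}(h^3+h^2 h'')\,d\theta$ is genuinely cubic in the coefficients $(a_n,b_n)_{n\ge 2}$, while $PA/\pi$ is quadratic. A second-order expansion about the disk confirms the inequality locally, as you note, but the global statement does not follow from a Wirtinger/spectral-gap argument alone: such arguments control quadratic forms, not the trilinear convolution sums arising from $\int h^3\,d\theta$. Saying that the convexity constraint $\rho\ge 0$ will ``control the remaining cubic cross-terms'' is exactly where the real work is, and your proposal gives no mechanism for this. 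Gage's own argument for this step is not a Fourier computation; it is a geometric comparison (using the width/breadth of $\Omega$ and an integral-geometric estimate) that has no obvious translation into your Fourier framework. Unless you can supply a concrete bound on the cubic terms---for instance by exhibiting an explicit positive combination of $\rho\ge 0$ against suitable test functions---the proof is incomplete at its most difficult point.
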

In other words, the disk minimizes the product $E(\Omega) A(\Omega)$
among convex bodies with given perimeter. More general inequalities involving different functions of the curvature,
area and perimeter have been proved in \cite{Gre-Osh}.
Notice that Gage's result implies that the
points $(x,y)$ in $\mathcal{E}$ satisfy the inequality $xy\geq 1$. 
In order to describe the diagram  $\mathcal{E}$ we need additional relations which lead us
 to consider the following minimization problem:
\begin{equation}
  \label{pb2}
  \min_{\Omega \in \mathcal{C}} \left(E(\Omega)+\mu A(\Omega)\right),
\end{equation}
where $\mu\geq 0$ is a parameter and  $\mathcal{C}$ is the class of regular planar convex bodies $\Omega$ such that
$P(\Omega)=P_0$.
We stress that there is a competition between the two terms since the disk minimizes $E(\Omega)$, see below (\ref{minper})
while it maximizes $A(\Omega)$ by the isoperimetric inequality.
Thus we can expect that the penalization parameter $\mu$ plays an important role
and that the solution is close to the disk when $\mu$ is small while it is close to the segment when $\mu$ is large. 
More precisely, we will present several results
in Section \ref{secdisk}. Our objective is to
describe the  boundary of the set $\mathcal{E}$ defined in (\ref{setE}) by solving this minimization problem.

Before tackling this minimization problem, let us make some observations
about the minimization of the elastic energy $\min\{E(\Omega)\}$.
Without any constraint this problem has no solution. Indeed if we consider
a disk $D_r$ of radius $r$, the curvature $k$ is constant equal to $1/r$ so that
$$
\int_{\partial D_r} k^2 \ ds = \frac{2\pi}{r} \to 0 \quad
\text{as} \ r\to \infty.
$$
Now, if we add a constraint of perimeter $P(\Omega)=P_0$, and if we consider that $\Omega$
is a bounded simply connected domain then by using the Cauchy-Schwarz
inequality we deduce
\begin{equation}\label{minper}
2\pi= \int_{\partial \Omega} k \ ds
\leq \left(\int_{\partial \Omega} k^2 \ ds \right)^{1/2} \left(P(\Omega)\right)^{1/2}
\end{equation}
with equality only in the case of a disk. Thus the disk solves
\begin{equation}\label{pb3}
\min \{E(\Omega), P(\Omega)=P_0\}
\end{equation}
among simply connected domains.
Let us remark that the equality constraint $P(\Omega)=P_0$ can be replaced by
an inequality $P(\Omega)\leq P_0$ since $E(t\Omega)=E(\Omega)/t$.
Moreover, if $\Omega$ is not simply connected, the result still holds true since removing
extra parts of the boundary makes the perimeter and the elastic energy lower.

Now if we consider the minimization of $E(\Omega)$ with a constraint on the 
area, there is no minimum. Indeed we can
take the annulus of radii $r$ and $r+\delta_r$ so that the area
constraint is satisfied, then
$$
\int_{\partial \Omega} k^2 \ ds  = 2\pi
\left(\frac{1}{r}+\frac{1}{r+\delta_r}\right) \to 0\quad
\text{as} \ r\to \infty.
$$
On the other hand, this problem  has a solution in the class of convex bodies. This is an easy consequence of Gage's inequality 
together with the isoperimetric inequality:
the disk is the unique minimizer of
the elastic energy under a constraint of area among convex bodies.

To our knowledge, the question to look for a minimizer for the elastic energy among
simply connected sets of given area remains open. Let us also mention
some related works. In \cite{Sat2}, Yu. L. Sachkov
 studies the ``closed elasticae'', that is the closed curves which are
stationary points of the elastic energy. He obtains only two possible curves:
the disk or the ``eight elasticae'' which is a local minimum. His method relies
on optimal control theory and Pontryagin Maximum Principle. The problem of minimizing $E(\Omega)$ among sets with given
$P(\Omega)$ and $A(\Omega)$ has also been studied. Indeed, this problem is related to the modelling
of vesicles which attracts much attention recently. For a study of critical
points of the functional and some numerical results, we refer to \cite{bir}.

\smallskip
Let us mention that this kind of problem has a natural extension in higher
dimension, the elastic energy being replaced by the Willmore functional.
This one being scale invariant, the nature of the problem is different.
For a physical point of view, it is a much more realistic model for vesicles.
For example, the problem of minimizing the Willmore functional (or the Helfrich functional
which is very similar) among three-dimensional sets, with constraints on the
volume and the surface area, is a widely studied problem.

\medskip
The plan of the paper is as follows: in Sections \ref{secex}, \ref{secgeom}, \ref{secdisk}
we study the minimization problem \eqref{pb2}. First existence and $C^2$ regularity of a minimizer
is proved, then some geometric properties are given: symmetry, possibilities of segments
on the boundary and the case of the disk is investigated (for what values of $\mu$ is
it solution or not). In the two last sections, the Blaschke-Santal\'o diagram of
the set $\mathcal{E}$ is investigated, first from a theoretical point of view in Section
\ref{secE} and then from a numerical point of view in Section \ref{secnumeric}.

\subsection{Notations}
For points $M,Q$ in the plane, we indicate by $\overrightarrow{QM}$ the planar vector joining these two points and we denote by $\|\cdot\|$ the Euclidean norm in $R^{N}$.

For an integer $p\geq 1$ and a real number $q\geq 1$, the Sobolev space $W^{p,q}(a,b)$ is the subset of functions $f$ in $L^q(a,b)$ such that the function $f$ and its weak derivatives up to the $p-th$  order belong to the space $L^q(a,b)$.
By $W_0^{p,q}(a,b)$ we indicate the closure in $W^{p,q}(a,b)$ of the infinitely differentiable functions compactly supported in $(a,b)$.
We indicate by $\langle\cdot,\cdot\rangle_{L^2(a,b)}$ the scalar products in the Hilbert space $L^2(a,b)$ and by $\|\cdot\|_{L^2(a,b)}$ its operator norm.

\section{Existence and regularity}\label{secex}
\subsection{Existence}
We recall that $A$, $P$ and $E$ can be expressed in different ways depending on which
parametrization is considered.
Indeed choosing the arc length $s$ parametrization, the area and the
elastic energy can be written in terms of the angle $\theta(s)$ (angle between
the tangent and the horizontal axis) in the following way:
\begin{equation}\label{paramtheta}
E(\Omega)=\frac 12 \int_0^P {\theta'}^2(s)\,ds \qquad A(\Omega)=\int\int_T \cos(\theta(u))\sin(\theta(s))\,
du\,ds
\end{equation}
where $T$ is the triangle $T=\{(u,s)\in \mathbb{R}^2 \ ; \ 0\leq u\leq s\leq P\}$. In this case, we recall that
\begin{equation}\label{bht5.1}
\partial \Omega=\Big\{ (x(s),y(s)), \quad s\in [0,P] \Big\},
\end{equation}
and
\begin{equation}\label{eq11}
x'(s)=\cos\theta(s), \ y'(s)=\sin \theta(s).
\end{equation}
The convexity of the set $\Omega$ is expressed by the fact that the function
$s\mapsto \theta(s)$ is non-decreasing. Notice that expression \eqref{paramtheta} for the
elastic energy leads us to impose the following regularity condition on (the boundary of) the convex set
$\Omega$; that is  the function $\theta(s)$ has to belong to the Sobolev space $W^{1,2}(0,P)$ 
Let us remark that if $\theta$ is
given, we recover the boundary of the convex set by integrating $\cos\theta$ and $\sin\theta$.

Hence let us consider the following class of convex sets:
\begin{equation}\label{bht5.0}
\mathcal{C}:=\left\{ \Omega \subset \mathbb{R}^2 \ \text{bounded and open set such that \eqref{bht5.1} and \eqref{eq11} hold and}\ \theta \in \mathcal{M}\right\}, 
\end{equation}
where
\begin{equation}\label{bht5.2}
\mathcal{M}:=\Bigg\{\theta \in W^{1,2}(0,P) \ ; \ \theta(0)+2\pi=\theta(P),\; \theta'\geq 0 \; a.e., \quad
\int_0^{P} \cos(\theta(s)) \ ds=\int_0^{P} \sin(\theta(s)) \ ds=0\Bigg\}.
\end{equation}

\medskip
On the other hand, choosing the parametrization of the convex set by its support function $h(t)$
$(t\in [0,2\pi])$ and its
radius of curvature $\phi=h''+h \geq 0$, with $\phi=1/k$, we have
\begin{equation}\label{paramphi}
P(\Omega)=\int_0^{2\pi} h(t)dt=\int_0^{2\pi} \phi(t)dt,\quad
A(\Omega)=\frac 12 \int_0^{2\pi} h(t)\phi(t)\,dt, \quad
E(\Omega)=\frac 12 \int_0^{2\pi} \frac{1}{\phi(t)}\,dt ;
\end{equation}
this last expression being valid as soon as $\Omega$ is $C^2_+$ meaning that the
radius of curvature is a positive continuous function and the fact that $ds=\phi(t)\,dt$ (where $s$ is the curvilinear abscissa). 

\begin{Remark}
We underline that if the domain $\Omega$ is not strictly convex or not of class $C^2$, it is well known that its convexity
is just expressed by the fact that $h''+h$ is a non-negative measure.
This is actually a consequence of the Minkowski existence Theorem, see \cite[Section 7.1]{schneider}. 

In this general case the expression of $E(\Omega)$ in \eqref{paramphi} is no longer valid. For more results and properties
of the support function, we refer again to \cite{schneider}.
\end{Remark}

Let us remark also that, for a regular convex body, the radius of curvature $\phi$
is positive (because $\phi=0$ would mean that the curvature $k$ is infinite).
In what follows, we use the operator $G$ defined by $G\phi=h$ where
$h$ is the solution of
\begin{equation}
  \label{eq:sturm}
  h''+h=\phi \ \text{in} \ (0,2\pi), \, h \  2\pi -
  \text{periodic}, \; \int_0^{2\pi} h(t)  \cos(t)\,dt =\int_0^{2\pi} h(t)  \sin(t)\,dt =0.
\end{equation}
Hence the area of $\Omega$ can be rewritten as
\begin{equation}
  A(\Omega)=\frac 12 \int_0^{2\pi} G\phi(t)\phi(t)\,dt.\label{paramphi2}
\end{equation}

\medskip
Without loss of generality and to simplify the presentation, we assume
from now on that the perimeter constraint is
\begin{equation}\label{per2pi}
P(\Omega)=2\pi.
\end{equation}

\medskip
Using the parametrization in $\theta$, Problem \eqref{pb2} can be written as
\begin{equation}\label{pb2-theta}
\inf_{\theta \in \mathcal{M}} j_\mu(\theta),
\end{equation}
where $\mathcal{M}$ is defined by \eqref{bht5.2} with $P=2\pi$
and
$$
j_\mu(\theta):=\frac 12 \int_0^{2\pi} {\theta'}^2(s)\,ds +\mu \int\int_T \cos(\theta(u))\sin(\theta(s))\,du\,ds
$$
with
$$
T=\{(u,s)\in \mathbb{R}^2 \ ; \ 0\leq u\leq s\leq 2\pi\}.
$$
Classical arguments allow to prove the existence of a minimum to problem \eqref{pb2}:
\begin{Theorem}\label{teoexi}
For all $\mu\geq 0$, there exists $\Omega^*\in \mathcal{C}$ which minimizes $J_\mu(\Omega)=
E(\Omega)+\mu A(\Omega)$.
\end{Theorem}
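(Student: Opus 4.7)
The plan is to apply the direct method of the calculus of variations in the $\theta$-parametrization, i.e.\ to work with the formulation \eqref{pb2-theta} over $\mathcal{M}$. Let $\theta_n\in\mathcal{M}$ be a minimizing sequence for $j_\mu$. Since $\mu\geq 0$ and the set $\Omega$ associated to $\theta_n$ has perimeter $2\pi$, the area $A(\Omega_n)$ is bounded by the isoperimetric inequality, so both terms in $j_\mu(\theta_n)$ are controlled. In particular $\int_0^{2\pi}(\theta_n')^2\,ds$ is uniformly bounded. Since the problem is invariant under rigid motions of $\Omega$, I can rotate each $\Omega_n$ so that $\theta_n(0)\in[0,2\pi]$; then the monotonicity $\theta_n'\geq 0$ together with $\theta_n(2\pi)=\theta_n(0)+2\pi$ gives a uniform $L^\infty$ bound on $\theta_n$. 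Hence $(\theta_n)$ is bounded in $W^{1,2}(0,2\pi)$.

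Next I would extract a subsequence, still denoted $\theta_n$, which converges weakly in $W^{1,2}(0,2\pi)$ and strongly in $C^0([0,2\pi])$ (by Rellich) to some $\theta_\infty$. The three constraints defining $\mathcal{M}$ pass to the limit: $\theta_n(0)+2\pi=\theta_n(2\pi)$ passes by uniform convergence; the monotonicity $\theta_n'\geq 0$ is preserved under weak limits in $L^2$ (the cone of nonnegative functions is weakly closed); and the closing conditions $\int_0^{2\pi}\cos\theta_n\,ds=\int_0^{2\pi}\sin\theta_n\,ds=0$ pass by dominated convergence using the uniform convergence of $\theta_n$. Thus $\theta_\infty\in\mathcal{M}$.

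Finally I would show $j_\mu(\theta_\infty)\leq \liminf_n j_\mu(\theta_n)$. The Dirichlet-type term $\frac12\int(\theta')^2$ is convex in $\theta'$ and therefore weakly lower semicontinuous in $W^{1,2}$. The area term $\iint_T \cos\theta(u)\sin\theta(s)\,du\,ds$ is in fact continuous: since $\theta_n\to\theta_\infty$ uniformly, $\cos\theta_n\sin\theta_n\to\cos\theta_\infty\sin\theta_\infty$ uniformly on the bounded triangle $T$, so the double integral converges. Combining these two facts gives $j_\mu(\theta_\infty)\leq\inf_{\mathcal{M}}j_\mu$, and the associated convex body $\Omega^*\in\mathcal{C}$ is a minimizer.

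The only mildly delicate point is to ensure the limit $\theta_\infty$ genuinely corresponds to a convex body in $\mathcal{C}$: one must check that after the rotation normalization $\theta_\infty$ encodes a closed, simple, convex curve. Closedness follows from the two integral conditions; convexity from $\theta_\infty'\geq 0$; and the total variation condition $\theta_\infty(2\pi)-\theta_\infty(0)=2\pi$ guarantees that the curve is simple with the correct winding. No boundary-regularity obstruction arises because $W^{1,2}$ embeds in $C^0$, which is exactly the regularity built into the definition of $\mathcal{M}$.
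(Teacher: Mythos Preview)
Your argument is correct and follows essentially the same route as the paper: the direct method in the $\theta$-formulation, with weak $W^{1,2}$ compactness, uniform convergence via the compact embedding into $C^0$, closedness of $\mathcal{M}$, and lower semicontinuity of $j_\mu$. You supply more detail than the paper does (the rotation normalization for the $L^\infty$ bound, the explicit continuity of the area term, and the check that $\theta_\infty$ encodes a genuine convex body), but the underlying strategy is identical.
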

\begin{proof}
Let $\theta_n\in \mathcal{M}$ corresponding to a minimizing sequence of domains $\Omega_n$. Necessarily $\theta_n$ is bounded
in $W^{1,2}(0,2\pi)$, therefore we can extract a subsequence which converges weakly to some $\theta^*$
in $W^{1,2}(0,2\pi)$ and uniformly in $C^0([0,2\pi])$ (the embedding $W^{1,2}(0,2\pi)\hookrightarrow
C^0([0,2\pi])$ being compact). Thus $\theta^*$ is non-decreasing, $\theta^*(0)+2\pi=\theta^*(2\pi)$ and
$$
\int_0^{2\pi} \cos(\theta^*(s)) \ ds=\int_0^{2\pi} \sin(\theta^*(s)) \ ds=0.
$$
Moreover,  $J_\mu(\theta^*)\leq \lim\inf J_\mu(\theta_n)$ which proves the result.
\end{proof}

\subsection{Optimality conditions and regularity}
We want to characterize the optimum of the problem
 \begin{equation}
   \label{jmin}
   \min_{\Omega \in \mathcal{C}} E(\Omega) +\mu A(\Omega),
 \end{equation}
 where $\mathcal{C}$ is defined by \eqref{bht5.0} and where \eqref{per2pi} holds.
 
We first write optimality condition for \eqref{jmin} by considering the parametrization $(x(s),y(s))$ of $\partial \Omega$ such that
\begin{equation}\label{xy}
x'(s)=\cos \theta(s), \quad y'(s)=\sin \theta(s), \quad (s\in [0,2\pi]).
\end{equation}
Without loss of generality, we may assume
\begin{equation}\label{ci}
x(0)=0, \quad y(0)=0.
\end{equation}

The following result gives the main properties of a function $\theta$ associated to an optimal
domain $\Omega$.
\begin{Theorem}\label{Ththeta}
 Assume $\theta$ is associated to an optimal domain $\Omega$ solution of \eqref{jmin}.
 Then $\theta\in W^{2,\infty}(0,2\pi)$ and there exist Lagrange multipliers $\lambda_1$, $\lambda_2$
 and a constant $C$ such that, for all $s\in [0,2\pi]$
\begin{equation}\label{first-opt}
\theta'(s) = \frac{\mu}{2} \left(\frac{\lambda_1^2+\lambda_2^2}{\mu^2}+\frac{2C}{\mu}-
\left[x(s)-\frac{\lambda_2}{\mu}\right]^2-\left[y(s)+\frac{\lambda_1}{\mu}\right]^2 \right)^-
\end{equation}
where $(\cdot)^-$ denotes the negative part of a real number.
\end{Theorem}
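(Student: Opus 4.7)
The plan is to derive the Euler--Lagrange equation for $\theta$ first on the open set $\{\theta'>0\}$ (where the convexity constraint is inactive), then observe that the right-hand side of the resulting ODE has exactly the structure needed to be integrated directly via the identities $x'=\cos\theta$, $y'=\sin\theta$. The main obstacle will be to glue the integration constants coming from the different connected components of $\{\theta'>0\}$.

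\textbf{Optimality condition.} First I rewrite $A(\theta)=\int_0^{2\pi}x(s)\sin\theta(s)\,ds$ (which follows from \eqref{paramtheta} by Fubini); a further Fubini plus integration by parts computation using the closure conditions $x(2\pi)=y(2\pi)=0$ gives
$$
\delta A\cdot\eta = \int_0^{2\pi}\eta(s)\bigl[x(s)\cos\theta(s)+y(s)\sin\theta(s)\bigr]\,ds.
$$
Introducing Lagrange multipliers $\lambda_1,\lambda_2\in\mathbb{R}$ for the closure constraints $\int\cos\theta=\int\sin\theta=0$ (allowed since the linearizations of the two closure conditions are independent at a regular $\theta$) and handling the cone $\{\theta'\geq 0\}$ by a variational-inequality argument, the first-order condition reads
$$
\int_0^{2\pi}\theta'\eta'\,ds + \int_0^{2\pi}\eta\bigl[\mu x\cos\theta+\mu y\sin\theta+\lambda_1\sin\theta-\lambda_2\cos\theta\bigr]\,ds \geq 0
$$
for every periodic $\eta\in W^{1,2}(0,2\pi)$ with $\eta'\geq 0$ a.e.\ on $\{\theta'=0\}$.

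\textbf{ODE and direct integration on $\{\theta'>0\}$.} On the open set $\{\theta'>0\}$ both signs of any $\eta\in C_c^\infty$ supported there are admissible, so the inequality becomes an equality and integration by parts yields the pointwise ODE
$$
\theta''(s) = (\mu x(s)-\lambda_2)\cos\theta(s)+(\mu y(s)+\lambda_1)\sin\theta(s).
$$
The key observation is that, since $x'=\cos\theta$ and $y'=\sin\theta$, the right-hand side is $\tfrac{d}{ds}\bigl[\tfrac{\mu}{2}(x^2+y^2)-\lambda_2 x+\lambda_1 y\bigr]$. Integrating on each connected component of $\{\theta'>0\}$ therefore yields
$$
\theta'(s)=\tfrac{\mu}{2}\bigl(x(s)^2+y(s)^2\bigr)-\lambda_2 x(s)+\lambda_1 y(s)-C_i,
$$
and completing the square rewrites this in the form announced in \eqref{first-opt}, with a constant $C_i$ a priori depending on the component.

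\textbf{Gluing the constants and regularity.} The main technical step is to show that the $C_i$ are all equal. I would test the variational inequality against perturbations whose distributional derivative is $\delta_{s_0}-\delta_{s_1}$ with $s_0\in\{\theta'=0\}$ and $s_1\in\{\theta'>0\}$: the positive Dirac sits on the active set, so the admissibility condition $\eta'\geq 0$ on $\{\theta'=0\}$ is satisfied, while the sign of $\eta'$ is free at $s_1\in\{\theta'>0\}$. Setting $F(x,y):=\tfrac{\mu}{2}(x^2+y^2)-\lambda_2 x+\lambda_1 y$, the inequality then reduces to $F(x(s_0),y(s_0))\leq C_{s_1}$. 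Letting $s_0$ tend to a boundary point of any component $j$, where $F=C_j$ by continuity, produces $C_j\leq C_i$ for every pair $(i,j)$, so $C_i=:C$ is a single constant. The same inequality $F\leq C$ on the flat segments guarantees that the formula \eqref{first-opt} returns $0$ there, and hence it holds on all of $[0,2\pi]$. Finally, $(x(s),y(s))$ is $1$-Lipschitz in $s$ and $(\cdot)^-$ is Lipschitz, so the right-hand side of \eqref{first-opt} is Lipschitz in $s$; this means $\theta'$ is Lipschitz, i.e.\ $\theta\in W^{2,\infty}(0,2\pi)$.
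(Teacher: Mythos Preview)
Your proof is essentially correct and arrives at the same formula, but the route differs from the paper's in one important respect. Rather than work component by component and then glue, the paper introduces from the outset an $L^2_+$ Lagrange multiplier $\ell$ for the inequality constraint $\theta'\geq 0$ (via the Maurer--Zowe framework), together with the complementary slackness relation $\int \ell\,\theta'=0$. Testing the resulting Euler--Lagrange identity against $v\in W^{1,2}_0$ and one integration by parts gives $\theta'=-F+\ell-C$ \emph{globally}, with a single constant $C$; the algebraic identity $\ell(\ell-F-C)=(\ell-g^+)^2+g^-\ell+g^+(\ell-F-C)$ (where $g^\pm$ are the positive/negative parts of $F+C$) then forces $\ell=(F+C)^+$ and hence $\theta'=(F+C)^-$ at once. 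This completely bypasses the gluing problem.

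Your gluing argument is clever and does work, but as written it skates over a regularity issue: since $\theta'$ is a priori only $L^2$, the set $\{\theta'>0\}$ and point evaluations of $\theta'$ are only defined up to null sets, so ``letting $s_0$ tend to a boundary point where $F=C_j$ by continuity'' and evaluating $\int\theta'\eta'$ for the Dirac-type perturbations both need justification. The fix is standard---approximate the Dirac test functions by mollifiers, use Lebesgue points of $\theta'$ for $s_0$, and define $\{\theta'>0\}$ as the maximal open set on which $\theta'>0$ a.e.---but it should be said. The payoff of your approach is that it avoids invoking an abstract multiplier theorem; the payoff of the paper's is that there is no gluing to do at all.
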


We postpone the proof of Theorem \ref{Ththeta} at the end of the section.

\begin{Remark}
On the strictly convex parts of the boundary of $\Omega$, \eqref{first-opt} writes
\begin{equation}\label{opcs}
\theta'(s) = \frac{\mu}{2} \left(
\left[x(s)-\frac{\lambda_2}{\mu}\right]^2+\left[y(s)+\frac{\lambda_1}{\mu}\right]^2 -
\frac{\lambda_1^2+\lambda_2^2}{\mu^2}-\frac{2C}{\mu}\right)
\end{equation}
and  by a classical bootstrap argument, this shows that $\theta$ is indeed $C^\infty$. 
In the non-strictly convex case  there may be a lack of regularity due to the connection points between segments and strictly convex parts.

For similar regularity results for shape optimization problems
with convexity constraints, in a more general context, we refer to \cite{LNP}. 
\end{Remark}

By setting
\begin{equation}\label{bht1.0}
R_0^2 := \frac{\lambda_1^2+\lambda_2^2}{\mu^2}+\frac{2C}{\mu}, \quad
Q = \left(\frac{\lambda_2}{\mu}\,,-\frac{\lambda_1}{\mu}\right)
\end{equation}
and
\begin{equation}\label{bht5.3}
M(s):=\left( \int_0^s \cos(\theta(t))\ dt, \ \int_0^s \sin(\theta(t))\ dt   \right)\in \partial \Omega,
\end{equation}
we can write \eqref{first-opt} as
\begin{equation}\label{bht1.1}
k(s)=\frac{\mu}{2}\left(  R_0^2 - \left\|\overrightarrow{QM(s)} \right\|^2 \right)^- \quad \forall s\in [0,2\pi].
\end{equation}
In particular, if $k(s)>0$, then
\begin{equation}\label{bht1.2}
k(s)=\frac{\mu}{2}\left( \left\|\overrightarrow{QM(s)} \right\|^2- R_0^2   \right).
\end{equation}

Using the tools of shape derivative, we can also write the optimality condition for the curvature $k$ of an optimal domain
in a different way.
\begin{Proposition}\label{P01}
Let $k$ be the curvature associated to an optimal domain $\Omega$ solution of \eqref{jmin}.
 Then, on the strictly convex parts it holds
 \begin{equation}\label{ve0.1}
k''(s)=-\frac{1}{2} k^3 - \lambda  k + \mu,
\end{equation}
where
\begin{equation}\label{bht5.4}
\lambda:=\frac{2\mu A(\Omega) -E(\Omega)}{2\pi}
\end{equation}
Moreover, using the notations introduced in \eqref{bht1.0} and \eqref{bht5.3}, and denoting by $\nn$ 
the unit normal exterior vector to $\Omega$, it holds at the point $M(s)$
\begin{equation}\label{first-opt-geom}
\langle\overrightarrow{QM}, \nn\rangle =\frac{\lambda}{\mu} + \frac{1}{2\mu}\,k^2 \quad \text{in}\ \partial \Omega.
\end{equation}
\end{Proposition}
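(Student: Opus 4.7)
My plan is to prove the two statements in sequence: first the ODE \eqref{ve0.1} via shape derivatives and a Lagrange multiplier argument, then the geometric identity \eqref{first-opt-geom} by differentiating \eqref{bht1.2} twice along the arc length and substituting the just-derived expression for $k''$.

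For the first step I localise the variation to an open strictly convex arc, where the Remark after Theorem \ref{Ththeta} ensures $\theta\in C^\infty$ and $k>0$. For a smooth, compactly-supported normal velocity $V$ on such an arc, the standard first-variation formulas read $\delta P[V]=\int kV\,ds$, $\delta A[V]=\int V\,ds$ and $\delta E[V]=-\int\bigl(k''+\tfrac12 k^3\bigr)V\,ds$, the last being the classical first variation of the $L^2$-norm of the plane-curve curvature, obtained by differentiating simultaneously the curvature and the arc-length element and performing two integrations by parts (the compact support of $V$ killing the boundary terms). Since $\Omega$ is a constrained minimiser with fixed perimeter, a Lagrange multiplier $\lambda\in\R$ exists satisfying $\delta E[V]+\mu\,\delta A[V]=\lambda\,\delta P[V]$ for every admissible $V$, and matching integrands yields the ODE $k''=-\tfrac12 k^3-\lambda k+\mu$. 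To identify $\lambda$ explicitly I test the Lagrange identity globally against $V(s)=\langle M(s),\nn(s)\rangle$, which corresponds to the infinitesimal dilation $M\mapsto(1+t)M$ around an interior point. Three classical identities enter: $\int_{\partial\Omega}\langle M,\nn\rangle\,ds=2A$ (divergence theorem), $\int_{\partial\Omega}k\,\langle M,\nn\rangle\,ds=P=2\pi$ (via $ds=(h+h'')\,dt$ with $k=1/(h+h'')$), and $\int_{\partial\Omega}\bigl(k''+\tfrac12 k^3\bigr)\langle M,\nn\rangle\,ds=E$ (equivalent to the scaling $E((1+t)\Omega)=E(\Omega)/(1+t)$). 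Substitution gives $-E+2\mu A=2\pi\lambda$, which is precisely \eqref{bht5.4}.

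Identity \eqref{first-opt-geom} then follows by differentiating \eqref{bht1.2} twice along $s$: with $M'(s)=\tau(s)$ (unit tangent) and the Frenet relations $\tau'=-k\,\nn$, $\nn'=k\,\tau$ (for $\nn$ the outward normal of the counter-clockwise parametrisation), a first differentiation gives $k'(s)=\mu\,\langle\overrightarrow{QM(s)},\tau(s)\rangle$ and a second gives $k''(s)=\mu-\mu k(s)\,\langle\overrightarrow{QM(s)},\nn(s)\rangle$; inserting the ODE and solving for $\langle\overrightarrow{QM},\nn\rangle$ produces \eqref{first-opt-geom} on every strictly convex arc. Extension to flat pieces of $\partial\Omega$ is by continuity: along a segment $\nn$ is constant and $M$ moves on a line, so $\langle\overrightarrow{QM},\nn\rangle$ is constant there, and its value matches the limit $\lambda/\mu$ coming from the adjacent strictly convex arcs (where $k\to 0$).

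The principal obstacle I anticipate is the careful derivation of the formula $\delta E[V]=-\int(k''+\tfrac12 k^3)V\,ds$: although classical, it requires simultaneously tracking the first-order variations of the curvature and of the arc-length element under the perturbation, and the sign depends delicately on the orientation convention chosen for $\nn$. Once this formula is in hand, the remainder of the argument reduces to routine manipulations with the Frenet frame and standard integrations by parts.
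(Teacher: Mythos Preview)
Your proof is correct and essentially mirrors the paper's: both obtain the ODE \eqref{ve0.1} from the shape-derivative Lagrange condition on strictly convex arcs, and both derive \eqref{first-opt-geom} by differentiating the relation \eqref{bht1.2} (equivalently \eqref{opcs}) twice along the arc length and matching the resulting expression $k''=\mu-\mu k\,\langle\overrightarrow{QM},\nn\rangle$ against the ODE, then extending to the segments by continuity. The only minor difference is the identification of $\lambda$: you test the Lagrange relation against the dilation field and use the scaling laws $\delta E=-E$, $\delta A=2A$, $\delta P=P$, whereas the paper first establishes \eqref{first-opt-geom} and then integrates it over $\partial\Omega$; both yield $2\mu A(\Omega)=2\pi\lambda+E(\Omega)$ in one line.
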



\begin{Remark}
One can wonder whether a relation like $\langle\overrightarrow{QM}, \nn\rangle = a+b k^2$ implies that
the domain is a disk. According to Andrews, \cite[Theorem 1.5]{and}, this is certainly true if $a\leq 0$ and $b\geq 0$, since it is possible to prove that the isoperimetric ratio $P^2/A$ decreases
under a flow driven by such a relation.
As we will see below (Sections \ref{secE} and \ref{secnumeric}) this is not true in general
if both coefficients $a$ and $b$ are positive.
\end{Remark}

The proof of Proposition \ref{P01} makes the use of shape derivatives. For the reader convenience we present in the following lemma
the shape derivative of the area, of the perimeter and of the elastic energy. We postpone the proof to the Appendix.
\begin{Lemma}\label{Lem-shape}
The shape derivatives of the three quantities $A,P,E$ are given by
\begin{gather*}
dA(\Omega;V)=\int_{\partial\Omega} \langle V, \nn\rangle \,ds,\\
dP(\Omega;V)=\int_{\partial\Omega} k \langle V, \nn\rangle \,ds,\\
dE(\Omega;V)=-\int_{\partial\Omega} (k''+\frac{1}{2} k^3) \langle V, \nn\rangle \,ds.
\end{gather*}
where $V$ is any deformation field and $\nn$ the exterior normal vector.
\end{Lemma}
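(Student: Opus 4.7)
The first two formulas are classical Hadamard formulas. For $dA$ I would change variables $x \mapsto x + tV(x)$ in $A(\Omega_t) = \int_{\Omega_t} 1\,dx$, differentiate the Jacobian $\det(I + tDV)$ at $t=0$ to obtain $\int_\Omega \operatorname{div} V$, and conclude by the divergence theorem. For $dP$ the analogous computation uses the tangential Jacobian of the boundary parametrization, whose derivative at $t=0$ is the tangential divergence $\operatorname{div}_\tau V$; integrated along the closed curve $\partial\Omega$ this equals $\int_{\partial\Omega} k \langle V,\nn\rangle\,ds$, because the purely tangential part is an exact $\partial_s$-derivative and vanishes by periodicity. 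Both are standard in shape calculus and I would quote them.

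The elastic energy is the delicate case. I would parametrize $\partial\Omega_t$ by $\gamma_t(s) = \gamma(s) + t V(\gamma(s))$, where $\gamma$ is an arc-length parametrization of $\partial\Omega$. By the Hadamard structure theorem only the normal component $V_n := \langle V,\nn\rangle$ can matter, and I may reduce to $V = V_n \nn$: any tangential perturbation just reparametrizes the boundary and leaves $E$, $P$, $A$ invariant. The two basic identities I then need are
$$
\frac{d}{dt}\bigg|_{t=0} ds_t \;=\; k\,V_n\,ds \qquad \text{and} \qquad \frac{d}{dt}\bigg|_{t=0} k_t \;=\; -\partial_{ss}^2 V_n - k^2 V_n,
$$
obtained by differentiating $|\gamma_t'(s)|$ and the planar curvature formula at $t=0$ and using the Frenet identities. (The first identity, evaluated on a disk of radius $R$ with $V_n\equiv 1$, recovers $\tfrac{d}{dt} 2\pi(R+t) = 2\pi$; the second recovers $\tfrac{d}{dt} 1/(R+t)\big|_0 = -1/R^2$.)

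Substituting these in $\tfrac12 \tfrac{d}{dt}\int k_t^2\,ds_t$ gives
$$
dE(\Omega; V) \;=\; -\int_{\partial\Omega} k\,\partial_{ss}^2 V_n\,ds \;-\; \int_{\partial\Omega} k^3 V_n\,ds \;+\; \tfrac12 \int_{\partial\Omega} k^3 V_n\,ds.
$$
Two integrations by parts (no boundary terms, since $\partial\Omega$ is closed) transfer the $s$-derivatives from $V_n$ onto $k$ in the first integral, producing $-\int_{\partial\Omega} k'' V_n\,ds$, while the cubic terms collapse to $-\tfrac12\int_{\partial\Omega} k^3 V_n\,ds$. Adding them gives $-\int_{\partial\Omega}(k''+\tfrac12 k^3) V_n\,ds$, as claimed.

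The main obstacle is the derivation of the shape derivative of the curvature, which depends on orientation conventions for $\nn$ and on the sign convention for $k$, so sign errors are easy to make. I would double-check the final formula on the disk $D_R$ with $V_n \equiv 1$, where it must yield $dE = -\pi/R^2$ to match $E(D_R) = \pi/R$. Once the two basic derivative identities are secured and verified on this test case, what remains is the routine pair of integrations by parts outlined above.
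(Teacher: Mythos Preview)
Your proposal is correct, and it takes a genuinely different route from the paper's own proof.

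The paper's Appendix carries out a direct coordinate computation: it keeps the full deformation field $V=(V_1,V_2)$, writes $x_\varepsilon(s)=x(s)+\varepsilon V_1$, $y_\varepsilon(s)=y(s)+\varepsilon V_2$, expands the planar curvature formula $k_\varepsilon=(-x_\varepsilon'' y_\varepsilon'+y_\varepsilon'' x_\varepsilon')/(x_\varepsilon'^2+y_\varepsilon'^2)^{3/2}$ and the speed $|\gamma_\varepsilon'|$ to first order in $\varepsilon$, multiplies out $\tfrac12 k_\varepsilon^2\,|\gamma_\varepsilon'|$, and then integrates by parts term by term until all derivatives sit on $\theta$. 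Only at the very end do the many terms in $V_1,V_2$ recombine into $\langle V,\nn\rangle$. No appeal is made to the structure theorem; tangential components are carried throughout and cancel in the final bookkeeping.

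Your approach is shorter and more conceptual: you invoke the Hadamard structure theorem to reduce to normal perturbations $V=V_n\nn$ from the outset, and then rely on the standard first-variation identities $\dot{(ds)}=kV_n\,ds$ and $\dot k=-\partial_{ss}^2 V_n-k^2 V_n$ (familiar from the curve-shortening literature). This buys you a two-line computation and a transparent sanity check on the disk, at the cost of importing those identities and the structure theorem as black boxes. The paper's brute-force calculation is self-contained and requires no such imports, but is considerably longer and more prone to algebraic slips. Both are valid; yours is the argument one would expect from someone fluent in geometric flows, the paper's is the one that leaves nothing to the reader.
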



We are now in position to prove Proposition \ref{P01}.
\begin{proof}[Proof of  Proposition \ref{P01}]
From Lemma \ref{Lem-shape}, we deduce that for any admissible $V$, a solution $\Omega$ of \eqref{jmin} satisfies
\begin{equation}\label{bht0.6}
-\int_{\partial\Omega} (k''+\frac{1}{2} k^3) \langle V, \nn\rangle \,ds + \mu \int_{\partial\Omega}\langle V, \nn\rangle \,ds=
{\lambda} \int_{\partial\Omega} k \langle V, \nn\rangle\,ds,
\end{equation}
for some Lagrange multiplier ${\lambda}$ associated to the perimeter constraint.
In particular, on any part of $\partial \Omega$ where the domain is strictly convex,
\begin{equation}\label{bht0.5}
 \theta'''(s)=k''(s)=-\frac{1}{2} k^3 - {\lambda} k + \mu.
\end{equation}

On the other hand, on any part of $\partial \Omega$ where the domain is strictly convex
we can differentiate the optimality condition \eqref{opcs} obtaining
\begin{equation}\label{bht0.4}
 \theta'''(s) = \mu+\mu \left[\left(x(s)-\frac{\lambda_2}{\mu}\right)x''(s)+\left(y(s)+\frac{\lambda_1}{\mu}\right)y''(s)\right].
\end{equation}

Combining the above relation with \eqref{bht0.5}, we deduce that on the part of
$\partial \Omega$ where $k>0$ a.e. (namely on a strictly
convex part):
\begin{equation}
  \label{bht0.7}
\mu \;\langle\overrightarrow{QM}, \nn\rangle = {\lambda} + \frac{1}{2}\,k^2,
\end{equation}
where $Q$ denotes the point defined by \eqref{bht1.0}
and the exterior normal vector is $\nn=(\sin \theta,-\cos \theta).$

By continuity the above relation still holds true on segments of $\partial \Omega$ and it  writes
\begin{equation}
  \label{bht0.8}
\langle\overrightarrow{QM}, \nn\rangle = \frac{{\lambda}}{\mu}.
\end{equation}
Therefore for an optimal domain condition \eqref{bht0.7} holds true on the whole boundary.
Integrating \eqref{bht0.7}  on $\partial \Omega$, we obtain
$$
2\mu A(\Omega) = 2\pi {\lambda}  + E(\Omega).
$$
This proves the proposition.
\end{proof}

\begin{Remark}\label{remPer}
Let us focus on Equation \eqref{ve0.1}. 
This differential equation has a central role in the analysis of $\partial\Omega$ as it can be seen in the proof of the next proposition.

Notice that it can be explicitly solved by quadrature using Jacobian elliptic functions as shown in Section \ref{secnumeric}, Lemma \ref{lemmacn}.
\end{Remark}

\begin{Proposition}\label{P02}
Assume that $\Omega$ is an optimal domain. Then $\partial \Omega$ is periodic and is the union of suitably rotated and translated copies of a symmetric curve.

In particular, in the strictly convex parts, it holds
\begin{equation}\label{bht1.4}
\overrightarrow{QM}(s)= \left(\frac{\lambda}{\mu} + \frac{1}{2\mu}\,k(s)^2\right)\nn(s)+\frac{1}{\mu} k'(s) \tau(s).
\end{equation}
If the boundary of $\Omega$ contains segments, then they have the same length 
$$
L=2\sqrt{R_0^2-\left(\frac{\lambda}{\mu}\right)^2}.
$$
\end{Proposition}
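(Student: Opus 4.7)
The plan is to establish the three claims separately: the decomposition formula \eqref{bht1.4}, the periodicity/symmetry of $\partial\Omega$, and the common length of the segments. The main tools are the optimality conditions of Proposition \ref{P01} (equations \eqref{ve0.1} and \eqref{first-opt-geom}) together with the formulas \eqref{bht1.1}--\eqref{bht1.2}.

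For \eqref{bht1.4}, it suffices to compute the tangential component of $\overrightarrow{QM}$ in the orthonormal frame $(\tau,\nn)$. Differentiating $\langle\overrightarrow{QM},\nn\rangle$ with respect to $s$, and using $M'(s)=\tau$ together with the Frenet relation $\nn'=k\tau$, one obtains
\[
\frac{d}{ds}\langle\overrightarrow{QM},\nn\rangle = \langle\tau,\nn\rangle + k\,\langle\overrightarrow{QM},\tau\rangle = k\,\langle\overrightarrow{QM},\tau\rangle.
\]
On the other hand, differentiating the right-hand side of \eqref{first-opt-geom} gives $kk'/\mu$. Dividing by $k$ on the strictly convex part yields $\langle\overrightarrow{QM},\tau\rangle = k'/\mu$, and combining with \eqref{first-opt-geom} produces \eqref{bht1.4}.

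For periodicity and symmetry, I would exploit the autonomous second-order ODE \eqref{ve0.1} satisfied by $k$ on each strictly convex arc. Multiplying by $k'$ and integrating produces a first integral of the form
\[
(k')^2 = -\tfrac14 k^4 - \lambda k^2 + 2\mu k + C_1,
\]
so $k(s)$ oscillates periodically between two consecutive roots of the right-hand side and, by reversibility of the autonomous ODE, is symmetric about every turning point. Since a planar curve is determined by its signed curvature up to a rigid motion, this translates into a mirror symmetry of each arc about the normal line at the turning point, and $\partial\Omega$ is obtained by concatenating rigidly moved copies of one fundamental symmetric piece. If $k\equiv 0$ on some portion of $\partial\Omega$, the boundary contains a segment; in that case the fundamental piece of $k$ touches $0$, so the alternating configuration of segments and strictly convex arcs is itself periodic.

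For the length of the segments, on a segment the unit vectors $\tau$ and $\nn$ are constant and, by \eqref{bht0.8}, $\langle\overrightarrow{QM},\nn\rangle = \lambda/\mu$ throughout. At each endpoint $M_i$ the segment meets a strictly convex arc along which \eqref{bht1.2} holds, so passing to the limit $k\to 0$ yields $\|\overrightarrow{QM_i}\|^2 = R_0^2$. Parametrising the segment as $M(t)=M_1+t\tau$, $t\in[0,L]$, the identity $\|\overrightarrow{QM(L)}\|^2 = \|\overrightarrow{QM_1}\|^2$ gives $\langle\overrightarrow{QM_1},\tau\rangle = -L/2$, and decomposing $\overrightarrow{QM_1}$ in the frame $(\tau,\nn)$ produces
\[
R_0^2 = \left(\frac{\lambda}{\mu}\right)^2 + \frac{L^2}{4},
\]
hence $L = 2\sqrt{R_0^2 - (\lambda/\mu)^2}$. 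Since the right-hand side depends only on the global constants, all segments have this common length. The step I expect to require most care is the rigorous transfer from periodicity of the curvature function $k(s)$ to a global periodic structure of the curve, especially handling the transition points where strictly convex arcs meet segments and verifying that the matching is consistent with closing up the boundary after arc length $2\pi$.
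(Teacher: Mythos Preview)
Your proposal is correct and follows essentially the same route as the paper: the paper also invokes the autonomous ODE \eqref{ve0.1} and Cauchy--Lipschitz (your ``reversibility'' argument) for the symmetry/periodicity, derives the tangential component $\langle\overrightarrow{QM},\tau\rangle=k'/\mu$ by differentiating an optimality relation (it uses \eqref{bht1.2} rather than \eqref{first-opt-geom}, but this is the same computation), and obtains $L$ from $\|\overrightarrow{QM}\|=R_0$ together with $\langle\overrightarrow{QM},\nn\rangle=\lambda/\mu$ at a segment endpoint. The one place the paper is slightly more explicit than your sketch is the matching across a segment: it computes the one-sided limits $k'(b^-)=-\mu\sqrt{R_0^2-(\lambda/\mu)^2}$ and $k'((b+L)^+)=+\mu\sqrt{R_0^2-(\lambda/\mu)^2}$ and then applies Cauchy--Lipschitz to conclude $k(s+b+L)=k(b-s)$, which is exactly the ``transition points'' issue you flagged as requiring care.
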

\begin{proof}
First of all, let us consider the strictly convex case. Let us assume that $k(s)$ attains its maximum $k_M$ at $s=0$ and its minimum $k_m$ at $s=s_1$. Using equation \eqref{ve0.1} and the Cauchy--Lipschitz Theorem, one can deduce that $k$ is symmetric with respect to $s_1$ and attains $k_M$ also at $s=2s_1$. Using again the Cauchy--Lipschitz Theorem we find
$k(s)=k(s+2s_1)$ for $s\in [0,2s_1]$ that concludes the first part of the proposition.
%
%
%

In the case where there is a segment, we assume again that $k(s)$ attains its maximum $k_M$ at $s=0$. Let us call $b$ the first positive zero of $k$ so that $k>0$ in the interval $[0,b)$. Assume $k(s)=0$ for $s\in [b,b+L]$ and
$k(s)>0$ for $s$ in a right neighbourhood of $s>b+L$.
Using \eqref{bht1.2} and the continuity of $k(s)$, we deduce
\begin{equation}\label{bht5.5}
\left\|\overrightarrow{QM(b)}\right\|=R_0.
\end{equation}
Using \eqref{first-opt-geom} we obtain
\begin{equation}\label{bht5.6}
\langle\overrightarrow{QM(b)}, \nn(b)\rangle =\frac{\lambda}{\mu}.
\end{equation}
As a consequence,
$$
\left| \langle\overrightarrow{QM(b)}, \tau(b)\rangle \right|=\sqrt{R_0^2-\left(\frac{\lambda}{\mu}\right)^2}.
$$

Differentiating \eqref{bht1.2} in the strictly convex part $s\in (0,b)$, we deduce 
\begin{equation}\label{bht5.7}
k'(s)=\mu\; \langle\overrightarrow{QM(s)}, \tau(s)\rangle.
\end{equation}
Thus
\begin{equation}\label{bht5.7b}
\lim_{s\to b^-} k'=-\mu \sqrt{R_0^2-\left(\frac{\lambda}{\mu}\right)^2}.
\end{equation}
Following the above calculation, we can show 
$$
\lim_{s\to (b+L)^+} k'=\mu \sqrt{R_0^2-\left(\frac{\lambda}{\mu}\right)^2}.
$$
Using the Cauchy--Lipschitz Theorem, we deduce that $k(s+b+L)=k(b-s)$ for $s\in [0,b]$. That is the boundary of $\Omega$ is composed by symmetric curves with a segment of length $L$. To conclude it remains to estimate $L$. 
Equations \eqref{bht5.5} and   \eqref{bht5.6} entail $(L/2)^2=R_0^2-\left(\lambda/\mu\right)^2.$

In the strictly convex part, we combine \eqref{bht5.7} and \eqref{first-opt-geom}
to obtain \eqref{bht1.4}.
\end{proof}

Here below, we present the proof of Theorem \ref{Ththeta}.
\begin{proof}[Proof of Theorem \ref{Ththeta}]
The function $\theta(s)$ corresponds to a solution of \eqref{jmin} if and only if it is a solution of
\begin{equation}\label{pb_min_theta}
\inf_{\theta\in \mathcal{M}} j_\mu(\theta).
\end{equation}

Using classical theory for this kind of optimization problem with constraints in a Banach space (see, for instance,
Theorem 3.2 and Theorem 3.3 in \cite{MaurerZowe}),
we can derive the optimality conditions.
More precisely, let us introduce the closed convex cone $K$ of
$L^2(0,2\pi)\times \mathbb{R}^3$ defined by
$$
K:=L^2_+(0,{2\pi})\times \{(0,0,0)\},
$$
where
$$
L^2_+(0,{2\pi}):= \left\{\ell \in L^2(0,2\pi) \  ; \
  \ell \geq 0 \right\}.
$$
We also set for $\theta\in W^{1,2}(0,2\pi)$
$$
m(\theta)=\left(\theta', \quad \int_0^{2\pi} \cos(\theta(s)) \ ds, \quad \int_0^{2\pi}  \sin(\theta(s)) \ ds,
  \quad
\theta({2\pi})-\theta(0)-2\pi
\right).
$$
Then, Problem \eqref{pb_min_theta} can be written as
$$
\inf \left\{j_\mu(\theta), \quad \theta\in W^{1,2}(0,{2\pi} ), \quad m(\theta)\in K\right\}.
$$

As a consequence, for a solution $\theta$ of \eqref{pb_min_theta}
there exist $\ell\in L^2_+(0,{2\pi} )$, $(\lambda_1,\lambda_2,\lambda_3)\in\mathbb{R}^3$
such that the two following conditions hold:
\begin{eqnarray*}
j_\mu'(\theta)(v)=
\langle(\ell,\lambda_1,\lambda_2,\lambda_3),m'(\theta)(v)\rangle_{L^2(0,2\pi)\times
  \mathbb{R}^3} \quad \forall v\in W^{1,2}(0,2\pi),\\
\left((\ell,\lambda_1,\lambda_2,\lambda_3),m(\theta)\right)_{L^2(0,{2\pi} )\times
  \mathbb{R}^3} =0.
  \end{eqnarray*}

The two above conditions can be written as
\begin{multline}
  \label{opt1}
   \int_0^{2\pi}  \theta' v' \ ds
+  \frac{\mu}{2}\int_0^{2\pi} \int_0^s \cos(\theta(s)-\theta(t)) (v(s)-v(t))
 \ ds\ dt
\\
= \int_0^{2\pi}   \ell v' \ ds
-\lambda_1 \int_0^{2\pi}  \sin(\theta) v \ ds
+\lambda_2 \int_0^{2\pi}  \cos(\theta) v \ ds
+\lambda_3(v({2\pi} )-v(0)),
\end{multline}
\begin{equation}
  \label{opt2}
  \int_0^{2\pi}  \ell \theta' \ ds=0.
\end{equation}

In \eqref{opt1}, we have used that, due to the constraints of $\theta$ in \eqref{bht5.2} we have
\begin{multline}\label{eq10}
\int\int_T \sin(\theta(s))\cos(\theta(t))\,
ds\,dt = \int_0^{2\pi} \left(\int_0^s \cos(\theta(t)) \ dt\right)\sin(\theta(s)) \ ds=\\
- \int_0^{2\pi} \left(\int_0^s \sin(\theta(t)) \ dt\right)\cos(\theta(s)) \ ds
=\frac 12 \int\int_T \sin(\theta(s)-\theta(t))\,
ds\,dt.
\end{multline}
Standard calculation gives
$$
\int_0^{2\pi} \int_0^s \cos(\theta(s)-\theta(t)) (v(s)-v(t)) \
dt \ ds
=2 \int_0^{2\pi}
\left(\int_0^s \cos(\theta(s)-\theta(t)) \ dt
\right)
v(s) \ ds.
$$
We thus define
\begin{equation}\label{14:41}
f(s)=
\mu \int_0^s \cos(\theta(s)-\theta(t)) \ dt
+\lambda_1 \sin(\theta(s))-\lambda_2 \cos(\theta(s))\quad \mbox{for } s \in [0,2\pi]
\end{equation}
and we rewrite \eqref{opt1} as
\begin{equation}
   \int_0^{2\pi}  \theta' v' \ ds
+  \int_0^{2\pi}  f v \ ds
= \int_0^{2\pi}   \ell v' \ ds
\qquad v\in W^{1,2}_0(0,{2\pi} ).\label{opt1bis}
\end{equation}
Let us consider the continuous function $F\in W^{1,\infty}(0,2\pi)$ defined by
\begin{equation}\label{1.1}
F(s):=-\int_0^s f(\alpha) \ d\alpha.
\end{equation}
Then integrating by parts in \eqref{opt1bis} yields (for some constant $C$)
\begin{equation}
 \theta' = -F + \ell-C \quad \text{in} \ (0,{2\pi} ).\label{opt3}
\end{equation}
The above equation implies that 
$$
\ell -F-C \geq 0 \quad \text{in} \ (0,{2\pi} ).
$$
On the other hand condition \eqref{opt2} yields
$\ell \theta' =0 \quad \text{in} \ (0,{2\pi} )$ which implies
$\ell(\ell-F-C) =  0 \quad \text{in} \ (0,{2\pi} )$, thanks to relation \eqref{opt3}.

We rewrite the above equality by using the decomposition
$F+C=g^+-g^-$, (where $g^+$ and $g^-$ are the positive and negative parts of $F+C$):
\begin{eqnarray*}
\ell(\ell-F+C)=(\ell-g^++g^+)(\ell-g^++g^-) = (\ell-g^+)^2 +
g^-(\ell-g^+)+\\
+g^+(\ell-g^+)+g^+g^-
=(\ell-g^+)^2 + g^- \ell + g^+ (\ell-F-C)
\end{eqnarray*}
which is the sum of three non-negative terms. Thus
\begin{equation}\label{1.2}
\ell=(F+C)^+
\end{equation}
and in particular, from \eqref{opt3},
\begin{equation}\label{15:18}
 \theta' = (F+C)^- \quad \text{in} \ (0,{2\pi} )
\end{equation}
We deduce that $\theta\in W^{2,\infty}(0,2\pi)$.

\medskip
Using \eqref{xy} and \eqref{ci}, we can write
\begin{equation}\label{1.3}
\int_0^s \cos(\theta(s)-\theta(t)) \ dt=x(s) x'(s) + y(s) y'(s).
\end{equation}
Moreover, the function $F$ defined by \eqref{1.1} and \eqref{14:41} can be rewritten as
$$
F(s) = - \int_{0}^s \left[\mu \int_0^\alpha \cos(\theta(\alpha)-\theta(t)) \ dt + \lambda_1 \sin(\theta(\alpha)) -\lambda_2 \cos(\theta(\alpha)) \right] \ d \alpha
$$
and combining this relation with \eqref{1.3}, we obtain
\begin{multline*}
F(s) = - \int_{0}^s \left[\mu(x(\alpha) x'(\alpha) + y(\alpha) y'(\alpha)) + \lambda_1 y'(\alpha) -
\lambda_2 x'(\alpha) \right] \ d \alpha\\
=\frac{\mu}{2} \left(\frac{\lambda_1^2+\lambda_2^2}{\mu^2}- \left[x(s)-\frac{\lambda_2}{\mu}\right]^2-\left[y(s)+
\frac{\lambda_1}{\mu}\right]^2 \right).
\end{multline*}
The above relation and \eqref{15:18} yield
\eqref{first-opt}.
\end{proof}


\section{Geometric properties}\label{secgeom}
\subsection{Symmetries}
Using a classical {\it reflexion} method, we can prove that there always exists a minimizer
with a central symmetry:
\begin{Theorem}\label{T01}
There exists at least one minimizer of Problem \eqref{pb2} that has a center of symmetry.
\end{Theorem}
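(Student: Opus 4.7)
The plan is to produce two centrally symmetric admissible competitors $\tilde{\Omega}_1,\tilde{\Omega}_2$ from a given minimizer $\Omega^*$ by doubling each half of $\partial \Omega^*$ via a suitable point reflection, and to prove an additivity identity $J_\mu(\tilde{\Omega}_1)+J_\mu(\tilde{\Omega}_2)=2 J_\mu(\Omega^*)$. Since $\Omega^*$ is a minimizer, at least one of the $\tilde{\Omega}_i$ must then realize the minimum as well, and by construction it will be centrally symmetric.

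First I would fix a minimizer $\Omega^*$ and its associated $\theta\in\mathcal{M}$, and choose the origin of the arc length so that the tangents at $s=0$ and $s=\pi$ are anti-parallel, i.e.\ $\theta(\pi)=\theta(0)+\pi$. Such a reparametrization exists: extending $\theta$ by $\theta(s+2\pi)=\theta(s)+2\pi$ and setting $g(s):=\theta(s+\pi)-\theta(s)-\pi$, one has $g(0)+g(\pi)=0$, so by continuity $g$ vanishes at some $s_0$ and we can reparametrize from there. Then I would define
\begin{equation*}
\tilde{\theta}_1(s):=\begin{cases}\theta(s),& s\in[0,\pi],\\ \theta(s-\pi)+\pi,& s\in[\pi,2\pi],\end{cases}
\end{equation*}
and $\tilde{\theta}_2$ similarly from the second half. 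The normalization guarantees continuity at $s=\pi$; monotonicity of $\tilde{\theta}_i$ is inherited from $\theta'\geq 0$; and the identities $\cos(\alpha+\pi)=-\cos\alpha$, $\sin(\alpha+\pi)=-\sin\alpha$ yield $\int_0^{2\pi}\cos\tilde{\theta}_i\,ds=\int_0^{2\pi}\sin\tilde{\theta}_i\,ds=0$. Thus $\tilde{\theta}_i\in\mathcal{M}$, and the relation $\tilde{\theta}_i(s+\pi)=\tilde{\theta}_i(s)+\pi$ built into the construction implies that the corresponding body $\tilde{\Omega}_i$ is centrally symmetric: differentiating $\tilde{M}_i(s+\pi)+\tilde{M}_i(s)$ with respect to $s$ gives zero, so this sum is constant and the midpoint of the antipodal chord is the center.

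The additivity of the functional then needs to be checked. For the elastic energy it is immediate, as $\int_0^{2\pi}(\tilde{\theta}_i')^2\,ds$ equals twice the integral of $(\theta')^2$ over the corresponding half of $[0,2\pi]$. For the area I would use the arc-length expression $A(\Omega)=\tfrac{1}{2}\int_0^{2\pi}(x\sin\theta-y\cos\theta)\,ds$ coming from \eqref{xy}--\eqref{ci}: splitting the analogous expression for $A(\tilde{\Omega}_1)$ into integrals over $[0,\pi]$ and $[\pi,2\pi]$, substituting $\sigma=s-\pi$ on the second piece, and using the closure relations $\int_0^\pi\cos\theta\,d\sigma=x(\pi)$ and $\int_0^\pi\sin\theta\,d\sigma=y(\pi)$, the mixed boundary contributions $\pm x(\pi)y(\pi)$ cancel exactly, giving $A(\tilde{\Omega}_1)=\int_0^\pi(x\sin\theta-y\cos\theta)\,ds$; an identical computation yields $A(\tilde{\Omega}_2)=\int_\pi^{2\pi}(x\sin\theta-y\cos\theta)\,ds$, so that $A(\tilde{\Omega}_1)+A(\tilde{\Omega}_2)=2A(\Omega^*)$.

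The hard part will be exactly this area additivity: it fails for an arbitrary cut of $\partial\Omega^*$ and depends crucially on the antipodal choice $\theta(\pi)=\theta(0)+\pi$, which is what forces the boundary terms to cancel in the computation above. Once it is in place, combining it with the energy additivity gives $J_\mu(\tilde{\Omega}_1)+J_\mu(\tilde{\Omega}_2)=2J_\mu(\Omega^*)$, so both $\tilde{\Omega}_i$ realize the infimum and we obtain the sought centrally symmetric minimizer.
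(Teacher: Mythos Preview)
Your proposal is correct and follows essentially the same reflection argument as the paper: cut $\partial\Omega^*$ at two points with anti-parallel normals so that each arc has length $\pi$, double each arc by the point reflection about the midpoint of the chord, and use the additivity $J_\mu(\tilde{\Omega}_1)+J_\mu(\tilde{\Omega}_2)=2J_\mu(\Omega^*)$ to conclude. The only difference is presentational---you work analytically in the $\theta$-parametrization (and check the area additivity by an explicit cancellation of the cross terms), whereas the paper phrases the construction and the identity $A(\Omega_+)+A(\Omega_-)=2A(\Omega)$ geometrically.
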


\begin{Remark}
Notice that the symmetry result in Proposition \ref{P02}  does not imply the centrally symmetric result, as it can be seen considering a smooth approximation of an equilateral triangle.
\end{Remark}

\begin{proof}[Proof of Theorem \ref{T01}]
Let $m_\mu$ denotes the value of the minimum of problem \eqref{pb2}:
$m_\mu:=\min_{\Omega \in \mathcal{A}} E(\Omega)+\mu A(\Omega)$ and let us consider a minimizer
$\Omega$, which exists thanks to Theorem \ref{teoexi}. For any direction (unit vector) $\eta$ let us denote by $X(\eta)$ a point on the boundary
of $\Omega$ whose exterior normal vector is $\eta$.
By a continuity argument (change $\eta$ in $-\eta$), there exists at least one direction
$\eta$ such that the segment
joining $X(\eta)$ to $X(-\eta)$ (if not unique, choose one) cuts the boundary in two parts
$\Gamma_+(\eta)$ and $\Gamma_-(\eta)$ having the same length $\pi$.
Let us denote by $\Omega_+(\eta)$ (resp. $\Omega_-(\eta)$) the part of $\Omega$ bounded
by the segment $[X(\eta),X(-\eta)]$ and $\Gamma_+(\eta)$ (resp. $\Gamma_-(\eta)$), see
Figure \ref{figOeta} and Figure \ref{figOplus-minus}.


\newcommand{\insieme}{(-6,0) to[out=270,in=180] (1,-4) to[out=0, in=270] (6,0) to[out=90, in=0] (4,5) to[out=180, in=90] (-6,0)}
\newcommand{\oominus}{(-6,0) to[out=270,in=180] (1,-4) to[out=0, in=270] (6,0)-- (6,1)}
\newcommand{\ooplus}{(6,1) to[out=90, in=0] (4,5) to[out=180, in=90] (-6,0)}

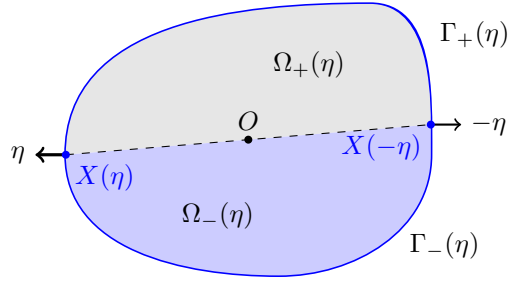
\begin{figure}[h]
\centering
\begin{tikzpicture}[x=4mm,y=4mm]
\draw[very thick, blue] \insieme; \draw (6,4)node[right]{$\Gamma_+(\eta)$};\draw (5,-3)[right]node{$\Gamma_-(\eta)$};
\draw[very thick,->] (-6,0)--(-7,0) node[left]{$\eta$};\draw[thick,->] (6,1)--(7,1)node[right]{$-\eta$};
\fill[gray!20] \ooplus;\draw (2,3)node{$\Omega_+(\eta)$};
\fill[blue!20] \oominus;\draw(-1,-2)node{$\Omega_-(\eta)$};
\draw[dashed] (-6,0)--(6,1);
\fill (0,0.5)node[above]{$O$} circle (1.5pt);\fill[blue] (-6,0)node[below right]{$X(\eta)$} circle (1.5pt);\fill[blue] (6,1)node[below left]{$X(-\eta)$} circle (1.5pt);
\end{tikzpicture}
\caption{The set $\Omega$ separated into the sets $\Omega_+(\eta)$ and $\Omega_-(\eta)$.}\label{figOeta}
\end{figure}

\begin{figure}[h]
\centering
\begin{tikzpicture}[x=4mm,y=4mm]
\draw (-6.3,2)node{$\Omega_+$};
\draw[very thick] \ooplus;
\fill[gray!20] \ooplus;\draw (3,3)node{$\Omega_+(\eta)$};
\begin{scope}[yshift=15, rotate=180]
\draw[very thick] \ooplus;
\fill[gray!20] \ooplus;\draw (-3,2)node{$\sigma(\Omega_+(\eta))$};
\end{scope}
\fill (0,0.5)node[above]{$O$} circle (1.5pt);\draw[dashed] (-6,0)--(6,1);
\draw[very thin, blue] \insieme;
\begin{scope}[xshift=8cm]
\draw (-6.5,2)node{$\Omega_-$};
\draw[very thick] \oominus;
\fill[blue!20] \oominus;\draw (3,-1)node{$\Omega_-(\eta)$};
\begin{scope}[yshift=11, rotate=180]
\draw[very thick] \oominus;
\fill[blue!20] \oominus;\draw (-3,-1)node{$\sigma(\Omega_-(\eta))$};
\end{scope}
\fill (0,0.5)node[above]{$O$} circle (1.5pt);\draw[dashed] (-6,0)--(6,1);
\draw[very thin, blue] \insieme;
\end{scope}
\end{tikzpicture}
\caption{The centrally symmetric sets $\Omega_+$ and $\Omega_-$.}\label{figOplus-minus}
\end{figure}
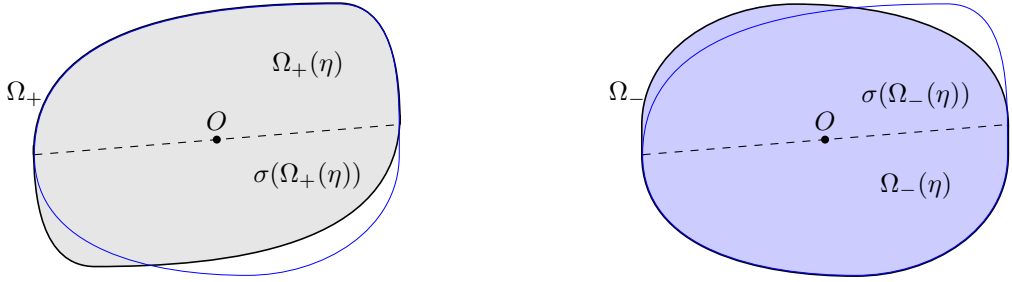

Let us denote by $O$
the middle of the segment $[X(\eta),X(-\eta)]$ and let $\sigma$ be the central symmetry with respect
to $O$; define
$$\Omega_+:=\Omega_+(\eta) \cup \sigma(\Omega_+(\eta)) \quad\mbox{and}\quad
\Omega_-:=\Omega_-(\eta) \cup \sigma(\Omega_-(\eta)).$$
By construction $\Omega_+$ and $\Omega_-$ have perimeter $2\pi$ and then they are admissible.
It follows
\begin{eqnarray*}\label{sym1}
E(\Omega_+)+\mu A(\Omega_+) \geq m_\mu, \\ 
E(\Omega_-)+\mu A(\Omega_-) \geq m_\mu\;.
\end{eqnarray*}
Adding these two inequalities yields
$$2m_\mu = 2 E(\Omega)+ 2 \mu A(\Omega) \geq 2 m_\mu.$$
Therefore, we have equality everywhere and both $\Omega_+$ and $\Omega_-$ solve the minimization problem.
Moreover they are centrally symmetric (and coincide if $\Omega$ is itself centrally symmetric).
\end{proof}

\begin{Remark}\label{axial}
We emphasize that, thanks to Proposition \ref{P02}, $\Omega$ is locally axially symmetric. Indeed the boundary of $\Omega$ can be decomposed in the union of suitably rotated and translated copies $\gamma_i$ of a symmetric curve. In particular, for each of these copies $\gamma_i$, there exists a point $M_i\in \gamma_i$  such that the curve $\gamma_i$ is symmetric with respect to the line $QM_i$.
\end{Remark}

\begin{Proposition}\label{axial2}
Assume that $\Omega$ is a centrally symmetric minimizer of Problem \eqref{pb2} and locally symmetric with respect to  lines passing by $Q$.
Then $\Omega$ is axially symmetric.

More precisely, up to translation the boundary of $\Omega$ can be decomposed in the union of suitable rotated copies $\gamma_i$ of a symmetric curve $\gamma$, that is 
\begin{equation}\label{bdomega}
\partial\Omega=\cup_{i=1}^p \gamma_i\qquad\text{with}\qquad\gamma_i(s)=\rho_i\gamma(s),\qquad\text{for}\qquad s\in[0,2s_1],
\end{equation}
where $\rho_i$ are rotations of the plane and $2p\, s_1=2\pi$. Moreover $\Omega$ has $p$  axis of symmetry.
\end{Proposition}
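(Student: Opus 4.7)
My plan is to refine Proposition \ref{P02} by exploiting the information that the local axes of symmetry all pass through the fixed point $Q$, in order to identify the rigid motions between the fundamental arcs as rotations centred at $Q$.

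From Proposition \ref{P02} the boundary decomposes as $\partial\Omega=\bigcup_{i=1}^{p}\gamma_i$ into $p$ rigid copies of a symmetric curve of arc-length $2s_1$, with $2ps_1=2\pi$. For each $i$, let $M_i$ denote the midpoint of $\gamma_i$, namely the minimum-curvature point where $k'=0$ and $k=k_m$. By Remark \ref{axial} the axis of symmetry of $\gamma_i$ is the line $\ell_i:=QM_i$.

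The crucial step is to show that the rigid motion $T_i$ sending $\gamma_1$ onto $\gamma_i$ is a rotation centred at $Q$. Since $T_i$ preserves the curvature, it maps the minimum point $M_1$ to $M_i$ and the axis $\ell_1$ onto $\ell_i$. Evaluating \eqref{bht1.4} at each $M_i$, where $k'=0$ and $k=k_m$, yields
\[
\|\overrightarrow{QM_i}\|=\frac{\lambda}{\mu}+\frac{k_m^2}{2\mu},
\]
which is independent of $i$. Hence all midpoints $M_i$ lie on a common circle centred at $Q$; as both $\ell_1$ and $\ell_i$ pass through $Q$, the unique direct isometry realising these correspondences is the rotation $\rho_i$ about $Q$ of angle $\angle M_1 Q M_i$, which by periodicity equals $2\pi(i-1)/p$. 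Placing $Q$ at the origin (this is the ``up to translation'' of the statement) yields the decomposition $\gamma_i(s)=\rho_i\gamma(s)$ with $\gamma=\gamma_1$.

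For the axial symmetry, let $\sigma_i$ denote the reflection across $\ell_i$: it fixes $Q$ and reverses orientation, so $\sigma_i\rho_j\sigma_i=\rho_j^{-1}$, and the family $\{\gamma_j\}$ is permuted by $\sigma_i$; hence $\sigma_i(\partial\Omega)=\partial\Omega$, producing the $p$ axes of symmetry. The main obstacle is the identification of the rotation centre with $Q$: a priori the fact that $T_i(\ell_1)$ is a line through $Q$ does not force $T_i(Q)=Q$, and it is precisely the equi-distance identity supplied by \eqref{bht1.4} at the minimum-curvature points that closes this gap. The central symmetry hypothesis enters to guarantee the global coherence of the fundamental-arc decomposition and that the cyclic group generated by $\rho_2$ acts transitively on the pieces.
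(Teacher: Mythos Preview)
Your argument is essentially correct and takes a genuinely different route from the paper's. The paper exploits the central symmetry directly: for a given arc $\gamma_i$ it finds the centrally opposite arc $\gamma_l$, observes that this pairing forces an axis of reflection through the centre, and then shows that the \emph{endpoints} $\gamma_j(0),\gamma_j(2s_1)$ all lie on the circle of radius $\|\gamma_1(0)\|$ about $Q$, from which the pure-rotation decomposition \eqref{bdomega} follows. You instead work with the \emph{midpoints} $M_i$ and read off directly from the optimality relation \eqref{bht1.4} that $\|\overrightarrow{QM_i}\|=\lambda/\mu+k_m^2/(2\mu)$ is independent of $i$; this pins down the rotation centre without ever invoking the central symmetry. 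Your approach is cleaner and in fact proves more than stated: the central-symmetry hypothesis is essentially unused, so every minimizer (not only a centrally symmetric one) satisfies \eqref{bdomega}. Your closing sentence about central symmetry ``guaranteeing global coherence'' is therefore a bit misleading and could simply be dropped.

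One point deserves to be made explicit. Knowing only $\|\overrightarrow{QM_1}\|=\|\overrightarrow{QM_i}\|$ and $T_i(\ell_1)=\ell_i$ still leaves the possibility $T_i(Q)=2M_i-Q$ (the reflection of $Q$ through $M_i$). What rules this out is that \eqref{bht1.4} gives not merely the magnitude but the \emph{direction}: at $M_i$ one has $\overrightarrow{QM_i}=\bigl(\lambda/\mu+k_m^2/(2\mu)\bigr)\nn(M_i)$, a positive multiple of the outer normal. Since the direct isometry $T_i$ carries $\nn(M_1)$ to $\nn(M_i)$, it sends the ray $M_1+\mathbb{R}_-\nn(M_1)$ (which contains $Q$) to the ray $M_i+\mathbb{R}_-\nn(M_i)$ (which again contains $Q$), and together with the distance equality this forces $T_i(Q)=Q$. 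In the case with segments the same conclusion holds with $k_m=0$, using \eqref{bht0.8} and the local symmetry of the segment about $\ell_i$ to see that $\overrightarrow{QM_i}=(\lambda/\mu)\,\nn(M_i)$ at the midpoint of the segment. With this small addition your proof is complete.
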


\begin{proof}
Thanks to Proposition \ref{P02}, $\Omega$ is locally axially symmetric and $\partial\Omega$ can be decomposed in the union of suitable rotated and translated copies $\gamma_i$ of a symmetric curve $\gamma$, that is $\partial\Omega=\cup_{i=1}^p (\rho_i\gamma(s)+b_i)$, for planar rotations $\rho_i$ and vectors $b_i$. 
In particular, for each of these copies $\gamma_i$, there exists a point $M_i\in \gamma_i$  such that the curve $\gamma_i$ is symmetric with respect to the line $QM_i$; $M_i=\gamma(s_1)$. 
Up to translations we can assume that $Q$ coincides with the origin. 

Fix $i\in\{1,...,p\}$ and let us consider the curve $\gamma_i$. 
Since $\Omega$ is centrally symmetric and $\gamma$ is symmetric with respect to the point $\gamma(s_1)$ there exists a corresponding index $l\in\{1,...,p\}$ such that the curves $\gamma_i$ and $\gamma_l$ are axially symmetric with respect to the line $\gamma_i(s_1)\gamma_l(s_1)$, through the origin (we can assume this line to be the axis $\{x=0\}$). 
Since this property holds true for each $j\in\{1,...,p\}$, we have that the points $\gamma_j(0),\gamma_j(2s_1)$ belongs to a common circle of radius $\|\gamma_1(0)\|$.
This entails that the decomposition in (\ref{bdomega}) holds true.

More precisely by the symmetry of the curve $\gamma$ (and hence that of $\gamma_j$), the curves $\gamma_{i-1}$ and $\gamma_{i+1}$ are axially symmetric too, since it holds $\gamma_{i-1}(2s_1)=\gamma_i(0)=(x_i(0),y_i(0))=(-x_i(2s_1),y_i(2s_1))=(-x_{i+1}(0),y_{i+1}(0))$, where $(x_j(s),y_j(s))$ denotes the point $\gamma_j(s)$ and $\gamma_{i-1}(0)$,$\gamma_{i+1}(2s_1)$ both belong to the common circle.
\end{proof}

\subsection{Segments}
We are interested in the analysis of existence of segments for minimizers of Problem \eqref{pb2}, i.e. non-empty intervals $(a,b)$ of $[0,2\pi]$ such that
$k(s)=0$ on $(a,b)$.
\begin{Lemma}\label{lem1}
Assume that $\Omega$ is a centrally symmetric minimizer of Problem \eqref{pb2}.
If $\partial \Omega$ has at least one segment, then
\begin{equation}\label{ve9.9}
E(\Omega)\leq \mu A(\Omega) \leq 2E(\Omega).
\end{equation}
\end{Lemma}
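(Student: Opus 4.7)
The plan is to recast the target inequalities in terms of the Lagrange multiplier $\lambda$ and then exploit admissible variational perturbations adapted to the segment. Integrating the pointwise optimality relation $\langle\overrightarrow{QM},\nn\rangle=\lambda/\mu+k^2/(2\mu)$ from Proposition~\ref{P01} over $\partial\Omega$ gives the consistency identity $2\mu A=2\pi\lambda+E$, which transforms $E\leq\mu A\leq 2E$ into the equivalent form
\[
\frac{E}{2\pi}\leq\lambda\leq\frac{3E}{2\pi}.
\]

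First I would verify that the point $Q$ appearing in Theorem~\ref{Ththeta} coincides with the center of symmetry of $\Omega$: applying the pointwise relation at $M$ and at its antipode $M'$ (where $k$ is the same and $\nn$ flips sign) and subtracting the two relations forces $Q=(M+M')/2$. Hence $Q$ is interior to $\Omega$, and on a segment (where $k=0$) the optimality relation reduces to $\langle\overrightarrow{QM},\nn\rangle=\lambda/\mu$, the positive distance from $Q$ to the segment line, which already yields $\lambda>0$. The other main analytical input is the integral identity $\int_{\partial\Omega}k^3\,ds=4\pi(\mu-\lambda)$, obtained by integrating the ODE \eqref{ve0.1} on the strictly convex components and using the jump $k'(b^-)=-\mu L/2$ at each transition from Proposition~\ref{P02}; this is to be combined with the standard identities $\int k\,ds=2\pi$ and $\int k^2\,ds=2E$.

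For the quantitative bounds on $\lambda$ I would test the minimality of $\Omega$ against two one-parameter admissible families of perturbations exploiting the segment. For the upper bound $\lambda\leq 3E/(2\pi)$: bend one segment outward into a circular arc of large radius $R_c$, with a compensating infinitesimal homothety to preserve $P=2\pi$. For the lower bound $\lambda\geq E/(2\pi)$: the complementary perturbation slightly shortens the segment by moving its two transition points toward each other along the ODE~\eqref{ve0.1}, again with a homothety correction. In each case the first-order non-negativity $d(E+\mu A)\geq 0$, combined with the consistency identity to eliminate $\mu$ in favor of $E$, $A$, and $\lambda$, produces the claimed bound. The main obstacle is the careful bookkeeping of the first-order expansions: the segment parameters $L$ and the number $N$ of segments appear in every intermediate term but must cancel against the three integral identities above, leaving only the invariants $E$ and $A$ and hence the precise factors $1$ and $2$ in the final inequality.
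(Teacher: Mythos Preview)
Your reformulation via $\lambda$ is correct, and the identification $Q=$ center of symmetry is fine, but the two perturbation arguments you sketch do not deliver the bounds. For the upper bound you propose bending a segment outward into a shallow arc. On a segment the convexity constraint $k\geq 0$ is \emph{active}, so this is a one-sided perturbation: to first order in the curvature $\varepsilon=1/R_c$ one has $\Delta E=O(\varepsilon^2)$, $\Delta P=O(\varepsilon^2)$ and $\Delta A=cL^3\varepsilon>0$, so after the homothety the first-order inequality $d(E+\mu A)\geq 0$ reduces to $\mu\cdot cL^3\varepsilon\geq 0$, which is content-free. Bending inward (the direction that would give information) is inadmissible. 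For the lower bound, ``moving the transition points along the ODE \eqref{ve0.1}'' is problematic: past $s=b$ the ODE solution has $k<0$ (since $k'(b^-)<0$ by \eqref{bht5.7b}), hence leaves the admissible class. If instead you mean simply shortening the segment, that is the $\varepsilon<0$ branch of the \emph{same} two-sided perturbation as lengthening it; at a minimizer the first-order variation then vanishes identically and the single second-order condition it produces cannot give both inequalities. In short, ``first-order non-negativity'' is the wrong mechanism here.

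The paper treats the two inequalities by completely different, and much shorter, arguments. For $\mu A\leq 2E$ it uses exactly the segment-lengthening perturbation with homothety, but computes to \emph{second} order: the first-order term cancels (as it must) and the $\varepsilon^2$-coefficient is $\pi^{-2}(2E-\mu A)\geq 0$. For $E\leq\mu A$ it uses no perturbation at all, only geometry: central symmetry forces two parallel segments, so $\Omega$ lies in a strip of width $2\lambda/\mu=(2\mu A-E)/(\pi\mu)$; combined with $\operatorname{diam}(\Omega)\leq\pi$ this gives $A\leq\pi\cdot(2\mu A-E)/(\pi\mu)$, i.e.\ $E\leq\mu A$. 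Neither your $\int k^3$ identity nor any cancellation of $L$ and $N$ is needed.
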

\begin{proof}
Assume that $\partial \Omega$ has at least one segment. Since $\Omega$ has a center of symmetry, $\partial \Omega$ has at least two parallel segments hence
$\Omega$ is contained in the infinite strip corresponding to the two segments. By \eqref{first-opt-geom}, the width of this strip is
$$
\frac{2\mu A(\Omega) -E(\Omega)}{\pi\mu}.
$$

As a consequence,
$$
A(\Omega)\leq \diam(\Omega) \frac{2\mu A(\Omega) -E(\Omega)}{\pi\mu}\leq 2A(\Omega) -\frac{E(\Omega)}{\mu},
$$
which implies the first inequality in \eqref{ve9.9}.

In order to prove the second inequality, we make a perturbation $\Omega_\varepsilon$ of $\Omega$. First we increase the size of a segment by $\varepsilon$ and we modify in a symmetric way the opposite segment.
Then we perform an homothety of center $Q$ and ratio $1/(1+\varepsilon/\pi)$ so that the perimeter of  $\Omega_\varepsilon$  remains equal to $2\pi$.

The domain $\Omega_\varepsilon$ satisfies
\begin{multline*}
E(\Omega_\varepsilon) +\mu A(\Omega_\varepsilon)=E(\Omega)\left(1+\frac{\varepsilon}{\pi}\right)
+\mu \left( A(\Omega)+\frac{2\mu A(\Omega) -E(\Omega)}{\pi\mu}\varepsilon \right)\frac{1}{\left(1+\dfrac{\varepsilon}{\pi}\right)^2}
\\
=E(\Omega) +\mu A(\Omega)+\frac{\varepsilon^2}{\pi^2} \left(2 E(\Omega) - \mu A(\Omega)\right) + o(\varepsilon^2).
\end{multline*}
This ends the proof since $E(\Omega_\varepsilon) +\mu A(\Omega_\varepsilon)\ge E(\Omega) +\mu A(\Omega).$
\end{proof}

\begin{Lemma}\label{Lem18:48}
For any $\mu>1$, the following inequalities hold for any optimal domain  $\Omega$.
\begin{equation}\label{bht18:35}
2\pi \sqrt{\mu} \leq E(\Omega)+\mu A(\Omega) \leq 3\pi \sqrt{\mu}-\pi.
\end{equation}
\end{Lemma}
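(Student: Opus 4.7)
The plan is to prove the two inequalities separately: the lower bound follows from Gage's inequality and AM--GM, while the upper bound follows by testing against an explicit stadium-shaped competitor.

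\textbf{Lower bound.} Since $P(\Omega)=2\pi$, Gage's inequality (Theorem \ref{main-th1}) gives
\[
E(\Omega)\,A(\Omega)\;\ge\;\tfrac{\pi}{2}\,P(\Omega)\;=\;\pi^{2}.
\]
Applying the AM--GM inequality to the two positive numbers $E(\Omega)$ and $\mu A(\Omega)$,
\[
E(\Omega)+\mu A(\Omega)\;\ge\;2\sqrt{\mu\,E(\Omega)\,A(\Omega)}\;\ge\;2\pi\sqrt{\mu}.
\]
This step is short and robust; it works for every $\mu\ge 0$ and any convex $\Omega$ with $P(\Omega)=2\pi$, without optimality.

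\textbf{Upper bound.} Here I would exploit the optimality of $\Omega$ by comparing $E(\Omega)+\mu A(\Omega)$ with the value of $E+\mu A$ on a carefully chosen admissible competitor, namely a \emph{stadium} $S_r$ consisting of two parallel segments of length $L=\pi(1-r)$ joined by two semicircles of radius $r$. Its perimeter is $2L+2\pi r=2\pi$, so $S_r\in\mathcal{C}$ as soon as $r\in(0,1]$; its boundary is $C^{1}$ and piecewise $C^{2}$. A direct computation yields
\[
A(S_r)=2rL+\pi r^{2}=2\pi r-\pi r^{2},\qquad E(S_r)=\tfrac{1}{2}\int_{\partial S_r}k^{2}\,ds=\tfrac{1}{2}\cdot\tfrac{1}{r^{2}}\cdot 2\pi r=\tfrac{\pi}{r}.
\]
Choosing $r=1/\sqrt{\mu}$, which is admissible precisely because $\mu>1$ ensures $r<1$, we get
\[
E(S_r)+\mu A(S_r)=\pi\sqrt{\mu}+\mu\!\left(\tfrac{2\pi}{\sqrt{\mu}}-\tfrac{\pi}{\mu}\right)=3\pi\sqrt{\mu}-\pi.
\]
Since $\Omega$ minimises $E+\mu A$ over $\mathcal{C}$, we conclude $E(\Omega)+\mu A(\Omega)\le E(S_r)+\mu A(S_r)=3\pi\sqrt{\mu}-\pi$.

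\textbf{Expected obstacle.} There is no real analytical difficulty; the only subtlety is the choice of the competitor. One must guess the right scaling $r\sim 1/\sqrt{\mu}$, which is natural because for large $\mu$ the optimum is expected to look like a thin elongated region where the area term and the curvature term balance at the order $\sqrt{\mu}$. The role of the hypothesis $\mu>1$ is exactly to guarantee that this stadium is admissible (i.e.\ that $L=\pi(1-1/\sqrt{\mu})\ge 0$); for $\mu\le 1$ one would have to use a different test shape, but the statement is restricted to $\mu>1$ so this is not needed here.
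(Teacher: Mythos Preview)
Your proof is correct and follows essentially the same approach as the paper: Gage's inequality combined with AM--GM for the lower bound, and comparison with the stadium of cap radius $1/\sqrt{\mu}$ for the upper bound. The only cosmetic difference is that the paper first replaces $A$ by $\pi^2/E$ via Gage and then applies AM--GM to $E+\mu\pi^2/E$ (invoking $E\ge\pi$ and $\mu>1$ along the way), whereas you apply AM--GM directly to $E$ and $\mu A$; your ordering is slightly cleaner and, as you note, makes the lower bound valid for all $\mu\ge 0$ without needing those auxiliary facts.
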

\begin{proof}
Thanks to Theorem \ref{main-th1} by Gage \cite{Gage},
$$
E(\Omega)A(\Omega)\geq \pi^2.
$$
On the other hand relation \eqref{minper} entails a lower bound for the elastic energy: $E(\Omega)\geq \pi$.
As a consequence
$$
E(\Omega)+\mu A(\Omega)\geq E(\Omega)+\mu \frac{\pi^2}{E(\Omega)}\geq 2\pi \sqrt{\mu}
$$
by using that $\mu>1$.

To prove the second inequality, we consider the admissible stadium $\Omega_S$ composed by a rectangle of lengths $2/\sqrt{\mu}$ and $\pi(1-1/\sqrt{\mu})$ and by two half disks of radius
$1/\sqrt{\mu}$. For this stadium, $P(\Omega_S)=2\pi,$ and
$$
E(\Omega_S)+\mu A(\Omega_S)=\pi \sqrt{\mu}+\mu \left( \frac{\pi}{\mu} + \frac{2}{\sqrt{\mu}} \pi (1-\frac{1}{\sqrt{\mu}} ) \right)
=3\pi \sqrt{\mu}-\pi.
$$
\end{proof}

\begin{Theorem}\label{Thm0712}
Assume that $\Omega$ is a minimizer of Problem \eqref{pb2} with a center of symmetry.
Then $\partial \Omega$ contains either $0$ or $2$ segments.
\end{Theorem}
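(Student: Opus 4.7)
The plan is to argue by contradiction: assume $\partial\Omega$ has $n\geq 4$ segments (which, by central symmetry, is the first case to rule out beyond $n\in\{0,2\}$). The idea is that having at least four segments forces $\Omega$ inside a small regular polygon, while Lemma~\ref{lem1} combined with the optimality condition forces its area to be large; these two will be incompatible.

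First I would use Remark~\ref{axial} (a consequence of Proposition~\ref{P02}) together with Proposition~\ref{axial2} to deduce that $\Omega$ has $n$ axes of symmetry through $Q$, and hence $n$-fold rotational symmetry. Thus the segments come in $n/2\geq 2$ pairs of opposite parallel ones, with normals equispaced at angular distance $2\pi/n$. Applying the optimality condition~\eqref{first-opt-geom} at a segment (where $k=0$) yields $\langle\overrightarrow{QM},\nn\rangle=\lambda/\mu$, so each parallel pair bounds a strip of width $2\lambda/\mu$. The $n/2$ congruent strips intersect in a regular $n$-gon of inradius $\lambda/\mu$; since $\Omega$ lies inside it, and since $n\mapsto n\tan(\pi/n)$ is maximized on integers $n\geq 4$ at $n=4$ with value $4$,
\begin{equation*}
A(\Omega)\leq n\left(\frac{\lambda}{\mu}\right)^2\tan\frac{\pi}{n}\leq \frac{4\lambda^2}{\mu^2}.
\end{equation*}

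To conclude, I would use $\lambda=(2\mu A(\Omega)-E(\Omega))/(2\pi)$ from~\eqref{bht5.4} to rewrite this bound as $\pi\mu\sqrt{A(\Omega)}\leq 2\mu A(\Omega)-E(\Omega)$, and then invoke the upper bound $\mu A(\Omega)\leq 2E(\Omega)$ of Lemma~\ref{lem1} to obtain $\pi\mu\sqrt{A(\Omega)}\leq \tfrac{3}{2}\mu A(\Omega)$, i.e., $A(\Omega)\geq 4\pi^2/9$. However, the isoperimetric inequality with $P(\Omega)=2\pi$ forces $A(\Omega)\leq\pi$, and the resulting inequality $4\pi^2/9\leq\pi$ is equivalent to $\pi\leq 9/4$, a contradiction. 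The main (minor) obstacle is the first step's bookkeeping: one must identify the number ``$p$'' of fundamental pieces in Proposition~\ref{axial2} with the number of segments of $\partial\Omega$, which works because each fundamental piece, being symmetric about its midpoint, contains exactly one segment when segments are present.
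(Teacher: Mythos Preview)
Your argument is correct and, in fact, noticeably cleaner than the paper's own proof. Both proofs begin with the same geometric observation: by Proposition~\ref{P02} and Proposition~\ref{axial2}, a centrally symmetric minimizer with $n\geq 4$ segments has $n$-fold dihedral symmetry about $Q$, and since each segment lies at distance $\lambda/\mu$ from $Q$ (by \eqref{first-opt-geom} with $k=0$), $\Omega$ is contained in a regular $n$-gon of inradius $\lambda/\mu$. From here the two proofs diverge. The paper compares \emph{perimeters}, obtaining $2\pi\leq 4N(\lambda/\mu)\tan(\pi/2N)$, and then feeds in the two-sided estimate of Lemma~\ref{Lem18:48} together with Lemma~\ref{lem1} to squeeze $\mu$ between incompatible bounds; this forces $N\leq 2$, after which the case $N=2$ is eliminated by a further refinement using $A(\Omega)\leq (2\lambda/\mu)^2$. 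You instead compare \emph{areas} directly, bounding $A(\Omega)\leq n(\lambda/\mu)^2\tan(\pi/n)\leq 4(\lambda/\mu)^2$ uniformly for $n\geq 4$, and then a single application of the second inequality in Lemma~\ref{lem1} plus the isoperimetric inequality gives the contradiction $\pi\leq 9/4$.

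What your route buys is economy: you avoid Lemma~\ref{Lem18:48} (the stadium comparison) entirely, you need no case splitting on $N$, and you never have to track explicit numerical ranges for $\mu$. The bookkeeping you flag---that the number $p$ of fundamental pieces in Proposition~\ref{axial2} equals the number of segments---is indeed handled by the Cauchy--Lipschitz symmetry in the proof of Proposition~\ref{P02}: each period of $k$ runs from one maximum to the next and contains exactly one segment.
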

\begin{proof}
Assume that the boundary of $\Omega$ contains $m$ segments, hence thanks to Proposition \ref{P02}, $m=2N$ and suppose $m\ge4$ (that is $N\geq 2$). 
By Theorem \ref{T01} and Proposition \ref{axial2} the set $\Omega$ is contained in the union of $2N$ copies of isosceles triangles with common vertex in $Q$, height equal to $\lambda /\mu$, angle at the vertex equal to $ {\pi}/N$, moreover by its convexity $\Omega$ is contained in a regular $2N$-gon of inradius $\lambda /\mu$.

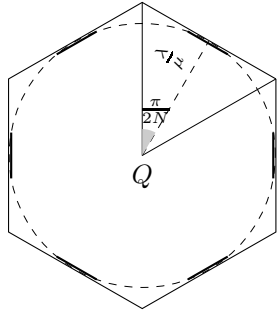
\begin{figure}[h]
\centering
\begin{tikzpicture}[x=0.35mm,y=0.35mm]
\draw[very thin, dashed] (0,0) circle (50);
\draw[thick] (50:50)--(70:50) ;\draw[thick] (110:50)--(130:50);\draw[thick] (170:50)--(190:50);\draw[thick] (230:50)--(250:50);\draw[thick] (290:50)--(310:50);\draw[thick] (350:50)--(370:50);
\draw[thin] (30:58)--(90:58)--(150:58)--(210:58)--(270:58)--(330:58)--(30:58);
\draw[very thin] (0,0)node[below]{$Q$}--(30:58);\draw[very thin] (0,0)--(90:58);
\draw[very thin, dashed] (0,0)--(60:50)node[near end,sloped, above]{\small$\frac{\lambda}{\mu}$};
\fill[gray!40] (0,0)--(60:10) arc(60:90:10)--cycle;\draw (60:10)node[above]{\small$\frac{\pi}{2N}$};
\end{tikzpicture}
\caption{The set $\Omega$ is contained in a regular $2N$-gon of inradius $\lambda /\mu$}
\end{figure}


Hence, comparing the perimeters, we deduce that 

\begin{equation}\label{bht18:33}
2\pi \leq \frac{\lambda}{\mu} 4N \tan\left( \frac{\pi}{2N}\right).
\end{equation}

On the other hand, we deduce from \eqref{bht18:35} and \eqref{ve9.9}
\begin{equation}\label{bht1.6}
E(\Omega)\geq \frac{2\pi \sqrt{\mu}}{3}.
\end{equation}
Combining again  \eqref{bht18:35} and \eqref{ve9.9} we obtain
\begin{equation}\label{bht2.7}
\frac{3}{2}\mu A(\Omega)\leq E(\Omega)+\mu A(\Omega)\leq 3\pi \sqrt{\mu}-\pi,
\end{equation}
and thus
\begin{equation}\label{bht1.7}
\mu A(\Omega)\leq 2\pi \sqrt{\mu}-\frac 23 \pi.
\end{equation}
Relations \eqref{bht1.6} and \eqref{bht1.7} imply
\begin{equation}\label{bht3.0}
\frac{\lambda}{\mu}=\frac{2\mu A(\Omega) -E(\Omega)}{2\pi\mu}\leq \frac{5}{3\sqrt{\mu}} -\frac{2}{3\mu}.
\end{equation}

On the other hand, since $E(\Omega)\geq \pi$ and $A(\Omega)\leq \pi$,
\begin{equation}\label{bht9.0}
\frac{\lambda}{\mu}=\frac{2\mu A(\Omega) -E(\Omega)}{2\pi\mu}\leq 1-\frac{1}{2\mu}.
\end{equation}

Consequently, by \eqref{bht18:33} and \eqref{bht3.0}
\begin{equation}\label{bht2.0}
\mu \leq \left(\frac{10 N}{3\pi} \tan\left( \frac{\pi}{2N}\right)\right)^2
\end{equation}
and by \eqref{bht18:33} and \eqref{bht9.0}
\begin{equation}\label{bht2.1}
\frac{1}{2\left(1- \dfrac{\pi}{2N} \cotan\left( \dfrac{\pi}{2N}\right)\right) } \leq  \mu.
\end{equation}
We can notice that the sequences
$$
\left\{ \left(\frac{10n}{3\pi} \tan\left( \frac{\pi}{2n}\right)\right)^2 \right\}_{n\geq 2}, \qquad
\left\{ \frac{1}{2\left(1- \dfrac{\pi}{2n} \cotan\left( \dfrac{\pi}{2n}\right)\right) } \right\}_{n\geq 2}
$$
are decreasing and increasing respectively and since for $n=3$,
$$
\left(\frac{10}{\pi} \tan\left( \frac{\pi}{6}\right)\right)^2 < \frac{1}{2\left(1- \dfrac{\pi}{6} \cotan\left( \dfrac{\pi}{6}\right)\right) }
$$
we deduce from \eqref{bht2.0} and from \eqref{bht2.1} that $N\leq 2$.

We also deduce that if $N=2$, then
$$
2.3\leq \mu \leq 4.6.
$$

If $N=2$, then we deduce that
\begin{equation}\label{plus0.0}
A(\Omega)\leq \left( 2\frac{\lambda}{\mu} \right)^2.
\end{equation}
Using \eqref{bht3.0}, we deduce 
\begin{equation}\label{plus0.0}
A(\Omega)\leq 4 \left(   \frac{5}{3\sqrt{\mu}} -\frac{2}{3\mu} \right)^2.
\end{equation}
The above relation and \eqref{bht1.6}  imply
\begin{equation}\label{bht3.0plus}
\frac{\lambda}{\mu}=\frac{2\mu A(\Omega) -E(\Omega)}{2\pi\mu}\leq  \frac{4}{\pi} \left(   \frac{5}{3\sqrt{\mu}} -\frac{2}{3\mu} \right)^2-\frac{1}{3\sqrt{\mu}}.
\end{equation}
Since \eqref{bht18:33} writes
$$
\frac{\lambda}{\mu} \geq \frac{\pi}{4}
$$
we deduce from \eqref{bht3.0plus} that
\begin{equation}\label{plus0.1}
\frac{\pi}{4}\leq \frac{4}{\pi} \left(   \frac{5}{3\sqrt{\mu}} -\frac{2}{3\mu} \right)^2-\frac{1}{3\sqrt{\mu}}.
\end{equation}
That yields 
$$
\mu<2.
$$

\end{proof}


\begin{Lemma}
Assume that $\Omega$ is a minimizer of Problem \eqref{pb2}.
 For $\mu\geq 1$, we have
\begin{equation}\label{minl0}
\lambda \geq \frac{\sqrt{1+16\mu}-1}{4} - \sqrt{\frac{\mu}{2}},
\end{equation}
and
\begin{equation}\label{bht4.1}
A(\Omega) \geq \frac{\pi}{4\mu}\left(\sqrt{1+16\mu} - 1\right).
\end{equation}
\end{Lemma}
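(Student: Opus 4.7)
The two inequalities are linked by the identity $2\pi\lambda=2\mu A(\Omega)-E(\Omega)$ from \eqref{bht5.4}: any lower bound on $A(\Omega)$ combined with an upper bound on $E(\Omega)$ yields a lower bound on $\lambda$. My plan is therefore to prove \eqref{bht4.1} first and then to deduce \eqref{minl0}.

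For \eqref{bht4.1} I would split into two cases according to whether $\partial\Omega$ contains a segment. If $\partial\Omega$ has a segment, Lemma \ref{lem1} gives $E(\Omega)\leq \mu A(\Omega)$; combined with Gage's inequality $E(\Omega)A(\Omega)\geq \pi^2$ this yields $\mu A(\Omega)^2\geq \pi^2$, hence $A(\Omega)\geq \pi/\sqrt{\mu}$, and squaring $4\sqrt{\mu}+1\geq \sqrt{1+16\mu}$ (equivalent to $8\sqrt{\mu}\geq 0$) shows this is stronger than the desired $\pi(\sqrt{1+16\mu}-1)/(4\mu)$. If $\partial\Omega$ is strictly convex I would use the ODE $k''=-\tfrac{1}{2}k^3-\lambda k+\mu$ from Proposition \ref{P01}: integrating over $[0,2\pi]$ together with $\int k\,ds=2\pi$ gives the identity $\int_0^{2\pi}k^3\,ds=4\pi(\mu-\lambda)$, and Cauchy--Schwarz $(\int k^2)^2\leq \int k\cdot\int k^3$ produces $E(\Omega)\leq \pi\sqrt{2(\mu-\lambda)}$. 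Combining with Gage's inequality and $\lambda=(2\mu A-E)/(2\pi)$ should then yield \eqref{bht4.1} after algebra.

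Once \eqref{bht4.1} is established, \eqref{minl0} follows from the chain
\[
2\pi\lambda=2\mu A(\Omega)-E(\Omega)\geq \tfrac{\pi}{2}(\sqrt{1+16\mu}-1)-\pi\sqrt{2\mu},
\]
where the first estimate uses \eqref{bht4.1} and the second uses $E(\Omega)\leq \pi\sqrt{2(\mu-\lambda)}\leq \pi\sqrt{2\mu}$ coming from the ODE together with the auxiliary fact $\lambda\geq 0$; the latter is itself obtained by substituting $E\leq \pi\sqrt{2\mu}$ into the Gage-based lower bound $\lambda\geq \mu\pi/E-E/(2\pi)$ derived from $E\geq \pi^2/A$ and $A=(E+2\pi\lambda)/(2\mu)$. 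Dividing by $2\pi$ gives exactly \eqref{minl0}. The main obstacle is the strictly convex subcase of \eqref{bht4.1}: the naive combination of the ODE bound and Gage only produces $A(\Omega)\geq \pi/\sqrt{2\mu}$, which for $\mu>1/2$ is strictly weaker than $\pi(\sqrt{1+16\mu}-1)/(4\mu)$. Closing this gap seems to require either a sharper integral identity from the ODE (for instance from multiplying by $k$ and integrating by parts, giving $\int (k')^2\,ds=\tfrac{1}{2}\int k^4\,ds+2\lambda E-2\pi\mu$) or an argument ruling out strictly convex non-disk minimizers in the regime $\mu\geq 1$, which would reduce this subcase to the disk where $A(\Omega)=\pi\geq \pi(\sqrt{1+16\mu}-1)/(4\mu)$ holds trivially for $\mu\geq 1/2$.
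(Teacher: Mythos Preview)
You have correctly derived the identity $\int_0^{2\pi}k^3\,ds=4\pi(\mu-\lambda)$ and the Cauchy--Schwarz bound $E(\Omega)\le\pi\sqrt{2(\mu-\lambda)}$, and your treatment of the segment case via Lemma~\ref{lem1} and Gage is fine. You also correctly identify the gap: in the strictly convex case, Gage's inequality together with $E\le\pi\sqrt{2\mu}$ only yields $A(\Omega)\ge\pi/\sqrt{2\mu}$, which is strictly weaker than \eqref{bht4.1}. Neither of your two proposed remedies (a further identity from multiplying the ODE by $k$, or ruling out strictly convex non-disk minimizers for $\mu\ge 1$) is what closes the gap.

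The missing ingredient is the Green--Osher inequality \cite{Gre-Osh}, valid for any regular convex domain:
\[
\int_0^{2\pi}k^3\,ds\ \ge\ \frac{P(\Omega)^2\pi-2\pi^2 A(\Omega)}{A(\Omega)^2}\ =\ \frac{4\pi^3-2\pi^2 A(\Omega)}{A(\Omega)^2}.
\]
Combining this with your identity $\int k^3=4\pi(\mu-\lambda)$ and with $\lambda\ge 0$ gives the quadratic inequality $\mu A(\Omega)^2+\tfrac{\pi}{2}A(\Omega)-\pi^2\ge 0$, whose positive root is exactly $\tfrac{\pi}{4\mu}(\sqrt{1+16\mu}-1)$; this is \eqref{bht4.1}. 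Your derivation of \eqref{minl0} from \eqref{bht4.1} and $E\le\pi\sqrt{2\mu}$ is then correct as written.

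Two remarks on packaging. First, the paper avoids the case split: it obtains $\int k^3=4\pi(\mu-\lambda)$ by multiplying the geometric optimality condition \eqref{first-opt-geom} by $k$ and integrating over all of $\partial\Omega$ (the contribution on segments vanishes since $k=0$ there), so the ODE is not needed and Green--Osher applies directly. Second, your route to $E\le\pi\sqrt{2\mu}$ via the ODE is only immediately valid in the strictly convex case; using \eqref{first-opt-geom} instead makes the bound hold uniformly, which you need for the final chain leading to \eqref{minl0}.
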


\begin{proof}
Multiplying the optimality condition \eqref{first-opt-geom} by $k$ and integrating on
the boundary yields
\begin{equation}\label{minl1}
2\pi\mu = \mu \int_0^{2\pi} k \langle\overrightarrow{QM}, \nn\rangle \, ds = 2\pi \lambda + \frac{1}{2}\int_0^{2\pi} k^3\,ds.
\end{equation}
The Cauchy-Schwarz inequality and the previous equality (\ref{minl1}) give
\begin{equation}\label{minl2}
(2E(\Omega))^2=\left(\int_0^{2\pi} k^2 ds\right)^2\leq \int_0^{2\pi} k \, ds\; \int_0^{2\pi} k^3 \, ds =
8\pi^2(\mu - \lambda)\leq 8\pi^2\mu,
\end{equation}
the last inequality coming from the fact that ${\lambda}$ is necessarily positive if $\mu\geq 1$. Therefore, from (\ref{minl2}) we have
\begin{equation}\label{minl3}
E(\Omega)\leq \pi\sqrt{2\mu}.
\end{equation}
We use the Green-Osher inequality, valid for any (regular) convex domain, see \cite{Gre-Osh} and we obtain
\begin{equation}\label{minl4}
\int_0^{2\pi} k^3\,ds \geq \frac{P(\Omega)^2\pi -2A(\Omega)\pi^2}{A(\Omega)^2}.
\end{equation}
Plugging \eqref{minl4} into \eqref{minl1}, we obtain
\begin{equation}\label{bht4.0}
4\pi(\mu-\lambda)=\int_0^{2\pi} k^3\,ds\geq \frac{4\pi^3 -2A(\Omega)\pi^2}{A(\Omega)^2}
\end{equation}
and hence
\begin{equation}\label{minl5}
\mu A(\Omega)^2 + \frac{\pi}{2} A(\Omega) -\pi^2 \geq (\mu - \lambda) A(\Omega)^2 + \frac{\pi}{2} A(\Omega) -\pi^2 \geq 0.
\end{equation}
Considering the sign of the polynomial, this implies
\begin{equation}\label{minl6}
\mu A(\Omega) \geq \frac{\pi}{4}\left(\sqrt{1+16\mu} - 1\right).
\end{equation}
The proof concludes by using \eqref{bht5.4}, (\ref{minl3}) and (\ref{minl6}).
\end{proof}

Let us now prove that, for sufficiently large $\mu$, the optimal domains are not strictly convex.
\begin{Proposition}\label{segmu}
If $\mu > 47.7750$, then the boundary of an optimal domain $\Omega$ contains segments.
\end{Proposition}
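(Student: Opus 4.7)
The proof proceeds by contradiction. Suppose that $\Omega$ is an optimal domain whose boundary contains no segment, so that the curvature $k(s)$ is strictly positive on all of $\partial\Omega$. By Theorem \ref{T01} we may assume $\Omega$ is centrally symmetric with center $Q$, and the optimality relation \eqref{first-opt-geom},
\[
\langle\overrightarrow{QM(s)},\nn(s)\rangle=\frac{\lambda}{\mu}+\frac{k(s)^2}{2\mu},
\]
holds on the entire boundary. In particular the curvature $k$ satisfies the autonomous ODE \eqref{ve0.1} everywhere and is a smooth positive periodic function.

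First, I would gather the bounds already available: $E(\Omega)\leq \pi\sqrt{2\mu}$ from \eqref{minl3}, $\mu A(\Omega)\geq \tfrac{\pi}{4}(\sqrt{1+16\mu}-1)$ from \eqref{bht4.1}, $E(\Omega)+\mu A(\Omega)\leq 3\pi\sqrt\mu-\pi$ from the stadium comparison in Lemma \ref{Lem18:48}, and Gage's inequality $E(\Omega)A(\Omega)\geq \pi^2$. These together already pin $E$ and $A$ to intervals of width $O(\sqrt\mu)$ around $\pi\sqrt\mu$ and $\pi/\sqrt\mu$ respectively, but are not by themselves incompatible.

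The crux is to leverage the no-segment hypothesis through the curvature ODE \eqref{ve0.1}. Multiplying $k''=-k^3/2-\lambda k+\mu$ by $k'$ and integrating yields the first integral $(k')^2=-k^4/4-\lambda k^2+2\mu k+2C$, so the extrema $k_m,k_M$ of $k$ are among the roots of the quartic $k^4+4\lambda k^2-8\mu k-8C=0$, with $k_m>0$ forced by strict convexity. Combined with $\int_0^{2\pi}k\,ds=2\pi$ (which gives $k_m\leq 1\leq k_M$ since $P=2\pi$) and $\int_0^{2\pi}k^3\,ds=4\pi(\mu-\lambda)$ from \eqref{minl1}, these constraints restrict the four parameters $(\mu,\lambda,k_m,k_M)$ to an algebraic curve. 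The closure condition, that the $s$-period of $k$ equals $2\pi/p$ for some even integer $p\geq 2$ (central symmetry), adds a transcendental equation via the Jacobi elliptic representation of Lemma \ref{lemmacn}. Substituting the resulting expressions for $E$ and $A$ in terms of complete elliptic integrals into the stadium bound $E+\mu A\leq 3\pi\sqrt\mu-\pi$ produces an explicit inequality in $\sqrt\mu$ whose failure for $\mu>47.7750$ gives the contradiction.

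I expect the main obstacle to be the sharp numerical threshold: the generic Gage–Osher lower bound $E+\mu A\geq 2\pi\sqrt\mu$ is far from tight, so strict convexity must be used quantitatively through the elliptic integrals, with a case analysis on the number of oscillations $p\geq 2$. The least oscillatory case $p=2$ should drive the bound, and $47.7750$ should then emerge as the critical value of $\mu$ at which a strictly convex $D_2$-symmetric critical point of $j_\mu$ ceases to beat the stadium.
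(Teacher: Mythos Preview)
Your strategy diverges substantially from the paper's, and the proposal as written has a real gap. The paper never touches the elliptic-integral representation of Lemma~\ref{lemmacn} for this proposition; instead it exploits a single algebraic observation you have overlooked. Writing the first integral as $(k')^2/2=\mathscr P(k)$ with $\mathscr P(z)=-z^4/8-\lambda z^2/2+\mu z+C$ and $C=k_M^4/8+\lambda k_M^2/2-\mu k_M$, one sees that $\mathscr P$ is concave (since $\lambda>0$), hence has exactly two real roots $k_m\le k_M$. The boundary contains a segment precisely when $k_m<0$, and this is equivalent to $\mathscr P(0)=C>0$. So the whole problem reduces to bounding $C$ from below by an explicit function of $\mu$.

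The paper does this with two elementary geometric estimates on $k_M$ coming from \eqref{first-opt-geom}: at the point of maximal curvature $\|\overrightarrow{QM(s_M)}\|=\lambda/\mu+k_M^2/(2\mu)$, and since this is the circumradius one has $\|\overrightarrow{QM(s_M)}\|\geq 1$ (else $P<2\pi$), giving $k_M^2\geq 2\mu-2\lambda$; on the other hand the half-diameter is at most $\pi/2$, giving $k_M\leq\sqrt{\pi\mu-2\lambda}$. Plugging the upper bound $\lambda\le\lambda_M(\mu)$ obtained from the stadium comparison and Gage's inequality, and the lower bound $\lambda\ge\lambda_m(\mu)$ from \eqref{minl0}, into the expression for $C$ produces an explicit elementary function of $\mu$ which is positive once $\mu>47.775$. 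No closure condition, no period analysis, no case split on $p$, and no elliptic integrals are needed. Your proposed route through the Jacobi representation and the transcendental closure equation might in principle yield \emph{some} threshold, but you have given no argument that it yields $47.7750$; that number is an artefact of the particular chain of inequalities above, not a natural bifurcation value for strictly convex critical points. Without the $C>0$ reduction your outline does not constitute a proof of the stated proposition.
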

\begin{proof}
Let us multiply \eqref{bht0.5} by $k'$:
\begin{equation}\label{bht2.4}
\frac{(k')^2}{2}=-\frac{k^4}{8}-\frac{{\lambda}}{2}k^2 + \mu k +C.
\end{equation}
where $\lambda$ is defined in \eqref{bht5.4} and
\begin{equation}\label{bht2.5}
C=\frac{k_M^4}{8}+\frac{{\lambda}}{2}k_M^2 - \mu k_M.
\end{equation}
where $k_M>0$ is the maximum of $k$. 

Notice that equation \eqref{bht2.4} can be written as
$$
\frac{(k')^2}{2}=\mathscr{P}(k),
$$
where $\mathscr{P}$ is a concave polynomial function (using the fact that $\lambda>0$). As a consequence, either $\mathscr{P}$ has 2 distinct roots $k_m<k_M$
or $\mathscr{P}$ has a double root $k_m=k_M$ (and $\mathscr{P}\leq 0$).
Therefore $\partial \Omega$ has a segment if and only if $k_m<0$. Note that this condition is equivalent to $\mathscr{P}(0)=C>0$.

Relation \eqref{bht0.7} evaluated at the point $s_M$ such that $k(s_M)=k_M$, entails 
\begin{equation}\label{bht3.7}
\|\overrightarrow{QM(s_M)}\|=\frac{{\lambda}}{\mu}+ \frac{1}{2\mu} k_M^2.
\end{equation}
Notice that $\|\overrightarrow{QM(s_M)}\|\geq 1$ otherwise the whole domain will be included in the disk of center $Q$ and of radius one (because $\|\overrightarrow{QM(s_M)}\|$ is the radius of the circumscribed disk according to (\ref{bht1.2}), 
which leads to contradiction since the set $\Omega$ has perimeter $2\pi$.
Therefore, we deduce
\begin{equation}\label{bht6.1}
k_M^2\geq 2\mu - 2{\lambda}.
\end{equation}
Let us denote by $E_\mu$ and $A_\mu$ the elastic energy and the area of an optimal domain.
We use inequality (\ref{bht18:35}) and  Gage's inequality \eqref{main_ineq} to obtain
$$
\frac{\pi^2}{A_\mu} + \mu A_\mu \leq E_\mu + \mu A_\mu \leq 3\pi \sqrt{\mu} - \pi.
$$
Assuming $\mu>1$, we deduce from the above estimate that
\begin{equation}\label{bht6.0}
A_\mu \leq \mathcal{A}_M(\mu):=\frac{\pi}{2\mu}\left( (3\sqrt{\mu}-1)+\sqrt{5\mu-6\sqrt{\mu}+1} \right).
\end{equation}
The above estimate, the definition of $\lambda$ in (\ref{bht5.4}) and Gage's inequality \eqref{main_ineq}  yield
\begin{equation}\label{seg1}
\lambda=\frac{2\mu A_\mu - E_\mu}{2\pi}\,\leq \lambda_M(\mu):=\frac{2\mu \mathcal{A}_M(\mu)-\pi^2/\mathcal{A}_M(\mu)}{2\pi}.
\end{equation}
Therefore \eqref{bht6.1} implies
\begin{equation}\label{seg2}
k_M^2\geq 2\mu - 2{\lambda}_M(\mu).
\end{equation}
Since $\mu > {\lambda}_M(\mu)$ for $\mu >3$, we deduce 
\begin{equation}\label{seg2.1}
k_M^4\geq (2\mu - 2{\lambda}_M(\mu))^2.
\end{equation}

At last, we use the fact that the half-diameter of the optimal set is less than $\pi/2$
(because the perimeter is $2\pi$) and the optimality condition (\ref{bht3.7}) to  get
$$\frac{{\lambda}}{\mu}+ \frac{1}{2\mu} k_M^2=
\|\overrightarrow{QM(s_M)}\|\leq \frac{\pi}{2}\,.$$
Combining the above estimate with \eqref{minl0}, it follows
\begin{equation}\label{seg3}
k_M(\mu)\leq \sqrt{\pi \mu -2\lambda_m(\mu)},
\end{equation}
where
$$
\lambda_m(\mu):=\frac{\sqrt{1+16\mu}-1}{4} - \sqrt{\frac{\mu}{2}}.
$$

Gathering \eqref{seg2.1}, \eqref{minl0}, \eqref{seg2}, and \eqref{seg3}, we deduce from  the definition of the constant $C$ in
\eqref{bht2.5} that
$$
C\geq \frac{1}{8} (2\mu - 2{\lambda}_M(\mu))^2 
+ \frac{1}{2}\lambda_m(\mu) (2\mu - 2{\lambda}_M(\mu))
- \mu \sqrt{\pi \mu -2\lambda_m(\mu)}.
$$
It turns out that the function of $\mu$ in the right-hand side is positive as soon as $\mu>47.775$
which proves the result.
\end{proof}

\section{The disk}\label{secdisk}
As already pointed out, Gage's inequality (Theorem \ref{main-th1}) asserts that the disk minimizes the product $E(\Omega)A(\Omega)$ among convex bodies with given perimeter. This leads to the following result.
\begin{Corollary}
The disk is the unique minimizer to Problem \eqref{pb2} for $\mu\leq 1$.
\end{Corollary}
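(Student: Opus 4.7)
The plan is to combine Gage's inequality (Theorem \ref{main-th1}) with the elastic energy lower bound \eqref{minper} and exploit monotonicity of an explicit one-variable function.

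Recall that under the normalization $P(\Omega)=2\pi$, the admissible disk is the unit disk $D$, for which $A(D)=\pi$ and $E(D)=\pi$ (since the curvature is identically $1$). Hence
\[
J_\mu(D)=E(D)+\mu A(D)=\pi(1+\mu).
\]
It suffices to show $J_\mu(\Omega)\ge \pi(1+\mu)$ for every $\Omega\in\mathcal{C}$, with equality only for $\Omega=D$.

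First, applying Gage's inequality \eqref{main_ineq} with $P(\Omega)=2\pi$ yields
\[
A(\Omega)\ge \frac{\pi^2}{E(\Omega)},
\]
with equality if and only if $\Omega$ is a disk. Setting $e:=E(\Omega)$, this gives
\[
J_\mu(\Omega)\ge e + \mu\,\frac{\pi^2}{e}=:f(e).
\]
The second ingredient is the lower bound \eqref{minper}, which under $P(\Omega)=2\pi$ forces $e\ge\pi$, with equality only for the disk.

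Next I would study the one-variable function $f(e)=e+\mu\pi^2/e$ on $[\pi,\infty)$. Its derivative $f'(e)=1-\mu\pi^2/e^2$ vanishes at $e=\pi\sqrt{\mu}$, and since $\mu\le 1$ we have $\pi\sqrt{\mu}\le\pi$, so $f$ is nondecreasing on $[\pi,\infty)$. Therefore
\[
J_\mu(\Omega)\ge f(e)\ge f(\pi)=\pi+\mu\pi=\pi(1+\mu)=J_\mu(D),
\]
which establishes that the disk is a minimizer.

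For uniqueness, suppose $J_\mu(\Omega)=\pi(1+\mu)$. Then all the inequalities above must be equalities; in particular $e=E(\Omega)=\pi$, which by the equality case in \eqref{minper} forces $\Omega$ to be a disk, and then the perimeter constraint forces $\Omega=D$. No step here is really an obstacle — the only mild subtlety is noting that the hypothesis $\mu\le 1$ is exactly what ensures the minimum of $f$ on $[\pi\sqrt{\mu},\infty)$ falls at or below the feasible endpoint $e=\pi$, so that the combined estimate is saturated precisely at the disk.
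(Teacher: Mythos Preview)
Your proof is correct and uses the same two ingredients as the paper --- Gage's inequality \eqref{main_ineq} and the elastic-energy lower bound \eqref{minper} --- but packages them a bit differently. The paper first treats $\mu=1$ via the AM--GM inequality $E+A\ge 2\sqrt{EA}\ge 2\pi$, and then reduces $\mu<1$ to this case by writing $E+\mu A=\mu(E+A)+(1-\mu)E$ and applying \eqref{minper} to the second term. Your route via the auxiliary function $f(e)=e+\mu\pi^2/e$ handles all $\mu\le 1$ in one stroke and makes the uniqueness discussion slightly cleaner (you only need the equality case of \eqref{minper}, not of Gage). Both arguments are equally elementary; yours is marginally more streamlined.
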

\begin{proof}
Let $\Omega$ be a convex set of perimeter $2\pi$ and let $D$ be the unit disk. 
Using Gage's inequality \eqref{main_ineq} we have
$$
E(\Omega)+A(\Omega)\geq 2\sqrt{E(\Omega)A(\Omega)} \geq 2\sqrt{E(D)A(D)} = E(D)+A(D),
$$
the last equality coming from the fact that $E(D)=A(D)=\pi$. 
Therefore the disk is the minimizer to Problem \eqref{pb2}  for $\mu=1$.
For $\mu\leq 1$, we use the isoperimetric inequality for the elastic energy, expressed in \eqref{minper}, \eqref{pb3}, to obtain:
\begin{eqnarray*}
E(\Omega)+\mu A(\Omega)=\mu(E(\Omega)+A(\Omega))+(1-\mu)E(\Omega)\geq \\
 \mu(E(D)+A(D))+(1-\mu)E(D) = E(D)+\mu A(D).
\end{eqnarray*}
\end{proof}

Notice that the disk cannot be the solution for large $\mu$. 
Indeed considering the stadium $\Omega_S$
obtained as the union of a rectangle of length $\pi/2$ with two half-disks of radius $1/2$ one gets $P(\Omega_S)=2\pi$ and
\begin{equation}\label{19:03}
E(\Omega_S)+\mu A(\Omega_S) = 2\pi +\mu \frac{3\pi}{4}.
\end{equation}
Comparing this with the value of $E(D)+\mu A(D)$ (where $D$ is the unit disk) we obtain an equality for $\mu=4$ while \eqref{19:03} 
gives a strictly better value for $\mu>4$.
We are going to show that in fact the disk cannot be the solution for $\mu>3$ since it is no longer a local minimum. 
Notice that the value $\mu=3$ is probably optimal since the numerical algorithm presented in Section \ref{secnumeric}) seems to show that the disk is optimal for $\mu\leq 3$.
\begin{Theorem}\label{theo-disk}
The unit disk $D$ is a local strict minimum for Problem \eqref{pb2} if and only if $\mu\leq 3$.
\end{Theorem}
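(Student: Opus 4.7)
The approach is a second-order analysis of $J_\mu = E+\mu A$ at the unit disk using the support-function parametrization. Writing $h(t) = 1 + \varepsilon u(t)$ so that $\phi = h + h'' = 1 + \varepsilon(u+u'')$, the perimeter constraint and the closure conditions $\int h \cos t\,dt = \int h \sin t \, dt = 0$ force the admissible perturbations to have Fourier expansion $u = \sum_{n\geq 2}(a_n \cos nt + b_n \sin nt)$ (no constant, $\cos t$ or $\sin t$ mode). Using $A = \frac{1}{2}\int (h^2-(h')^2)\,dt$, obtained from \eqref{paramphi} by integration by parts, and expanding $E = \frac{1}{2}\int \phi^{-1}\,dt$ to second order in $\varepsilon$, Parseval's identity produces the diagonalised formula
\begin{equation*}
J_\mu(\Omega) - J_\mu(D) \;=\; \frac{\pi\varepsilon^{2}}{2}\sum_{n\geq 2}(n^{2}-1)\bigl((n^{2}-1)-\mu\bigr)(a_n^{2}+b_n^{2}) + O(\varepsilon^{3}).
\end{equation*}

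Two of the three cases fall out of this expression at once. For $\mu < 3$ every Fourier coefficient $(n^{2}-1)((n^{2}-1)-\mu)$ is bounded below by a positive multiple of $(n^{2}-1)^{2}$, so the quadratic form dominates the cubic remainder and the disk is a strict local minimum. For $\mu > 3$ the single test perturbation $u(t) = \cos 2t$ makes the coefficient $3(3-\mu)$ strictly negative, so $J_\mu$ decreases and the disk is not even a local minimum.

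The delicate case is $\mu = 3$, where the second variation degenerates exactly on the two-dimensional ``ellipse'' subspace $V := \operatorname{span}\{\cos 2t,\sin 2t\}$. By the rotational invariance of the problem I may restrict to perturbations of the form $h = 1 + \varepsilon\cos 2t + r$ with $r\perp V$. Along the pure direction $u = \cos 2t$ both energies are computable in closed form using the identity $\int_{0}^{2\pi} dt/(1+\varepsilon\cos 2t) = 2\pi/\sqrt{1-\varepsilon^{2}}$ and the explicit support function $h = 1 - (\varepsilon/3)\cos 2t$, yielding
\begin{equation*}
J_{3}(\Omega_\varepsilon) \;=\; \frac{\pi}{\sqrt{1-\varepsilon^{2}}} + 3\pi\Bigl(1-\tfrac{\varepsilon^{2}}{6}\Bigr) \;=\; 4\pi + \tfrac{3\pi}{8}\varepsilon^{4} + O(\varepsilon^{6}).
\end{equation*}
This strictly positive quartic contribution along $V$, combined with the uniform coercivity of the second variation on $V^{\perp}$ (where the coefficients are all bounded below by $(n^{2}-1)(n^{2}-4) > 0$ for $n\geq 3$), together with Young's inequality to absorb the mixed $\varepsilon$--$r$ cross terms, delivers strict minimality at $\mu = 3$.

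The main obstacle is precisely this borderline case: once the Hessian ceases to be coercive along the ellipse subspace one must descend to the fourth-order expansion, and rotational invariance is the essential tool that reduces the two-dimensional degenerate subspace to an effectively one-parameter explicit calculation.
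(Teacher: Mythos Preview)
Your approach is essentially identical to the paper's: support-function parametrization, Fourier expansion with the $n=0,1$ modes killed by the perimeter and closure constraints, the diagonalized second variation $\frac{\pi\varepsilon^{2}}{2}\sum_{n\ge 2}(n^{2}-1)((n^{2}-1)-\mu)(a_n^{2}+b_n^{2})$, and a fourth-order computation along the $\cos 2t$ direction for the borderline $\mu=3$. Your handling of the $\mu=3$ case is in fact more explicit than the paper's (which computes only along the pure $n=2$ direction and does not spell out the cross-term absorption); the one item you omit is that the expansion $\phi=1+\varepsilon(u+u'')$ only directly covers $C^{2}_{+}$ competitors, and the paper closes this by invoking its density result (Theorem~\ref{thmdensity}) to pass to the full admissible class $\mathcal{C}$.
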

\begin{proof}
We consider small perturbations of the unit disk obtained through perturbations of its
support function. First, let $\mu>3$ and consider the convex body $\Omega_\ep$ whose support function is
\begin{equation}\label{loc1}
h_\ep(t):=1+\ep \cos 2t.
\end{equation}
Notice that the set $\Omega_\ep$ is $C^2_+$ while $\ep< 1/3$. Moreover, using \eqref{paramphi}, its area is
$$
A(\Omega_\ep)=\frac{1}{2}\int_0^{2\pi} (1+\ep \cos(2t))(1-3\ep \cos(2t))\,dt=\pi(1-\frac{3}{2}\,\ep^2),
$$
and its elastic energy is
$$
E(\Omega_\ep)=\frac{1}{2}\int_0^{2\pi} \frac{dt}{1-3\ep\cos(2t)}=\frac{\pi}{\sqrt{1-9\ep^2}}.
$$
Therefore
$$
J_\mu(\Omega_\ep)=E(\Omega_\ep) +\mu A(\Omega_\ep)=\pi(1+\mu)-\pi \frac{\ep^2}{2}\,(3\mu-9)+o(\ep^2),
$$
which is strictly less than $J_\mu(D)=E(D)+\mu A(D)=(1+\mu)\pi$ for $\ep$ small enough.

\smallskip
Conversely for $\mu\leq 3$ let $D_\ep$ be a perturbation of the unit disk with $D_\ep\in C^2_+$ and
let $d_\ep$ denote its support function. 
Since $P(\Omega_\ep)=2\pi$ and from \eqref{eq:sturm}, we can write
\begin{equation}\label{loc}
d_\ep(t)=1+\ep \sum_{k=2}^{+\infty} a_k\cos(kt)+b_k\sin(kt).
\end{equation}
Using again \eqref{paramphi} and since $d_\ep\in C^2$, we have
$$
A(D_\ep)=\pi\left(1-\frac{\ep^2}{2}\sum_{k=2}^{+\infty} (k^2-1)[a_k^2+b_k^2]\right),
$$
and
$$
E(D_\ep)=\pi\left(1+\frac{\ep^2}{2}\sum_{k=2}^{+\infty} (k^2-1)^2[a_k^2+b_k^2] +o(\ep^2)\right).
$$
Thus
$$
J_\mu(D_\ep)-J_\mu(D)=\frac{\pi\ep^2}{2}\sum_{k=2}^{+\infty} \left((k^2-1)^2-\mu(k^2-1))[a_k^2+b_k^2] +o(\ep^2\right),
$$
which is positive for $\ep$ small enough when
either $\mu<3$, whatever the $a_k,b_k$ are, or $\mu=3$ if at least one of the $a_k,b_k$ are non-zero for $k\geq 3$.

It remains to consider the case $\mu=3$, $a_k=b_k=0$ for $k\geq 3$ for which a direct computation gives
$$J_\mu(D_\ep)-J_\mu(D)=\frac{3^5\pi\ep^4}{2^3}\,[a_2^2+b_2^2]^2+o(\ep^4)$$
and the result follows.

Finally, for general perturbation not necessarily $C^2_+$, we use Theorem \ref{thmdensity}.
\end{proof}

\section{Description of the Blaschke-Santal\'o diagram}\label{secE}
We want to study the set
\begin{equation}\label{setE1}
\mathcal{E}:=\left\{(x,y)\in \R^2, x=\frac{4\pi A(\Omega)}{P(\Omega)^2},
y=\frac{E(\Omega)P(\Omega)}{2\pi^2},\,\Omega\in \mathcal{C} \right\},
\end{equation}
where $\mathcal{C}$ is defined by \eqref{bht5.0}.

Notice that by homogeneity the sets $\Omega$ and $t\Omega$ correspond to the same point in $\E$.
Therefore, without loss of generality we can consider convex sets with fixed perimeter $P(\Omega)=2\pi$.

According to the classical isoperimetric inequality and
inequality (\ref{minper}), for any $(x,y)\in \mathcal{E}$ it holds
$x\leq 1$ and $y\geq 1$.
Moreover, we emphasize that Gage's inequality (\ref{main_ineq}) writes as $xy\geq 1$.

Let us now present the main result of this section: a convexity result for the set $\E$.
\begin{Theorem}
The set $\E$ is 
convex with respect to both the horizontal and the vertical directions. That is: 
{if $(x_0,y_0)\in\E$, then $[x_0,1)\times [y_0,\infty)\in \mathcal{E}$.}

Moreover, the half-line $(x=1, y=s)$, for $s\in [1,+\infty)$ is contained in the boundary of $\mathcal{E}$ and it is not included in $\mathcal{E}$ except for the point $(1,1)$. 
\end{Theorem}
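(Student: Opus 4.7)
The plan is to treat the two parts of the statement separately. For the half-line, the non-inclusion in $\mathcal{E}$ of any point $(1,y)$ with $y>1$ is immediate from the isoperimetric inequality: $x=1$ forces $4\pi A(\Omega)=P(\Omega)^{2}$, so $\Omega$ is a disk, which has $y=1$. To show $(1,y)\in \partial\mathcal{E}$ for every $y>1$, I will exhibit an explicit approximating sequence: setting $c:=\sqrt{1-1/y^{2}}\in(0,1)$ and taking the convex bodies $\Omega_{n}$ with radius of curvature $\phi_{n}(\theta)=1+c\cos(n\theta)$, the classical identity $\int_{0}^{2\pi}d\theta/(a-b\cos n\theta)=2\pi/\sqrt{a^{2}-b^{2}}$ valid for $a>|b|$ yields
\[
x(\Omega_{n})=1-\frac{c^{2}}{2(n^{2}-1)},\qquad y(\Omega_{n})=\frac{1}{\sqrt{1-c^{2}}}=y,
\]
so $(x(\Omega_{n}),y(\Omega_{n}))\in \mathcal{E}$ converges to $(1,y)$ as $n\to \infty$.

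For the rectangle inclusion, fix $\Omega_{0}\in\mathcal{C}$ with $P(\Omega_{0})=2\pi$ realizing $(x_{0},y_{0})$ with $x_{0}<1$ and a target $(x_{1},y_{1})\in [x_{0},1)\times [y_{0},\infty)$. The idea is to reduce the general case to vertical convexity via a Minkowski-sum path. Consider the rescaled Minkowski family $\Omega_{r}:=(\Omega_{0}+rD)/(1+r)$, which has perimeter $2\pi$ and $x(\Omega_{r})=(x_{0}+2r+r^{2})/(1+r)^{2}$, strictly increasing from $x_{0}$ to $1$ since $(d/dr)x(\Omega_{r})=2(1-x_{0})/(1+r)^{3}$. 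Furthermore $y(\Omega_{r})=((1+r)/(2\pi))\int_{0}^{2\pi}d\theta/(\phi_{0}+r)$ is monotonically decreasing: differentiating and using $\int (\phi_{0}-1)\,d\theta=0$, one rewrites
\[
y'(r)=\frac{1}{2\pi}\int_{0}^{2\pi}(\phi_{0}-1)\left[\frac{1}{(\phi_{0}+r)^{2}}-\frac{1}{(1+r)^{2}}\right]d\theta,
\]
whose integrand is pointwise non-positive by the monotonicity of $u\mapsto 1/(u+r)^{2}$. Choosing $r^{\star}$ with $x(\Omega_{r^{\star}})=x_{1}$ yields $\Omega^{\prime}:=\Omega_{r^{\star}}\in \mathcal{C}$ with $x(\Omega^{\prime})=x_{1}$ and $y(\Omega^{\prime})\in[1,y_{0}]\subset[1,y_{1}]$. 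It then remains to establish vertical convexity: for every $\Omega\in\mathcal{C}$ and every $y^{\star}\geq y(\Omega)$, construct $\Omega^{\star}\in\mathcal{C}$ with $x(\Omega^{\star})=x(\Omega)$ and $y(\Omega^{\star})=y^{\star}$.

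For vertical convexity I propose a two-parameter family combining a Minkowski sum with a high-frequency support-function wrinkle:
\[
h_{c,r}(\theta)=\frac{h_{\Omega}(\theta)+c\cos(n\theta)+r}{1+r},
\]
with $r\geq 0$, $c\in \mathbb{R}$, and $n$ an integer exceeding the Fourier spectrum of $h_{\Omega}$ (arranged by first approximating $\Omega$ by a convex body with trigonometric-polynomial support function). Fourier orthogonality then gives
\[
x(c,r)=\frac{x(\Omega)+2r+r^{2}-c^{2}(n^{2}-1)/2}{(1+r)^{2}},
\]
so the constraint $x(c,r)=x(\Omega)$ cuts out the smooth curve $c^{2}(n^{2}-1)=2(1-x(\Omega))(2r+r^{2})$ in parameter space. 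Along this curve $y(c,r)$ varies continuously: it equals $y(\Omega)$ at $(c,r)=(0,0)$ and diverges to $+\infty$ as the convexity bound $c(n^{2}-1)\to \min_{\theta}(\phi_{\Omega}(\theta)+r)$ is approached, because $\phi$ touching zero forces $E=\tfrac{1}{2}\int k^{2}\,ds$ to blow up. The intermediate value theorem then produces a parameter pair realizing $y^{\star}$.

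The main technical obstacle lies in this last step: rigorously justifying the Fourier-orthogonality identities for general $\Omega$ via the approximation by bandlimited shapes, and verifying continuity together with surjectivity of $r\mapsto y(c(r),r)$ along the constraint curve onto $[y(\Omega),+\infty)$, with particular care for the divergence of $y$ at the convexity boundary.
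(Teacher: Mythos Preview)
Your treatment of the half-line $x=1$ is essentially identical to the paper's (you parametrize by the radius of curvature $\phi_n=1+c\cos n\theta$ instead of the support function $h_{n,a}=1+a\cos nt$, but these are the same family after the substitution $c=(n^2-1)a$).

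For the rectangle inclusion, your route is genuinely different. The paper first proves \emph{vertical} convexity by a rather intricate construction: it perturbs $\Omega$ by $a_m\cos mt$ with $\alpha_m>0$ to lower the area, builds a near-disk $\Omega_0$ with larger area, takes Minkowski combinations $\Omega_\tau=\tau\Omega_1+(1-\tau)\Omega_0$, uses the intermediate value theorem in $\tau$ to match the area, and then lets the parameters approach the convexity boundary to send $E\to\infty$. It then obtains \emph{horizontal} convexity from the inequality $E((1-\tau)\Omega_0+\tau\Omega_1)\leq (1-\tau)E(\Omega_0)+\tau E(\Omega_1)$, which needs two domains with the \emph{same} elastic energy, and couples this with the already-proved vertical convexity. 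Your order is reversed and the horizontal step is more elementary: the Minkowski path $\Omega_r=(\Omega_0+rD)/(1+r)$ toward the disk makes $x$ strictly increase to $1$ while your computation of $y'(r)$ (rewritten using $\int(\phi_0-1)=0$) cleanly shows $y$ is nonincreasing, so you reach any $x_1\in[x_0,1)$ with $y\leq y_0$ and reduce everything to vertical convexity. Your vertical step, a two-parameter family $h_{c,r}=(h_\Omega+c\cos n\theta+r)/(1+r)$ constrained by $x(c,r)=x(\Omega)$, is in the same spirit as the paper's but more compactly organized.

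Two points deserve tightening. First, both your horizontal computation (which manipulates $\phi_0$ as a positive function) and your vertical construction (which needs $n$ beyond the Fourier spectrum of $h_\Omega$) presuppose $\Omega$ is $C^2_+$ with trigonometric-polynomial support; the paper handles exactly this by restricting to $\mathcal{E}_{\mathrm{reg}}$ and then invoking a density theorem (their Theorem~5.2) to pass to general $\mathcal{C}$. You flag this only for the vertical step, but it is equally needed for the horizontal one; your approximation-by-bandlimited-shapes idea will work, but you must arrange the approximant so that $x$ decreases and $y$ stays below the target, as the paper does. Second, for your vertical step you should check that the constraint curve actually meets the convexity boundary: along the constraint, $c\sim r\sqrt{2(1-x(\Omega))/(n^2-1)}$ for large $r$, while the convexity bound behaves like $c(n^2-1)\lesssim \min\phi_\Omega+r$, so the two curves intersect at finite $r$ only if $2(1-x(\Omega))(n^2-1)>1$. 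This is harmless since you may choose $n$ large, but it should be stated.
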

\begin{proof}
Let us first prove 
the convexity result for the regularized set
\begin{equation}\label{setE1reg}
\mathcal{E}_{reg}:=\left\{(x,y)\in \R^2, x=\frac{4\pi A(\Omega)}{P(\Omega)^2},
y=\frac{E(\Omega)P(\Omega)}{2\pi^2},\,\Omega\in \mathcal{C}_{reg} \right\},
\end{equation}
where $\mathcal{C}_{reg}$ is the subset of $\mathcal{C}$ of convex bodies $\Omega$ whose radius of curvature $\phi=h+h''$ is positive and of class $C^1$.

{We show that for any $(x_0,y_0)\in\E_{reg}$, { the segment } $(x=t,y=y_0)$,  { is contained in } $\E_{reg},$ { for }$t\in [x_0,1)$; and the half-line  $(x=x_0,y=s)$, { is contained in } $\E_{reg}$ for $s\in [y_0,+\infty)$.
}

We first show the vertical convexity. 
Let us take $(x_0,y_0)\in \mathcal{C}_{reg}$ corresponding to a convex set $\Omega$ of perimeter $2\pi$.
Without loss of generality (up to rotations), we can assume that 
$$
\xi:=\min_{t\in [0,2\pi]} \phi >0,
$$
is attained at $t=0$. 
Let us assume that $\Omega$ is not the unit disk.
Since $P(\Omega)=2\pi$ and by condition \eqref{eq:sturm}, we can write
$$
h(t)=1+\sum_{n\geq 2} \alpha_n \cos(n t)+\beta_n \sin(nt) \quad \forall t\in [0,2\pi].
$$ 
Therefore
$$
\phi(0)=1-\sum_{n\geq 2} \alpha_n (n^2-1)=\xi,
$$
with $\xi<1$ thanks to the expression in (\ref{paramphi}) and to $P(\Omega)=2\pi$.
This implies that there exists $m\geq 2$ such that
$$\alpha_m=\frac{1}{\pi}\, \int_0^{2\pi} h(t)\cos mt\,dt > 0.$$

Let us introduce the convex set $\Omega_1$ defined through its support function by 
$$
h_1(t)=h(t)+a_m\cos(mt),
$$
where $a_m$ is a suitable constant such that $|a_m|<\mu/(m^2-1)$. We emphasize that this guarantees the convexity of $\Omega_1$ since $h_1+{h_1}''>0$. 
In particular the perimeter of $\Om_1$ is $2\pi$
and its area is given by
$$
A(\Om_1)=\frac{1}{2}\int_0^{2\pi} h_1 (h_1 + {h_1}'')=A(\Om)+\frac{\pi}{2} (1-m^2)[a_m^2+2a_m\alpha_m].
$$
Notice that $A(\Om_1)<A(\Om)$ if $0<a_m<\mu/(m^2-1)$. 

Let us now denote by $I$ the interval $I=(0,\mu/(m^2-1))$.
By the isoperimetric inequality $A(\Omega)<\pi$, thus we can choose an integer $p\not= m$, $p\geq 2$ such that
\begin{equation}\label{vc1}
    \frac{\pi}{2(p^2 -1)}\,\leq \pi - A(\Omega).
\end{equation}
Let $a_p$ be a real number satisfying $|a_p|<1/(p^2-1)$.
We introduce a new convex set $\Omega_0$ through its support function: $h_0(t)=1+a_p\cos(pt)$.
By construction we have $\phi_0:={h_0}''+h_0=1+(1-p^2)a_p \cos(pt)>0$, $\Om_0$ has perimeter $2\pi$
and its area is given by $A(\Om_0)=\pi-\pi(p^2-1)a_p^2/2$.
By assumption (\ref{vc1}), we have $A(\Om_0)>A(\Om)$ for any $a_p\in [0,\frac{1}{p^2-1})$.
We denote by $J$ the interval $J=[0,\frac{1}{p^2-1})$.

\smallskip
For $\tau\in [0,1]$ let us consider
the Minkowski combination $\Om_\tau:=\tau\Omega_1 +(1-\tau)\Omega_0$ whose support function is $h_\tau=\tau h_1+(1-\tau)h_0$,
see \cite{schneider} for additional properties of the Minkowski sum.
We have $P(\Omega_\tau)=2\pi$ and its area is given by
\begin{multline}\label{vc2}
  A(\Om_\tau)=\frac{1}{2}\int_0^{2\pi} h_\tau (h_\tau + {h_\tau}'') \ dt
  =\tau^2 \left(A(\Om)
  +\frac{\pi}{2}(1-m^2)[a_m^2+2a_m\alpha_m]\right) \\
  +(1-\tau)^2\left(\pi+\frac{\pi}{2}(1-p^2) a_p^2\right)+\tau(1-\tau)(1-p^2)  \pi a_p\alpha_p.
\end{multline}
Notice that the right-hand side of formula (\ref{vc2}) defines a continuous (quadratic) function of $\tau$, say $g(\tau;a_m,a_p)$
such that,
$$\forall a_m,a_p\in I\times J, \;g(0;a_m,a_p)=A(\Om_0)>A(\Om)\;\mbox{and}\;g(1;a_m,a_p)=A(\Om_1)<A(\Om).$$ Therefore,
for any fixed $a_m, a_p$ in $I\times J$ there exists a value $\tau(a_m,a_p)\in [0,1]$ such that $A(\Om_\tau)=A(\Om)$.
Moreover, the function $(a_m,a_p)\mapsto \tau(a_m,a_p)$ can be chosen such that $\tau$ is continuous and $\tau(0,0)=1$.

The elastic energy of $\Om_\tau$ is given by
\begin{equation}\label{vc3}
    E(\Om_\tau)=\frac{1}{2}\,\int_0^{2\pi} \frac{dt}{(1-\tau)[1+(1-p^2)a_p\cos(pt)]+\tau[\phi+(1-m^2)a_m\cos(mt)]}\,.
\end{equation}
If we replace $\tau$ by $\tau(a_m,a_p)$ this expression defines a continuous function $E(a_m,a_p)$ of $a_m$ and $a_p$ such that
$E(0,0)=E(\Om)$. Moreover since the denominator of the quotient
vanishes at $t=0$ when $a_m$ approaches $\mu/(m^2-1)$ and $a_p$ approaches $1/(p^2-1)$ and since $\phi\in C^1[0,2\pi]),$
the Fatou lemma yields 
$$\lim_{\substack{a_m\to \mu/(m^2-1)\\ a_p\to 1/(p^2-1)}} E(a_m,a_p)=+\infty.$$
Thus the set of values taken by $E(a_m,a_p)$ when $(a_m,a_p)$ varies in $I\times J$ contains $[E(\Om), +\infty)$
which proves that the whole half line $(x_0,y), y\in [y_0,+\infty)$ is in the domain $\E$.


\medskip
We prove the horizontal convexity. Let $\Omega_0$ and $\Omega_1$ be two convex domains of perimeter $2\pi$
with the same elastic energy. We denote by $(x_0,y)$ and $(x_1,y)$ the corresponding points in $\mathcal{E}_{reg}$.
We claim that the elastic energy is convex for the Minkowski sum:
\begin{equation}\label{hc1}
\forall \tau \in [0,1],\ E((1-\tau) \Omega_0 +\tau \Omega_1)\leq (1-\tau) E(\Omega_0)+\tau E(\Omega_1).
\end{equation}
This can be proven by using \eqref{paramphi} and the convexity of the function $x\mapsto 1/x$. 

Let us now consider the  path in the diagram $\E$, joining the two points $(x_0,y)$ and $(x_1,y)$, obtained by the 
points corresponding to the convex
combination $(1-\tau) \Omega_0 +\tau \Omega_1$. Inequality (\ref{hc1}) implies that the whole path
is below the horizontal line of ordinate $y$. We conclude to the fact that all points $(x,y)$ with
$x\in [x_0,x_1]$ belong to $\mathcal{E}_{reg}$ using the vertical convexity.

Let us consider convex domains with a support function defined
by $h_{n,a}(t)=1+a \cos(nt)$, $n\in \mathbb{N}, n\geq 2$, $a\in\mathbb{R}, |a|<1/(n^2-1)$.
The area and elastic energy are given by
$$A(\Omega_{n,a})=\pi-\frac{\pi(n^2-1)a^2}{2} \qquad E(\Omega_{n,a})=\frac{\pi}{\sqrt{1-(n^2-1)^2a^2}}$$
This yields a family of parametric curves $x(a;n)=A(\Omega_{n,a})/\pi$ and $y(a;n)=E(\Omega_{n,a})/\pi$
which accumulate on the half-line $x=1, y\in [1,+\infty)$ when $n$ increases. Finally, this line does
not contain any point in $\mathcal{E}_{reg}$ because the isoperimetric inequality gives $x<1$ except for the ball
(among convex domains). Using the horizontal convexity, this also allows to prove that
for any $(x_0,y_0)\in \mathcal{E}_{reg}$ the horizontal segment
$x=t,y=y_0, t\in [x_0,1) \mbox{ is contained in } \mathcal{E}_{reg}.$

The proof concludes by using Theorem \ref{thmdensity} below.
More precisely, assume $(x_0,y_0)\in\E$. Let us consider $(x_0,y)$ with $y>y_0$. Then there exists $(x_\ep,y_\ep)\in \mathcal{E}_{reg}$ with $x_\ep<x_0$ and $y_\ep<y$.
Using the first part of the proof, we deduce that $(x_0,y)\in [x_\ep,1)\times [y_\ep,\infty)\subset \mathcal{E}_{reg}$.

Consider $(x,y_0)$ with $x>x_0$. We can use relation \eqref{hc1} which is valid for $\Omega\in \mathcal{C}$ (using Theorem \ref{thmdensity}) and follow the same proof as in the case of 
$\mathcal{E}_{reg}$.

%
\end{proof}

\begin{Theorem}\label{thmdensity}
Let $\Omega$ be a convex domain in the class $\mathcal{C}$ (defined in (\ref{bht5.0})),
then there exists a sequence of $C^\infty$ regular strictly convex domains $\Omega_\ep$
in the class $\mathcal{C}$ such that:
\begin{enumerate}
\item the support function $h_\ep$ of $\Omega_\ep$ is of class $C^\infty$;
\item the curvature of $\Omega_\ep$, $k_\ep$ satisfies $k_\ep(s)\geq \ep>0$;
\item the elastic energy converges: $\lim_{\ep\to 0}E(\Omega_\ep) = E(\Omega)$;
\item the area converges: $\lim_{\ep\to 0}A(\Omega_\ep) = A(\Omega)$  and we can assume $A(\Omega_\ep)<A(\Omega)$.
\end{enumerate}
\end{Theorem}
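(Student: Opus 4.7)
The plan is to regularize $\Omega$ by mollifying its support function and then Minkowski-summing a small disk. Write $h$ for the support function of $\Omega$ (normalized so $P(\Omega)=2\pi$) and $\phi=h''+h$ for the radius-of-curvature measure, a nonnegative finite measure of total mass $2\pi$. Since $\Omega\in\mathcal{C}$ satisfies $\theta\in W^{1,2}$, the boundary is $C^1$, so $\phi=\phi_{ac}+\phi_s$ where $\phi_{ac}$ is absolutely continuous with $1/\phi_{ac}\in L^1$ (because $E(\Omega)<\infty$) and $\phi_s=\sum_k L_k\delta_{t_k}$ is an at-most-countable sum of Dirac masses at the normal directions of the straight segments of $\partial\Omega$.

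Fix a smooth, nonnegative, even, $2\pi$-periodic mollifier $\rho_\epsilon$ with unit mass and support in $[-\epsilon,\epsilon]$. Define
$$
\hat h_\epsilon:=h*\rho_\epsilon, \qquad \tilde h_\epsilon:=\hat h_\epsilon+\delta_\epsilon, \qquad h_\epsilon:=\frac{\tilde h_\epsilon}{1+\delta_\epsilon},
$$
where $\delta_\epsilon>0$ is a small parameter. The first step gives $\hat h_\epsilon\in C^\infty$ with nonnegative radius of curvature $\hat\phi_\epsilon=\phi*\rho_\epsilon$; the second step (Minkowski sum with a disk of radius $\delta_\epsilon$) ensures $\tilde\phi_\epsilon\geq\delta_\epsilon>0$; the rescaling enforces $P(\Omega_\epsilon)=2\pi$. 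The resulting $h_\epsilon$ is $C^\infty$ with curvature $k_\epsilon$ bounded below by a positive constant, which (after relabeling) plays the role of the $\epsilon$ in the statement.

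The area convergence $A(\Omega_\epsilon)\to A(\Omega)$ follows from $A=\tfrac12\int h\,d\phi$, the uniform convergence $\hat h_\epsilon\to h$, the weak-$*$ convergence $\hat\phi_\epsilon\to\phi$, and Steiner's formula for the disk-addition step. The delicate point, and the main obstacle, is the convergence $E(\Omega_\epsilon)\to E(\Omega)$, which I would handle via Jensen's inequality. Since $\hat\phi_\epsilon\geq\phi_{ac}*\rho_\epsilon$ and $(\phi_{ac}+\delta_\epsilon)*\rho_\epsilon=\phi_{ac}*\rho_\epsilon+\delta_\epsilon$, convexity of $1/x$ applied with the probability density $\rho_\epsilon$ yields
$$
\int_0^{2\pi}\frac{dt}{\hat\phi_\epsilon+\delta_\epsilon}\leq\int_0^{2\pi}\frac{dt}{(\phi_{ac}+\delta_\epsilon)*\rho_\epsilon}\leq\int_0^{2\pi}\frac{dt}{\phi_{ac}+\delta_\epsilon},
$$
and the rightmost integral tends to $2E(\Omega)$ by monotone convergence. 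Fatou's lemma, combined with the pointwise convergence $\hat\phi_\epsilon\to\phi_{ac}$ almost everywhere (outside the null set $\{t_k\}$), supplies the matching lower bound, so $E(\Omega_\epsilon)\to E(\Omega)$.

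Finally, to enforce the strict inequality $A(\Omega_\epsilon)<A(\Omega)$, I would apply a small high-frequency correction: replace $h_\epsilon$ by $h_\epsilon+\eta_\epsilon\cos(Nt)$ with $N$ large and $\eta_\epsilon>0$ small. By the Fourier identity $A=\pi h_0^2-\tfrac\pi2\sum_{k\geq 2}(k^2-1)(a_k^2+b_k^2)$, this decreases the area by $\pi\eta_\epsilon^2(N^2-1)/2$; since $\int_0^{2\pi}\cos(Nt)\,dt=0$, the perimeter is preserved; and provided $\eta_\epsilon N^2<\delta_\epsilon/2$, the radius of curvature stays bounded away from zero, so strict convexity and the convergences of $A$ and $E$ are retained. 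A diagonal choice (for instance $\delta_\epsilon=\sqrt{\epsilon}$, $N\sim\epsilon^{-1/4}$, $\eta_\epsilon\sim\epsilon^{3/4}$) makes the three constraints compatible.
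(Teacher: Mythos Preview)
Your approach, via mollification of the support function plus a Minkowski-added disk, is genuinely different from the paper's, which instead works with the \emph{gauge} function $u$ (polar parametrization $r=1/u(\tau)$) and sets $u_\ep=u\!*\!\rho_{\ep^4}+a\ep$; energy convergence there comes from $W^{2,2}$-convergence of $u_\ep$ and the explicit formula $E=\tfrac12\int u^4(u+u'')^2(u^2+{u'}^2)^{-5/2}\,d\tau$. Your Jensen/Fatou argument for $E(\Omega_\ep)\to E(\Omega)$ is correct and pleasant, and area convergence is fine.

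The genuine gap is in your final step, the strict inequality $A(\Omega_\ep)<A(\Omega)$. Both of your preliminary operations \emph{increase} the area: by the Fourier identity you quote, $A(\hat\Omega_\ep)-A(\Omega)=\pi\sum_{n\ge2}(n^2-1)|a_n|^2\bigl(1-|\hat\rho_\ep(n)|^2\bigr)\ge0$, and the disk-plus-rescale step adds a further $2(\pi-A(\Omega))\,\delta_\ep+O(\delta_\ep^2)$. Thus before your correction $A(\Omega_\ep)-A(\Omega)\ge c\,\delta_\ep$ with $c=2(\pi-A(\Omega))>0$. But under your convexity constraint $\eta_\ep N^2<\delta_\ep/2$ the high-frequency term can lower the area by at most $\tfrac{\pi}{2}(N^2-1)\eta_\ep^2<\tfrac{\pi}{8}\,\delta_\ep^2/N^2\le\tfrac{\pi}{32}\,\delta_\ep^2=o(\delta_\ep)$, which can never compensate. (Your concrete choice $\delta_\ep=\sqrt\ep$, $N\sim\ep^{-1/4}$, $\eta_\ep\sim\ep^{3/4}$ already fails the constraint, since $\eta_\ep N^2\sim\ep^{1/4}\gg\ep^{1/2}/2$.) So no admissible triple $(\delta_\ep,N,\eta_\ep)$ yields $A(\Omega_\ep)<A(\Omega)$.

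This is precisely the point where the gauge-function route is more convenient: adding $a\ep$ to $u$ \emph{simultaneously} gives the quantitative strict-convexity bound $u_\ep''+u_\ep\ge a\ep$ and the pointwise inequality $u_\ep>u$, which via $A=\tfrac12\int u^{-2}\,d\tau$ immediately forces $A(\Omega_\ep)<A(\Omega)$; one then checks that rescaling to restore the perimeter only shrinks further. In the support-function framework the additive constant that secures positivity of $\phi_\ep$ pushes the area the wrong way, so a different device would be needed to rescue your argument.
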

\begin{proof}
Since $\Omega$ is an open bounded convex set, there exist a point $O$ and
two positive numbers $0<r_0<R_0$ such that
$B(O,r_0) \subset \Omega \subset B(O,R_0)$. We use the gauge function $u$ which
defines the convex domain in polar coordinates $(r,\tau)$:
$$\Omega:=\left\{(r,\tau)\in (r_0,R_0)\times \R \ ; \ r<\frac{1}{u(\tau)}\right\}$$
where $u$ is a positive and $2\pi$-periodic function. Since $\partial\Omega$ is contained in the ring $B(O,R_0) \setminus B(O,r_0)$ we have
\begin{equation}\label{de1}
R_0^{-1}\leq u(\tau) \leq r_0^{-1}
\end{equation}
Moreover, it is classical that
the convexity of $\Omega$ is equivalent to the fact that $u''+u$ is a non-negative measure.
Let us detail the regularity of $u$ when $\Omega$ belongs to the class
$\mathcal{C}$ and the link between the gauge function $u$ and the support function $h$.
From the parametrization 
$$
\begin{cases}
x(\tau)=(\cos\tau)/u(\tau),\\
y(\tau)=(\sin\tau)/u(\tau)
\end{cases}
$$ 
we deduce
$ds=\sqrt{u(\tau)^2+{u'(\tau)}^2}/u(\tau)^2\,d\tau$ which makes the curvilinear abscissa $s$
an increasing function of the angle $\tau$. The unit tangent vector is given by
$$
\begin{cases}
\cos\theta = - \frac{(\sin\tau) u(\tau) + (\cos\tau) u'(\tau)}{\sqrt{u(\tau)^2+{u'(\tau)}^2}}\\
\sin\theta =  \frac{(\cos\tau) u(\tau) - (\sin\tau) u'(\tau)}{\sqrt{u(\tau)^2+{u'(\tau)}^2}},
\end{cases}
$$
while the exterior normal vector is $\nn=(\sin\theta, - \cos\theta)$. 
Since the boundary of $\Omega$ is strictly convex, there is a one-to-one correspondence between $\tau$ and $\theta$.
We write $h(\tau)$, meaning $h(\theta(\tau))$, given by
\begin{equation}\label{de2}
h(\tau)=\langle\overrightarrow{OM}, \nn\rangle=\frac{1}{\sqrt{u(\tau)^2+{u'(\tau)}^2}}.
\end{equation}
Since $r_0\leq h(\tau)\leq R_0$, we deduce from (\ref{de2}) that $u'\in L^\infty(\R)$. 
Moreover the 
curvature at the point $(x(\tau),y(\tau))$ is given by 
$$
k(\tau)=\frac{u^3}{(u^2+{u'}^2)^{3/2}}\,(u+u''),
$$
and hence the elastic energy is
\begin{equation}\label{de3}
    E(\Omega) = \frac{1}{2}\,\int_0^{2\pi} \frac{u^4(u+u'')^2\,d\tau}{(u^2+{u'}^2)^{5/2}} .
\end{equation}
Therefore, the fact that $\Omega$ belongs to the class
$\mathcal{C}$ means that the function 
$$
\tau\mapsto \frac{u^2(u+u'')}{(u^2+{u'}^2)^{5/4}}
$$ 
is in $L^2(0,2\pi)$.
More precisely $u\in W^{2,2}(0,2\pi)$, since $u$ and 
$u'$ are bounded and $u$ is bounded from below, and hence $u''$ is in
$L^2(0,2\pi)$. The converse is also true that is
\begin{equation}\label{de4}
    \Omega \in \mathcal{C} \Longleftrightarrow u\in W^{2,2}(0,2\pi).
\end{equation}
Let us assume that $\Omega$ is strictly convex, of class $C^\infty$, whose curvature is bounded from
below: i.e. $k\geq \ep>0$. Then its gauge function is of class $C^\infty$,
and the function $\tau\mapsto \theta(\tau)$ is strictly increasing, $C^\infty$ and its
derivative is given by 
$$
\frac{d\theta}{d\tau}= \frac{u(u+u'')}{u^2+{u'}^2}
$$ 
which is also bounded from below.
Therefore its inverse function $\theta\mapsto \tau(\theta)$ is of class $C^\infty$. Notice that the angle
of the normal vector $\nn$ with the $x$-axis is $t=\theta -\pi/2$, therefore $\tau\mapsto t(\tau)$ is also $C^\infty$.
Moreover the support function can be expressed in terms of $t$ by:
$$
h(t)=\frac{1}{\sqrt{u(t(\tau))^2+{u'(t(\tau))}^2}},
$$
which is $C^\infty$.

We now proceed to the approximation result. 
Let $\Omega$ be any convex domain in the class $\mathcal{C}$
and let $u$ be its gauge function.
We choose a sequence of (non-negative) $C^\infty$ mollifiers $\rho_\epp$ (of support of size $\epp$) and we consider the regularized functions $u_\ep$ defined by
$u_\ep:=u\ast \rho_{\ep^4} + a\ep$ where $a:=2R_0^3/r_0^3$. When $\ep$ goes to zero, the convolution
product $u\ast \rho_{\ep^4}$ converges to $u$ in $W^{2,2}(0,2\pi)$, therefore $u_\ep$ converges to $u$ in $W^{2,2}(0,2\pi)$
and up to subsequence, we can assume that $u_\ep$ and $u'_\ep$ converge uniformly to $u$ and $u'$, respectively
(by the compactness embedding $W^{2,2}\hookrightarrow C^1$).
In particular the sequence of convex domains $\Om_\ep$ defined by their gauge functions $u_\ep$ converges to $\Om$
in the Hausdorff metric and in particular $B(O,r_0/2^{1/6}) \subset \Omega_\ep \subset B(O,2^{1/6} R_0)$
for $\ep$ small enough.
Since $u_\ep''+u_\ep=(u''+u)\ast\rho_{\ep^4} + a\ep\geq a\ep$, the curvature $k_\ep$ of $\Om_\ep$ satisfies
$$k_\ep(\tau)=\frac{u_\ep^3}{(u_\ep^2+{u_\ep'}^2)^{3/2}}\,(u_\ep+u_\ep'')\geq R_0^{-3} 2^{-1/2} r_0^3 2^{-1/2} a\ep=\ep.$$
Following the previous discussion, this implies that the support function of $\Om_\ep$ is of class $C^\infty$.
The $L^2$ convergence of $u_\ep''$ to $u''$ and the uniform convergence of $u_\ep$ and $u'_\ep$ to $u$ and $u'$, respectively,
ensure that 
$$
E(\Omega_\ep) = \frac{1}{2}\,\int_0^{2\pi} \frac{u_\ep^4(u_\ep+u_\ep'')^2\,d\tau}{(u_\ep^2+{u_\ep'}^2)^{5/2}}
$$ 
converges
to $E(\Om)$.
The convergence of the area $A(\Omega_\ep)$ to $A(\Omega)$ follows from the expression of the area in terms of the gauge function:
\begin{equation}\label{aireu}
A(\Om)=\frac{1}{2}\,\int_0^{2\pi} \frac{d\tau}{u^2(\tau)},
\end{equation}
and the uniform convergence.
Let us consider the perimeter of $\Om_\ep$, given by 
\begin{equation}\label{perimeteru}
P(\Om_\ep)=\int_0^{2\pi} \sqrt{u_\ep^2+{u'_\ep}^2}\,d\tau ;
\end{equation}
it converges to $P(\Om)$. Hence it suffices to make an homothety of $\Om_\ep$ of ratio $P(\Om)/P(\Om_\ep)$ to construct
a sequence of convex domains 
$$
\widetilde{\Omega}_\ep=\frac{P(\Omega)}{P(\Omega_\ep)}\Omega_\ep,
$$ with fixed perimeter and which fulfills the same properties as $\Om_\ep$.

It remains to show that in the above construction $A(\widetilde{\Om}_\ep)<A(\Om)$. 
For this purpose 
we need to show that
\begin{equation}\label{tocheck}
A(\Om_\ep)<A(\Om) \quad \text{and} \quad P(\Om_\ep)>P(\Om).
\end{equation}
In order to obtain this, we note that
$$
u_\varepsilon(x)-u(x)=a\varepsilon + \int_0^{2\pi } \rho_{\ep^4} (y) \left[u(x-y)-u(x)\right]\ dy.
$$
Using the fact that the support of $\rho_{\ep^4}$ is of size $\ep^4$, we deduce that
$$
\left|\int_0^{2\pi } \rho_{\ep^4} (y) \left[u(x-y)-u(x)\right]\ dy\right|\leq \ep^4 \|u'\|_{L^\infty(0,2\pi)}.
$$
As a consequence, for $\ep$ small, 
$$
u_\varepsilon(x)>u(x),
$$
which yields $A(\Om_\ep)<A(\Om)$ by formula \eqref{aireu}.

On the other hand, 
$$
u_\varepsilon'(x)-u'(x)=\int_0^{2\pi } \rho_{\ep^4} (y) \left[u'(x-y)-u'(x)\right]\ dy.
$$
Using again the fact that the support of $\rho_{\ep^4}$ is of size $\ep^4$, we deduce that
$$
\left|\int_0^{2\pi } \rho_{\ep^4} (y) \left[u'(x-y)-u'(x)\right]\ dy\right|\leq 
\int_0^{2\pi } \rho_{\ep^4} (y) \left| \int_{x}^{x-y} u'' \ da\right|\ dy\leq 
\ep^2 \|u''\|_{L^2(0,2\pi)}.
$$
As a consequence, for $\ep$ small, 
$$
\sqrt{u_\ep^2+{u'_\ep}^2}>\sqrt{u^2+{u'}^2}
$$
which yields $P(\Om_\ep)>P(\Om)$ by formula \eqref{perimeteru}.

\end{proof}

\section{Numerical algorithm}\label{secnumeric}
In this section, we show some numerical results regarding the problem
\begin{equation}\label{num1}
    \min\Big\{E(\Om)+\mu A(\Om), \Om\in \mathcal{C}, P(\Om)=2\pi \Big\}
\end{equation}
where $\mathcal{C}$ is defined by \eqref{bht5.0}
and we apply it to plot the convex hull of the Blaschke-Santal\'o diagram
(\ref{setE1}).

To solve the optimization problem (\ref{num1}) we choose to directly consider the optimality
conditions (\ref{ve0.1}) in term of the curvature $k(s)$. More precisely, we consider
the ODE
\begin{equation}\label{num2}
    \left\lbrace
    \begin{array}{rcl}
k''&=&-\frac{1}{2}\,k^3 - \lambda k + \mu  \\
k(0)&=&k_M\\
k'(0)&=&0
\end{array}
\right.
\end{equation}
where 
 $\lambda$ is the Lagrange multiplier
defined in Proposition 2.4, see (\ref{bht5.4}), and $k_M$ is the maximum value of the curvature.
The first step of the numerical procedure consists in evaluating these two parameters $\lambda$ and $k_M$.
The ODE (\ref{num2}) being valid only on the strictly convex parts of the boundary, we have to decide
whether we look for a strictly convex solution (without segments) or for a solution with segments. According
to Proposition \ref{segmu}, we know that we have segments on the boundary when $\mu$ is large enough.
It turns out that such segments appear numerically as soon as $\mu\geq 3.34...$, that is when the function $k$
vanishes before $s=\pi/2q$ ($q$ being the periodicity).

\smallskip
Let us explain in detail the procedure. First we notice that we can obtain an explicit formula for the solution to \eqref{num2} in term of the elliptic Jacobi function $\cn$ (see, for instance, \cite[chapter 16]{AbSt}).

\begin{Lemma}\label{lemmacn}
Assume $k\not\equiv 1$ and $\lambda\geq 0$. Then the solution of \eqref{num2} can be written as
\begin{equation}\label{num5-tak}
    k(s)=\frac{\alpha \cn(\omega s|\tau^2) +\beta}{\gamma \cn(\omega s|\tau^2) + 1}
\end{equation}
\end{Lemma}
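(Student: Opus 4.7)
My plan is to reduce the second-order ODE \eqref{num2} to a first-order ODE of the form $(k')^2 = P_4(k)$ where $P_4$ is a quartic, and then reduce this to the standard form of the Jacobi $\cn$ function via a Möbius substitution. This is the classical procedure for integrating ODEs whose first integrals involve a quartic polynomial under a square root.

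First I would multiply \eqref{num2} by $2k'$ and integrate; using the initial conditions $k(0)=k_M$, $k'(0)=0$ this yields
\begin{equation*}
(k')^2 = -\frac{1}{4}k^4 - \lambda k^2 + 2\mu k + 2\widetilde{C}, \qquad 2\widetilde{C} = \frac{1}{4}k_M^4 + \lambda k_M^2 - 2\mu k_M,
\end{equation*}
which is of the form $(k')^2 = \mathscr{Q}(k)$ with $\mathscr{Q}$ a quartic having $k_M$ as a (simple or double) root. I would then factor $\mathscr{Q}(k) = -\tfrac14 (k - k_M)(k - k_m)(k^2 + p k + q)$ where $k_m$ is a second real root and $k^2+pk+q$ carries the two remaining (possibly complex conjugate) roots, all determined explicitly by $\lambda$, $\mu$ and $k_M$.

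Next I would introduce the Möbius substitution
\begin{equation*}
k(s) = \frac{\alpha u + \beta}{\gamma u + 1}, \qquad u = \cn(\omega s \,|\, \tau^2),
\end{equation*}
and use the defining ODE $(u')^2 = (1-u^2)(1-\tau^2 + \tau^2 u^2)$ of the Jacobi $\cn$ function. Differentiating the ansatz gives
\begin{equation*}
k'(s) = \frac{(\alpha - \beta \gamma)\, u'(s)}{(\gamma u + 1)^2},
\end{equation*}
so $(k')^2$ becomes a rational function of $u$ whose numerator (once cleared of the denominator $(\gamma u+1)^4$) is a quartic in $u$. On the other hand, $\mathscr{Q}(k(s))$ with $k = (\alpha u+\beta)/(\gamma u+1)$ is also, after clearing denominators, a quartic in $u$. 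Matching the two quartics coefficient by coefficient gives five algebraic equations for the five unknowns $\alpha, \beta, \gamma, \omega, \tau^2$, plus the initial condition $k(0) = (\alpha+\beta)/(\gamma+1) = k_M$ (using $\cn(0|\tau^2) = 1$).

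The main obstacle will be showing that this nonlinear algebraic system is actually solvable in the regime $\lambda \geq 0$, $k \not\equiv 1$. Concretely, the right choice of the Möbius transformation must send two of the four roots of $\mathscr{Q}$ to $\pm 1$ (the zeros of $1-u^2$) so that the factorizations on both sides align; this is always possible since we have three free Möbius parameters. Once this normalization is fixed, $\omega$ is determined by the leading coefficient and $\tau^2$ by the remaining quadratic factor. The condition $k'(0)=0$ is automatic from $\cn'(0)=0$. One should also check the degenerate limits (coincident roots of $\mathscr{Q}$, i.e.\ when $k_m=k_M$), in which case the formula reduces to hyperbolic or trigonometric functions via $\tau\to 0$ or $\tau\to 1$; the hypothesis $k\not\equiv 1$ and the non-triviality of the trajectory ensure we are in the genuinely elliptic regime where \eqref{num5-tak} is meaningful.
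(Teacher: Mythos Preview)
Your approach is essentially the same as the paper's: integrate once to obtain $(k')^2=\mathscr{Q}(k)$ with $\mathscr{Q}$ quartic, then apply a real M\"obius change of variable to reduce to the defining ODE of $\cn$. The paper carries this out more constructively: rather than setting up and solving a $5\times 5$ coefficient-matching system, it introduces $\sigma=(k_M+k_m)/2$, $\delta=(k_M-k_m)/2$, picks $\gamma\in(-1,0)$ as a root of an explicit quadratic in these quantities, and then reads off $\alpha=\gamma\sigma+\delta$, $\beta=\gamma\delta+\sigma$, $\omega^2=\sigma\delta(\gamma^2-1)/(2\gamma)$ and $\tau^2=(\gamma^2+\delta\gamma/(2\sigma))/(\gamma^2-1)$, checking directly that $y=(k-\beta)/(\alpha-\gamma k)$ satisfies $(y')^2=\omega^2(1-y^2)(1-\tau^2+\tau^2 y^2)$.

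One point you should make explicit is where the hypothesis $\lambda\ge 0$ enters. Since $\mathscr{Q}''(z)=-3z^2-2\lambda\le 0$, the quartic is concave and therefore has \emph{exactly} two real roots $k_m<k_M$; the remaining pair is genuinely complex conjugate, not merely ``possibly'' so. This is what forces the reduced ODE to be of $\cn$-type (roots of $(1-y^2)(1-\tau^2+\tau^2 y^2)$ at $\pm 1$ and an imaginary pair) rather than $\sn$- or $\dn$-type, and is precisely what guarantees $\tau^2\in(0,1)$ and $\omega^2>0$ in the paper's formulas. Your matching argument would ultimately recover the same formulas, but you should state this root structure up front rather than leaving it as a solvability issue.
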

Let us note that this solution may not be the curvature of a domain in $\mathcal{C}$, since for large $s$ it may be negative.

\begin{proof}
For any given data $(k_M,\lambda)$ we integrate once
equation (\ref{num2}) to get
\begin{equation}\label{num3}
    (k')^2=- \frac{1}{4}\, k^4 - \lambda k^2 +2\mu k + C
\end{equation}
where $C= \frac{1}{4}\, k_M^4 + \lambda k_M^2 - 2\mu k_M$.
This shows that the solution of \eqref{num2} is global and bounded. Moreover it has a minimum value $k_m$ and thus the polynomial
$$
Q(z)= - \frac{1}{4}\, z^4 - \lambda z^2 +2\mu z + C
$$
has two real roots $k_M$ and $k_m$ and two conjugate non-real roots $z_0$ and $\overline{z_0}$.
We make a change of variables to transform \eqref{num3}. In order to do this we introduce
\begin{equation}\label{num3b-tak}
\sigma = \frac{k_M+k_m}{2},\ \delta= \frac{k_M-k_m}{2}.
\end{equation}
Using the relations between the coefficients and the roots of $Q$, we check that $\sigma, \delta>0$.

We can also verify that there exists a unique root $\gamma\in (-1,0)$ of
\begin{equation}\label{polygam}
X^2+\frac{1}{\sigma\delta}\,\left(3\sigma^2 + \delta^2+ 2\lambda \right) X +1 =0.
\end{equation}

We can then define
\begin{equation}\label{num3b}
\alpha = \gamma \sigma +\delta,\ \beta=\gamma\delta + \sigma
\end{equation}
and perform the change of variables
$$
k=\frac{\alpha y + \beta}{\gamma y + 1},
$$
{that is } $y=\frac{\beta-k}{\gamma k-\alpha}$.
Tedious calculation and \eqref{polygam}  yield that $y$ satisfies
\begin{equation}\label{num4}
    \left\lbrace
    \begin{array}{rcl}
  (y')^2&=&\omega^2(1-y^2)(1-\tau^2 +\tau^2 y^2)) \\
  y(0)&=&1\,.
\end{array}
\right.
\end{equation}
with
$$\omega^2=\sigma\delta \frac{\gamma^2 - 1}{2\gamma}>0\quad \tau^2= \frac{\gamma^2 +\frac{\delta\gamma}{2\sigma}}{\gamma^2-1}\in (0,1).$$

It is well-known (see, for instance, \cite[chapter 16]{AbSt}) that the solution of (\ref{num4}) is the Jacobian elliptic function $s\mapsto \cn(\omega s|\tau^2)$.
Therefore, we have obtained \eqref{num5-tak} with $\alpha, \beta, \gamma, \omega, \tau$ defined as above.
\end{proof}

According to Remark \ref{remPer} and Proposition \ref{P02} (see also Proposition \ref{axial2}), the curve is $2q$ periodic, for an integer $q$, $q\geq 1$ in the strictly convex case
and $1\leq q\leq 2$ when there are segments (see Theorem \ref{Thm0712}).
We will use two different algorithms when looking for strictly convex solutions, \textbf{case a)} and non-strictly convex solutions, \textbf{case b)}.

\medskip\noindent
\textbf{Case a):} {\it strictly convex solutions}:
We choose $q\geq 1$ and we try to find the parameters $k_M,\lambda$ such that the two following conditions are satisfied:
\begin{eqnarray}\label{num6}
  2 K(\tau^2) & = & \frac{\omega\pi}{2q} \quad\mbox{(periodicity)}\\
  \int_0^{\pi/2q} k(s)\,ds & = & \frac{\pi}{2q}  \quad\mbox{(in order to have $\theta(\pi/2q)=\pi/2q$)}
\end{eqnarray}
where $K$ is the complete Elliptic integral of the first kind
\begin{equation}\label{ellipticK}
K(m):=\int_{0}^{1} \frac{d t}{\sqrt{(1-t^2)(1-mt^2)}} \quad (m\in [0,1]).
\end{equation}
which defines the periodicity of the Jacobian
elliptic function $\cn$, see \cite[chapter 17]{AbSt}. This gives a $2\times 2$ non-linear system that can be solved by
using the Levenberg-Marquardt algorithm implemented in \verb?Matlab?. Then we compute the angle $\theta(s)$ by integrating $k(s)$
and the curve by
$$x(s)=\int_0^s \cos\theta(u)\,du,\quad y(s)=\int_0^s \sin\theta(u)\,du.$$
The elastic energy is computed by integrating (numerically) the curvature squared, while the
area of the domain is computed using (\ref{bht5.4}): $A(\Omega)=\frac{2\pi \lambda + E(\Omega)}{2\mu}$.

\medskip\noindent
\textbf{Case b):} {\it non-strictly convex solutions}:
Choose $q=1,2$ and consider the first zero of the function $k(s)$, named $s_1$. Hence  $s_1\le \pi/2q$. We
search numerically parameters $k_M,\lambda$ such that the two following conditions are satisfied:
\begin{eqnarray}\label{num6a}
  \int_0^{s_1} k(s)\,ds &=& \frac{\pi}{2q},\\ \label{612}  
  \frac{2}{\mu}\,\sqrt{C} + 2 s_1 &=& \frac{\pi}{q};\label{613}
\end{eqnarray}
notice that \eqref{612} guarantees $\theta(s_1)=\pi/2q$).
Indeed, according to Proposition \ref{P02} and equality (\ref{bht5.7b}), any segment have the same length $L$
which is related to the value of $k'(s_1^-)$ by $L=-\frac{2}{\mu}\,k'(s_1^-)$. Using equation (\ref{num3}) and the fact that $k(s_1)=0$
we see that $k'(s_1^-)=-\sqrt{C}$. Thus (\ref{613}) expresses the fact that the total
length of the curve has to be $2\pi$.
Then one can proceed following the same steps as above.

\medskip
This method gives the following results: it turns out that a value of $q$ greater than one is never competitive
(with respect to $q=1$),
neither in the case of strictly convex domains, nor in the case where segments appear; the disk remains the optimal
domain while $\mu\leq 3$, showing that Theorem \ref{theo-disk} is probably optimal. As already mentioned,
segments appear as soon as $\mu\geq 3.3425$.
Figure \ref{fig-shapes} shows three optimal domains obtained for $\mu=3.2, \mu=4$ and $\mu=8$.
\begin{figure}[h!]
\begin{center}
   \begin{minipage}[c]{.25\linewidth}
      \includegraphics[scale=0.4]{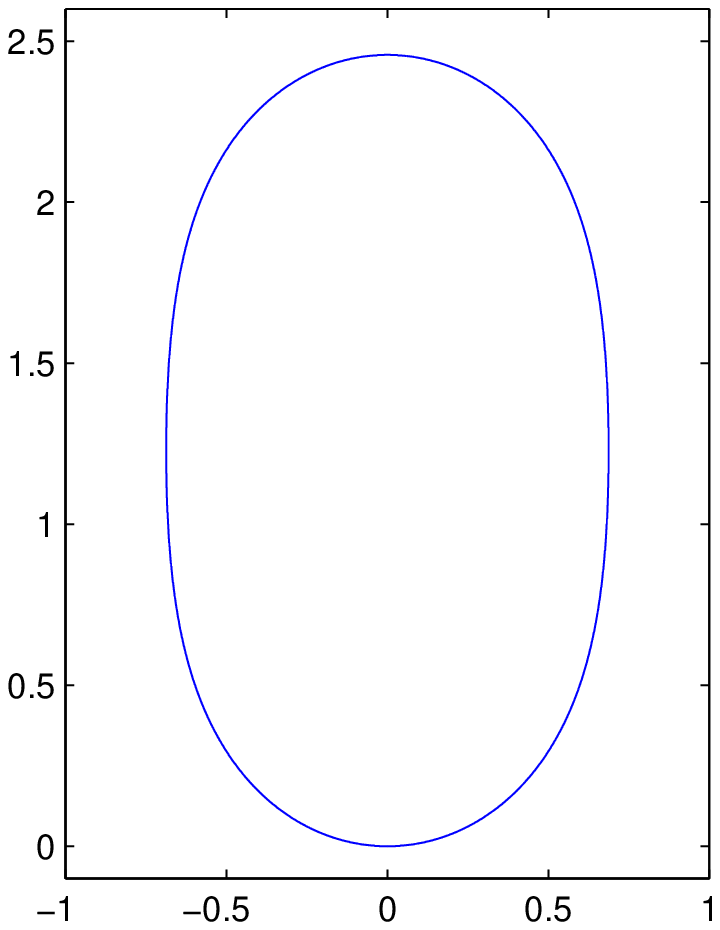}
   \end{minipage}
   \begin{minipage}[c]{.25\linewidth}
      \includegraphics[scale=0.4]{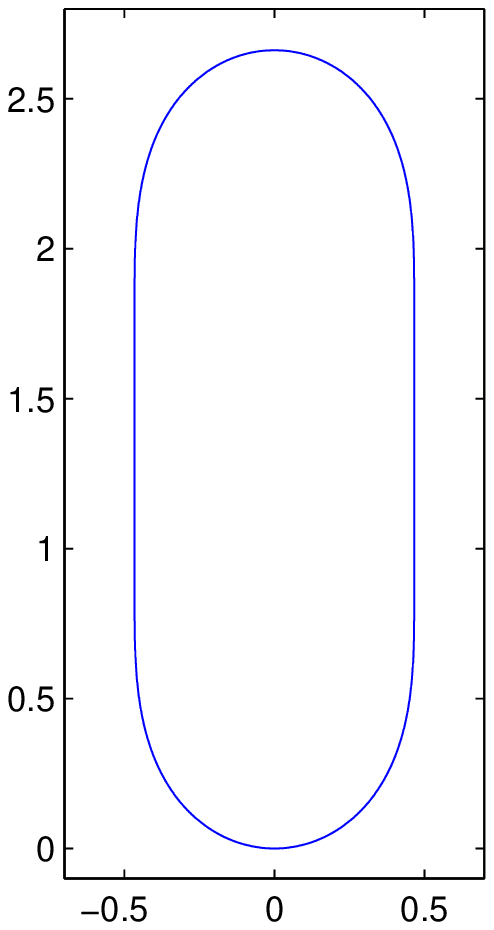}
   \end{minipage}
   \begin{minipage}[c]{.25\linewidth}
      \includegraphics[scale=0.4]{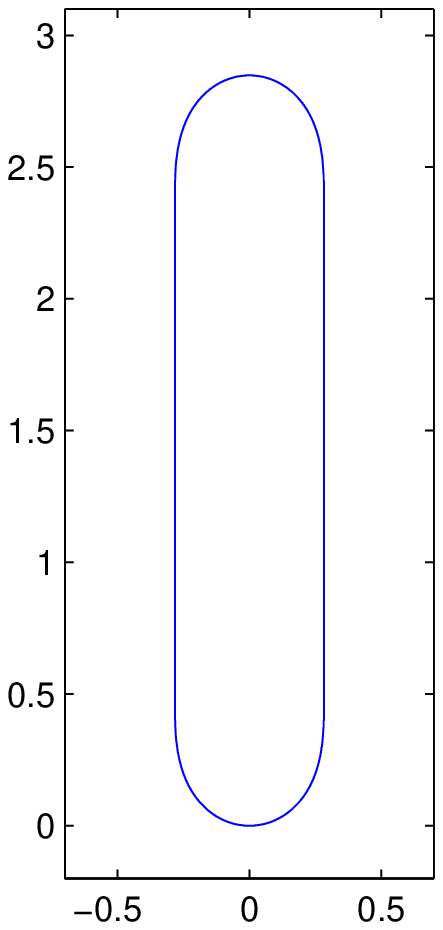}
   \end{minipage}
    \caption{Three optimal domains corresponding to $\mu=3.2, 4, 8$.}
       \label{fig-shapes}
    \end{center}
\end{figure}

\medskip
We are going to use the previous method to find optimal domains for any value $\mu$ starting at
$\mu=1$ in order to plot the unknown part of the boundary of the convex hull of the Blaschke-Santal\'o diagram $\mathcal{E}$, defined at (\ref{setE1}), which is contained in the half plane $\{x<1\}$.
This is equivalent to find the point(s) of $\mathcal{E}$ whose supporting line is parallel
to $y+\mu x =0$. Numerically, this process gives a unique continuous family of convex domains,
say $\Omega_\mu$, which tends to prove that the set $\mathcal{E}$ is indeed convex and can be plotted
this way. We show it in Figure \ref{figE}.
\begin{figure}[h!]
\begin{center}
      \includegraphics[scale=0.7]{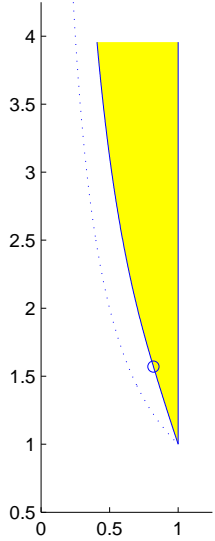}
    \caption{The Blaschke-Santalo diagram $\mathcal{E}$}
          \label{figE}
    \end{center}
\end{figure}

The lower point of $\mathcal{E}$ is obviously the disk whose coordinates are $(1,1)$ due to the normalization. 
The other point which appears on the boundary corresponds to the last strictly convex solution (obtained for $\mu=3.3425$). 
The dotted line is the hyperbola $yx=1$ which is the lower bound given by Gage's Theorem \ref{main-th1}. 
Actually, this is not the asymptotic hyperbola for the set $\mathcal{E}$. 
Next proposition makes the asymptotic behaviour of the set $\mathcal{E}$ on its left boundary more precise.
\begin{Proposition}
The hyperbola $yx=\rho^2/\pi^2$ is asymptotic to the set $\mathcal{E}$, where $\rho$ is given by
\begin{equation}\label{num8}
\rho=2\sqrt{2\pi}\left[2\FF\left(\frac{1}{2}\right) \,-K\left(\frac{1}{2}\right)\right]\,\simeq 4.2473
\end{equation}
where $K(\cdot)$ and $\FF(\cdot)$ are the complete Elliptic Integral of the first and the second kind, respectively:
$$
\FF\left(\frac{1}{2}\right)=\int_0^1 \sqrt{ \frac{1- t^2/2}{1-t^2} } \ dt, \quad K\left(\frac{1}{2}\right)=\int_{0}^{1} \frac{d t}{\sqrt{(1-t^2)(1-t^2/2)}} .
$$
More precisely, the elastic energy and the area of the optimal domains $\Omega_\mu$ behave, when $\mu \to +\infty$,
like
$$E(\Omega_\mu)\sim \rho \sqrt{\mu} \quad A(\Omega_\mu)\sim \rho/\sqrt{\mu}.$$
\end{Proposition}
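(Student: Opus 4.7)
The plan is to analyze the asymptotic shape of the minimizers $\Omega_\mu$ as $\mu\to\infty$ and to evaluate the limits of $E(\Omega_\mu)/\sqrt{\mu}$ and $\sqrt{\mu}\,A(\Omega_\mu)$. First, I would show that $\Omega_\mu$ degenerates to a ``bigon'' of length $\pi$. By Lemma \ref{Lem18:48}, $\mu A(\Omega_\mu) \le E(\Omega_\mu) + \mu A(\Omega_\mu) \le 3\pi\sqrt{\mu} - \pi$, so $A(\Omega_\mu) \to 0$; and by Proposition \ref{segmu} and Theorem \ref{Thm0712}, for $\mu$ large enough $\partial\Omega_\mu$ consists of exactly two parallel segments (of common length $L$) and two strictly convex caps, symmetric with respect to the centre of symmetry $Q$ (Theorem \ref{T01} and Proposition \ref{axial2}). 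Since $P(\Omega_\mu)=2\pi$ and $A(\Omega_\mu)\to 0$, the segments satisfy $L\to\pi$ and the ``width'' $2\lambda/\mu$ of the slab containing $\Omega_\mu$ tends to $0$; in particular $\lambda/\mu \to 0$, while $\lambda=(2\mu A-E)/(2\pi) = O(\sqrt{\mu})$.

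Next I would rescale $\tilde{k}(\tilde{s}) := k(s)/\sqrt{\mu}$ with $\tilde{s} = s\sqrt{\mu}$ on the strictly convex parts. The Euler--Lagrange equation \eqref{ve0.1} becomes
\begin{equation*}
\tilde{k}'' = -\tfrac{1}{2}\tilde{k}^3 - (\lambda/\mu)\tilde{k} + 1/\sqrt{\mu},
\end{equation*}
whose formal limit as $\mu\to\infty$ is $\tilde{k}_\infty'' = -\tfrac12 \tilde{k}_\infty^3$. A first integration with initial data $\tilde{k}_\infty(0) = \tilde{k}_M$, $\tilde{k}_\infty'(0) = 0$ yields $(\tilde{k}_\infty')^2 = (\tilde{k}_M^4 - \tilde{k}_\infty^4)/4$, solved explicitly by
\begin{equation*}
\tilde{k}_\infty(\tilde{s}) = \tilde{k}_M\, \cn\!\left(\tfrac{\tilde{k}_M}{\sqrt{2}}\,\tilde{s} \;\Big|\; \tfrac{1}{2}\right).
\end{equation*}
To determine $\tilde{k}_M$ I would apply \eqref{first-opt-geom} at the tip $M(s_M)$ of a cap, where the outward normal is aligned with $\overrightarrow{QM}$, giving $\|\overrightarrow{QM}(s_M)\| = \lambda/\mu + k_M^2/(2\mu)$. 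For a centrally symmetric convex body this quantity equals the half-diameter of $\Omega_\mu$, and is bounded between $L/2$ and $\pi/2$ (the latter from $P=2\pi$); since $L\to\pi$, it must converge to $\pi/2$. Combined with $\lambda/\mu\to 0$, this forces $k_M^2 \sim \pi\mu$, i.e., $\tilde{k}_M \to \sqrt{\pi}$.

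The elastic energy then follows by direct integration. Splitting $\partial\Omega_\mu$ into four congruent quarter-arcs, each of length $s_1 = \sqrt{2}\,K(1/2)/k_M$ in the limit, one has
\begin{equation*}
E(\Omega_\mu) = 2\int_0^{s_1} k^2(s)\,ds = \sqrt{\mu}\cdot 2\int_0^{\tilde{s}_1}\tilde{k}^2\,d\tilde{s}.
\end{equation*}
The change of variable $u = \tilde{k}_M\tilde{s}/\sqrt{2}$ and the identity $\int_0^{K(m)} \cn^2(u|m)\,du = (\FF(m) - (1-m)K(m))/m$ at $m=1/2$ produce $E(\Omega_\mu) \sim 2\sqrt{2}\,k_M[2\FF(1/2) - K(1/2)] \sim 2\sqrt{2\pi\mu}\,[2\FF(1/2) - K(1/2)] = \rho\sqrt{\mu}$. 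For the area I would use the Lagrange identity $\mu A(\Omega_\mu) = \pi\lambda + E(\Omega_\mu)/2$ coming from \eqref{bht5.4}, combined with the asymptotic rectangular geometry $A(\Omega_\mu) \sim L\cdot 2\lambda/\mu \sim 2\pi\lambda/\mu$: matching forces $\lambda \sim \rho\sqrt{\mu}/(2\pi)$ and hence $A(\Omega_\mu) \sim \rho/\sqrt{\mu}$. Together, $E(\Omega_\mu)A(\Omega_\mu) \to \rho^2$, so $x_\mu y_\mu = E(\Omega_\mu)A(\Omega_\mu)/\pi^2 \to \rho^2/\pi^2$.

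The main obstacle will be converting the formal convergence $\tilde{k}\to \tilde{k}_\infty$ into a rigorous one: I need uniform control of the perturbations $(\lambda/\mu)\tilde{k}$ and $\mu^{-1/2}$ on the bounded interval $[0,\tilde{s}_1]$ (which itself converges to $\sqrt{2/\pi}\,K(1/2)$), and compactness to ensure $\tilde{k}_M\to\sqrt{\pi}$ along any subsequence rather than a subsequential limit. A cleaner route is to plug the finite-$\mu$ representation of $k$ from Lemma \ref{lemmacn} into the two scalar conditions $2s_1+L=\pi$ and $\|\overrightarrow{QM}(s_M)\|= \lambda/\mu + k_M^2/(2\mu)$, then expand the resulting transcendental equations in powers of $1/\sqrt{\mu}$ to simultaneously extract $\tilde{k}_M\to\sqrt{\pi}$ and the leading asymptotics of $E$ and $A$.
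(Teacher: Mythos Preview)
Your proposal follows essentially the same line as the paper's proof. Two remarks:

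\smallskip
\emph{(i)} The paper obtains the a priori bounds on $E_\mu/\sqrt{\mu}$ and $\sqrt{\mu}A_\mu$ by combining Lemma~\ref{Lem18:48} with Lemma~\ref{lem1} (the segment inequality $E_\mu\le \mu A_\mu\le 2E_\mu$), which immediately gives $\pi/\sqrt{\mu}\le A_\mu\le 2\pi/\sqrt{\mu}$ and $2\pi\sqrt{\mu}/3\le E_\mu\le 3\pi\sqrt{\mu}/2$. You should invoke Lemma~\ref{lem1} explicitly; your argument that ``the width $2\lambda/\mu\to 0$'' is correct but the bound $\lambda=O(\sqrt{\mu})$ is what actually drives it, and that bound comes most cleanly from these two lemmas.

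\smallskip
\emph{(ii)} For the convergence $\tilde k\to\tilde k_\infty$, the paper takes exactly the ``cleaner route'' you propose at the end: it plugs the finite-$\mu$ representation \eqref{num5-tak} into the constraints and expands each of the parameters $\sigma,\delta,\gamma,\alpha,\beta,\omega,\tau$ of Lemma~\ref{lemmacn} in powers of $1/\sqrt{\mu}$, obtaining $\omega\sim\sqrt{\pi\mu/2}$, $\tau^2\to 1/2$, $\alpha\sim\sqrt{\pi\mu}$, $\beta=o(1)$, $\gamma\to 0$, hence $k(s)\sim\sqrt{\pi\mu}\,\cn(\omega s\mid 1/2)$. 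This bypasses the ODE-limit compactness issue you flagged. The subsequence argument (extracting limits $\rho_1,\rho_2$ and showing $\rho_1=\rho_2=\rho$) is then used to upgrade subsequential convergence to full convergence, exactly as in your step~11.
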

\begin{proof}
Let us denote by $E_\mu=E(\Omega_\mu)$ and $A_\mu=A(\Omega_\mu)$ the elastic energy and the area
of an optimal domain. For $\mu$ large, the boundary of $\Omega_\mu$ contains segments (according
to Proposition \ref{segmu}), therefore, following Lemma \ref{lem1}, $E_\mu \leq A_\mu \leq 2E_\mu$.
We plug these inequalities into (\ref{bht18:35}) to get
$$\frac{\pi}{\sqrt{\mu}} \leq A_\mu \leq \frac{2\pi}{\sqrt{\mu}} \quad \mbox{and}\quad
\frac{2\pi\sqrt{\mu}}{3} \leq E_\mu \leq \frac{3\pi\sqrt{\mu}}{2}\,.$$
This shows that $\sqrt{\mu}A_\mu$ and $E_\mu/\sqrt{\mu}$ are bounded. Therefore, there exist
$\rho_1\in [\pi,2\pi]$ and $\rho_2\in [2\pi/3,3\pi/2]$ such that, up to some subsequence,
$\sqrt{\mu}A_\mu\to \rho_1$ and $E_\mu/\sqrt{\mu}\to \rho_2$. By (\ref{bht5.4}) it follows
\begin{equation}\label{num9}
\lambda\sim \frac{2\rho_1-\rho_2}{2\pi}\,\sqrt{\mu}\ \mbox{when $\mu\to +\infty$}.
\end{equation}
Since $A_\mu$ goes to 0 when $\mu$ goes to $+\infty$, the optimal (convex) domain $\Omega_\mu$
converges to a segment of length $\pi$ (and the half-diameter converges to $\pi/2$).
Let us denote by $H_M$ the point on the boundary which
is at maximum distance of $Q$. According to (\ref{bht1.2}), it corresponds to the point with
maximal curvature $k_M$. Therefore, using (\ref{first-opt})
$$
\frac{\lambda}{\mu}+\frac{1}{2\mu}k_M^2=\langle\overrightarrow{QH_M},\nn\rangle=\left\|\overrightarrow{QH_M}\right\|\to \frac{\pi}{2}.
$$
But $\lambda/\mu\to 0$ by (\ref{num9}), therefore
\begin{equation}\label{num10}
k_M\sim \sqrt{\pi \mu}\ \mbox{when $\mu\to +\infty$}.
\end{equation}
We now use the notation and formula in the proof of Lemma \ref{lemmacn}. By (\ref{num3}) and (\ref{num10}), we have $C\sim \pi^2\mu^2/4$
while the length of the segments satisfies $L\to \pi$. The optimal domain $\Omega_\mu$
contains a rectangle of length $L$ and width $2\lambda/\mu$ plus a part that can be included in a rectangle of edges sizes $1/2(\diam(\Omega_\mu)-L)$ and
$2\lambda/\mu$. Since $\diam(\Omega_\mu)\to \pi$, this remaining part is of order $o\left(1/\sqrt{\mu}\right)$
Therefore, using (\ref{num9}) and the definition of $\rho_1$ we have
$A_\mu\sim \rho_1/\sqrt{\mu}$ together with $A_\mu\sim 2L \lambda/\mu\sim (2\rho_1-\rho_2)/\sqrt{\mu}$.
It follows that $\rho_1=\rho_2$. We will now denote by $\rho$ this common value.

Straightforward calculations now give (keeping the above notations):
$$k_Mk_m\sim -\pi\mu,\ k_M+k_m\sim \frac{4}{\pi},\ \sigma\sim \frac{2}{\pi},\ \delta\sim \sqrt{\pi\mu},$$
and
$$\gamma\sim \frac{-2}{\pi\sqrt{\pi\mu}},\ \alpha\sim \sqrt{\pi\mu},\ \beta=o(1),\ \omega^2\sim \frac{\pi\mu}{2},
\ \tau^2=\frac{1}{2}+o(1).$$
Therefore, at order 1, the curvature behaves like
\begin{equation}
k(s)\sim \sqrt{\pi\mu}\; \cn\left(\omega s \mid \frac{1}{2}\right)\ \ \mbox{with} \ \omega=\sqrt{\frac{\pi\mu}{2}}.
\end{equation}
and $s_1$ the first zero of $k(s)$ tends to 0 as $K\left(\frac{1}{2}\right)/\omega$ since $K\left(\frac{1}{2}\right)$
is the first zero of $\cn\left(s\mid\frac{1}{2}\right)$ (the definition of $K$ is recalled in \eqref{ellipticK}).
It follows that the elastic energy satisfies
$$E_\mu= 2\int_0^{s_1} k^2(s) ds \sim 2\pi\mu \int_0^{s_1} \cn^2\left(\omega s \mid \frac{1}{2}\right)\, ds \sim
2\sqrt{2\pi\mu} \int_0^{K(1/2)} \cn^2\left(t \mid \frac{1}{2}\right) \, dt\,.$$
Now using formulae in  \cite[17.2.11]{AbSt} for Elliptic integral of second kind,
$$
\int_0^{K(1/2)} \cn^2\left(t \mid \frac{1}{2}\right)\, dt = 2 \FF\left(K\left(\frac{1}{2}\right) \mid \frac 12 \right)- K\left(\frac{1}{2}\right),
$$
where
$$
\FF(u\mid m) := \int_0^{\sn\left(t \mid m \right)} \sqrt{ \frac{1-m t^2}{1-t^2} } \ dt.
$$
Using $\sn(K(0.5)=1$ and formulae \cite[17.3.3]{AbSt} give the desired result.
Finally, since the accumulation point for $\sqrt{\mu}A_\mu$ and $E_\mu/\sqrt{\mu}$ are unique,
both sequences converge to $\rho$.
\end{proof}

\section{Appendix}
Here below we present the proof of Lemma \ref{Lem-shape}.
For the shape derivative formulas of the area and of the perimeter, we refer, for instance, to \cite{HP}.
 
In order to derive $E(\Omega)$ with respect to the domain, we consider a parametrization of $\partial \Omega$:
$$
s\in [0,P(\Omega)] \mapsto (x(s),y(s)).
$$
Let us consider a variation of the domain of the form $\Omega_{\varepsilon}=\Omega +
\varepsilon V (\Omega)$, where $V$ is a smooth function. Then a parametrization of
$\partial \Omega_{\varepsilon}$ is
\begin{gather*}
x_{\varepsilon}(s) =  x(s) + \varepsilon V_1(x(s),y(s)),\\
y_{\varepsilon}(s) =  y(s) + \varepsilon V_2(x(s),y(s)),
\end{gather*}
so that
\begin{gather*}
x_{\varepsilon}'(s) =  x'(s) + \varepsilon \frac{d}{ds}V_1(x(s),y(s))=\cos \theta(s)+ \varepsilon \frac{d}{ds}V_1(x(s),y(s)),\\
y_{\varepsilon}'(s) =  y'(s) + \varepsilon \frac{d}{ds}V_2(x(s),y(s))=\sin \theta(s)+ \varepsilon \frac{d}{ds}V_2(x(s),y(s)),
\end{gather*}
and
\begin{gather*}
x_{\varepsilon}''(s) = -\theta'(s) \sin \theta(s)+ \varepsilon \frac{d^2}{ds^2}V_1(x(s),y(s)),\\
y_{\varepsilon}''(s) =  \theta'(s) \cos \theta(s)+ \varepsilon \frac{d^2}{ds^2}V_2(x(s),y(s)).
\end{gather*}
We notice that
$$
x_{\varepsilon}'(s)^2+y_{\varepsilon}'(s)^2=1 + 2\varepsilon \left(\cos \theta(s)\frac{d}{ds}V_1(x(s),y(s)) + \sin \theta(s)\frac{d}{ds}V_2(x(s),y(s)) \right)  + o(\varepsilon^2).
$$
Moreover,
\begin{multline*}
k_\varepsilon(s) = \left[-x_{\varepsilon}''(s)y_{\varepsilon}'(s) + y_{\varepsilon}''(s)x_{\varepsilon}'(s)\right]/\left[x_{\varepsilon}'(s)^2+y_{\varepsilon}'(s)^2 \right]^{3/2} \\
=\left[ \left(\theta'(s) \sin \theta(s)- \varepsilon \frac{d^2}{ds^2}V_1(x(s),y(s))\right) \left(  \sin \theta(s)+ \varepsilon \frac{d}{ds}V_2(x(s),y(s))\right)\right.\\
+\left.\left(\theta'(s) \cos \theta(s)+ \varepsilon \frac{d^2}{ds^2}V_2(x(s),y(s)) \right) \left( \cos \theta(s)+ \varepsilon \frac{d}{ds}V_1(x(s),y(s))\right)\right]\\
\times \left(1 -3\varepsilon \left(\cos \theta(s)\frac{d}{ds}V_1(x(s),y(s)) + \sin \theta(s)\frac{d}{ds}V_2(x(s),y(s)) \right)\right)  + o(\varepsilon^2)
\end{multline*}
which yields
\begin{multline*}
k_\varepsilon(s) = \theta'(s)+\varepsilon\left( \theta'(s) \sin \theta(s) \frac{d}{ds}V_2(x(s),y(s))- \sin \theta(s) \frac{d^2}{ds^2}V_1(x(s),y(s))\right.\\
+\left.\theta'(s) \cos \theta(s)\frac{d}{ds}V_1(x(s),y(s)) +  \frac{d^2}{ds^2}V_2(x(s),y(s))\cos \theta(s)
\right)\\
-3\varepsilon \left(\cos \theta(s)\theta'(s)\frac{d}{ds}V_1(x(s),y(s)) + \theta'(s) \sin \theta(s)\frac{d}{ds}V_2(x(s),y(s)) \right)+ o(\varepsilon^2)
\end{multline*}
and thus
\begin{multline*}
k_\varepsilon(s) = \theta'(s)+\varepsilon\left( -2 \theta'(s) \sin \theta(s) \frac{d}{ds}V_2(x(s),y(s))- \sin \theta(s) \frac{d^2}{ds^2}V_1(x(s),y(s))\right.\\
\left.-2 \theta'(s) \cos \theta(s)\frac{d}{ds}V_1(x(s),y(s)) +  \frac{d^2}{ds^2}V_2(x(s),y(s))\cos \theta(s)
\right)+ o(\varepsilon^2).
\end{multline*}

Consequently, we can write the elastic energy for the perturbation of $\Omega$:
\begin{multline*}
E(\Omega_{\varepsilon})=\frac{1}{2} \int_0^{2\pi} \left[\theta'(s)+\varepsilon\left( -2 \theta'(s) \sin \theta(s) \frac{d}{ds}V_2(x(s),y(s))- \sin \theta(s) \frac{d^2}{ds^2}V_1(x(s),y(s))\right.\right.\\
\left.\left.-2 \theta'(s) \cos \theta(s)\frac{d}{ds}V_1(x(s),y(s)) +  \frac{d^2}{ds^2}V_2(x(s),y(s))\cos \theta(s)
\right)
+ o(\varepsilon^2)\right]^2 \\
\times \left[1 + 2\varepsilon \left(\cos \theta(s)\frac{d}{ds}V_1(x(s),y(s)) + \sin \theta(s)\frac{d}{ds}V_2(x(s),y(s)) \right)  + o(\varepsilon^2)\right]^{1/2} \ ds
\end{multline*}
Thus
\begin{multline*}
E(\Omega_{\varepsilon})=\frac{1}{2} \int_0^{2\pi} \theta'(s)^2+2\varepsilon\left( -2 \theta'(s)^2 \sin \theta(s) \frac{d}{ds}V_2(x(s),y(s))- \theta'(s)\sin \theta(s) \frac{d^2}{ds^2}V_1(x(s),y(s))\right.\\
\left.-2 \theta'(s)^2 \cos \theta(s)\frac{d}{ds}V_1(x(s),y(s)) +  \frac{d^2}{ds^2}V_2(x(s),y(s))\cos \theta(s)\theta'(s)
\right)\\
+ \varepsilon \left(\cos \theta(s)\theta'(s)^2\frac{d}{ds}V_1(x(s),y(s)) + \sin \theta(s)
\theta'(s)^2\frac{d}{ds}V_2(x(s),y(s)) \right)  + o(\varepsilon^2)\ ds
\end{multline*}
Thus
\begin{multline}\label{eq01}
E(\Omega_{\varepsilon})=\frac{1}{2} \int_0^{2\pi} \theta'(s)^2+\varepsilon\left( -3
\theta'(s)^2 \sin \theta(s) \frac{d}{ds}V_2(x(s),y(s))- 2\theta'(s)\sin \theta(s) \frac{d^2}{ds^2}V_1(x(s),y(s))\right.\\
\left.-3 \theta'(s)^2 \cos \theta(s)\frac{d}{ds}V_1(x(s),y(s)) + 2 \frac{d^2}{ds^2}V_2(x(s),y(s))\cos \theta(s)\theta'(s)
\right)+ o(\varepsilon^2)\ ds.
\end{multline}

Now,
\begin{equation}\label{eq02}
\int_0^{2\pi} \theta'(s)^2 \sin \theta(s) \frac{d}{ds}V_2(x(s),y(s)) \ ds= - \int_0^{2\pi} \left(2\theta' \theta'' \sin \theta + (\theta')^3 \cos\theta\right)V_2(x,y) \ ds,
\end{equation}
\begin{equation}\label{eq03}
\int_0^{2\pi} \theta'(s)^2 \cos \theta(s) \frac{d}{ds}V_1(x(s),y(s)) \ ds= - \int_0^{2\pi} \left(2\theta' \theta'' \cos \theta - (\theta')^3 \sin\theta\right)V_1(x,y) \ ds,
\end{equation}
\begin{equation}\label{eq04}
\int_0^{2\pi} \theta'(s)\sin \theta(s) \frac{d^2}{ds^2}V_1(x(s),y(s)) \ ds=  \int_0^{2\pi} \left( \theta'''\sin(\theta)+3\theta''\theta'\cos\theta -(\theta')^3\sin \theta \right)V_1(x,y) \ ds,
\end{equation}
\begin{equation}\label{eq05}
\int_0^{2\pi} \theta'(s)\cos \theta(s) \frac{d^2}{ds^2}V_2(x(s),y(s)) \ ds=  \int_0^{2\pi} \left( \theta'''\cos(\theta)-3\theta''\theta'\sin\theta -(\theta')^3\cos \theta \right)V_2(x,y) \ ds.
\end{equation}

Gathering \eqref{eq01} and \eqref{eq02}--\eqref{eq05} yields
\begin{multline}\label{eq06}
\frac{d E(\Omega_{\varepsilon})}{d \varepsilon} |_{\varepsilon =0} =
\frac 12\int_0^{2\pi}  \left(3 \left(2\theta' \theta'' \sin \theta + (\theta')^3 \cos\theta\right)V_2(x,y)
+3\left(2\theta' \theta'' \cos \theta - (\theta')^3 \sin\theta\right)V_1(x,y)
\right. \\ \left.
-2 \left( \theta'''\sin(\theta)+3\theta''\theta'\cos\theta -(\theta')^3\sin \theta \right)V_1(x,y)
+2\left( \theta'''\cos(\theta)-3\theta''\theta'\sin\theta -(\theta')^3\cos \theta \right)V_2(x,y)
\right) \ ds.
\end{multline}
Since the normal to $\Omega$ is $\nn=(\sin \theta, -\cos \theta)$, we deduce from \eqref{eq06} that
\begin{equation}\label{eq07}
\frac{d E(\Omega_{\varepsilon})}{d \varepsilon} |_{\varepsilon =0} =
-\int_0^{2\pi}  \left( \frac 12(\theta')^3 + \theta''' \right) \langle V, \nn\rangle \ ds.
\end{equation}

\section*{Acknowledgement}

This paper started while Chiara Bianchini was supported by the INRIA  research group {\it Contr\^ole robuste infini-dimensionnel et applications (CORIDA)} as post-doc. She is also supported by the Gruppo Nazionale per l'Analisi Matematica, la Probabilit\`a e le loro Applicazioni (GNAMPA) of the Istituto Nazionale di Alta Matematica (INdAM).

The work of Antoine Henrot and Tak\'eo Takahashi is supported by the project ANR-12-BS01-0007-01-OPTIFORM {\it Optimisation de formes} financed by the French Agence Nationale de la Recherche (ANR).

The work of Takeo Takahashi  is  also part of the INRIA project {\it Contr\^ole robuste infini-dimensionnel et applications (CORIDA)}.

The three authors had been supported by the  GNAMPA project  2013 {\it Metodi analitico geometrici per l'ottimizzazione di energie elastiche} in their visiting. 



\begin{thebibliography}{10}
\bibitem{AbSt} {\sc M. Abramowitz, I.A. Stegun}, Handbook of Mathematical Functions, NBS Applied Math. Series, 1972.

\bibitem{and} {\sc B. Andrews}, {\it Classification of limiting shapes for isotropic curve flows},
J. Amer. Math. Soc., {\bf vol 16}, no 2 (2002), 443-459.

\bibitem{bla} {\sc W. Blaschke}, {\it Eine Frage \"uber Konvexe K\"orper},
Jahresber. Deutsch. Math. Ver., {\bf 25} (1916), 121-125.

\bibitem{Gage} {\sc M.E. Gage}, {\it An isoperimetric inequality with applications to curve
              shortening}, {Duke Math. J.}, {\bf 50} no 4 (1983), pp. 1225-1229.

\bibitem{Gre-Osh}{\sc M. Green, S. Osher}, 
{\it Steiner polynomials, Wulff flows, and some new isoperimetric inequalities for convex plane curves},
{Asian J. Math.},
{\bf 3},
(1999),
no 3,
pp. 659--676.

\bibitem{Gru} {\sc P.M. Gruber}, {\it The space of convex bodies},
Handbook of convex geometry, P.M. Gruber and J.M. Wills eds,
Elsevier 1993, pp. 301-318.

\bibitem{HP} {\sc A. Henrot, M. Pierre}, Variation et Optimisation de
forme, une analyse g\'eom\'etrique, Math\'ematiques et Applications,
vol. {\bf 48}, Springer 2005.


\bibitem{LNP} {\sc  J. Lamboley, A. Novruzi, M. Pierre}, {\it Regularity and singularities of optimal convex shapes
in the plane}, Archive for Rational Mechanics and Analysis 205, 1 (2012) 311-343 .

\bibitem{MaurerZowe} {\sc H. Maurer, J. Zowe} First and second order necessary and sufficient optimality conditions
for infinite-dimensional programming problems. Mathematical Programming 16, 1979.

 \bibitem{Sat} {\sc Yu. L. Sachkov}, Maxwell strata in the Euler elastic problem,
 {\it J. of Dynamical and Control Systems}, {\bf vol. 14}, no 2 (2008), 169-234.

\bibitem{Sat2} {\sc Yu. L. Sachkov}, Closed Euler elasticae, preprint,
www.botik.ru/PSI/CPRC/sachkov/el\_closed.pdf

\bibitem{san} {\sc L. Santal\'o}, {\it Sobre los sistemas completos de desigualdades entre
tres elementos de una figura convexa plana}, Math. Notae, {\bf 17} (1961), 82-104.

\bibitem{schneider} {\sc R. Schneider}, Convex bodies: the Brunn-Minkowski Theory,
Encyclopedia of Mathematics and its applications, Cambridge
University Press 1993.

\bibitem{bir} {\sc Shravan K. Veerapaneni, Ritwik Raj, George Biros, Prashant K. Purohit},
Analytical and numerical solutions for shapes of quiescent two-dimensional vesicles,
{\it International Journal of Non-Linear Mechanics},
{\bf vol. 44}, Issue 3 (2009), 257-262.
\end{thebibliography}
\end{document}